\newtheorem{theorem}{Theorem}[section]
\newtheorem{lemma}[theorem]{Lemma}
\newtheorem{corollary}[theorem]{Corollary}
\newtheorem{proposition}[theorem]{Proposition}
\newtheorem{proposition-definition}[theorem]{Proposition-Definition}
\newtheorem{example-proposition}[theorem]{Example-Proposition}
\newtheorem{question}[theorem]{Question}
\theoremstyle{definition}
\newtheorem{definition}[theorem]{Definition}
\newtheorem{notation}[theorem]{Notation}
\newtheorem{remark}[theorem]{Remark}
\newtheorem*{ack}{Acknowledgements}
\title{Commutativity of Quantization and Reduction for Quiver Representations}
\author{Hu Zhao}
\address{Department of Mathematics, Sichuan University, Chengdu, 
Sichuan Province 610064 P.R. China}
\email{zhaohu130@126.com}
\date{}
\begin{document}

\maketitle

\begin{abstract}
Given a finite quiver, its double may be viewed as its
non-commutative ``cotangent" space, and hence is a non-commutative
symplectic space. 
Crawley-Boevey, Etingof and Ginzburg constructed
the non-commutative reduction
of this space while Schedler constructed
its quantization. 
We show that the non-commutative
quantization and reduction commute with each other. Via
the quantum and classical trace maps, such a commutativity
induces the commutativity of the quantization
and reduction on the space of quiver representations.

\noindent{\bf Keywords:} necklace Lie algebra,
quiver variety, quantization, reduction, differential operator.

\noindent{\bf MSC 2020:} 16G20, 53D55, 81R60.
\end{abstract}

\setcounter{tocdepth}{1}
\tableofcontents

\section{Introduction}

In 2000 Kontsevich and Rosenberg proposed in \cite{KonRos2000} a heuristic
principle in the study of non-commutative geometry. It says that a non-commutative
geometric structure on a non-commutative space (in this article we mean an associative
algebra), if it exists, should induce its classical counterpart on its representation schemes.
This principle has achieved great success in the study of non-commutative Poisson geometry
(\cite{Cra2011,Van2008Double}) and non-commutative
symplectic geometry 
(\cite{BocLeB2000,CBEG2007,Gin2001}). 
The purpose of this paper
is to study the ``quantization commutes with reduction" problem
for quiver varieties, with the guidance of the Kontsevich-Rosenberg principle.

In 1980s Guillemin and Sternberg conjectured in \cite{GuiSte}
that for a symplectic manifold, 
its geometric quantization commutes with its reduction
and proved this conjecture for the case of compact K\"ahler manifolds.
In 1990s
Fedosov proved  in \cite{Fed1998}
that the deformation quantization commutes with reduction
for symplectic manifolds. 
At about the same time, Kontsevich, inspired by the mirror symmetry
from physics, initiated the study of ``noncommutative" symplectic
geometry (see \cite{Kontsevich,Kon94}).
The above problem of ``deformation quantization commutes with reduction" 
makes sense
for noncommutative symplectic manifolds, too, as mathematicians
have made much progress in this direction.
In what follows, we focus on the case of quiver algebras, which
already have
ample noncommutative symplectic/Poisson structures.

Let $Q$ be a finite quiver. Let $\overline Q$ be the double of $Q$; that is,
the quiver obtained from $Q$ by adding a new edge for each edge of $Q$ but with direction
reversed. 
According to Kontsevich \cite{Kontsevich} 
(see also \cite{CBEG2007,Gin2001}), $\overline Q$ is the ``non-commutative" cotangent
space of $Q$, and hence is a ``non-commutative" symplectic space.
Denote by $\mathrm{Rep}(\overline Q, \mathbf d)$ 
and $\mathcal M_{\mathbf d}(Q)$ the space of $\mathbf d$-dimensional
quiver representations of $\overline Q$ and the 
associated quiver variety, where the former
is a symplectic space and the latter is a Hamiltonian reduction of 
the former. We have the following results:

\vspace{2mm}
\noindent (1) Holland in \cite{Hol1999} showed that
for $\mathrm{Rep}(\overline Q, \mathbf d)$ and $\mathcal M_{\mathbf d}(Q)$,
their quantizations commute with the reduction procedure in the sense
of Fedosov.

\vspace{2mm}
\noindent (2) Ginzburg in \cite{Gin2001}, and simultaneously, Bocklandt-Le Bruyn in \cite{BocLeB2000}, 
showed that there is a Lie algebra structure on $(\mathbb{K}\overline{Q})_{\natural}$, 
called the {\it necklace Lie algebra}, 
such that the canonical trace map 
$$\mathrm{Tr}: (\mathbb{K}\overline Q)_\natural \to \mathbb{K}[\mathrm{Rep}(\overline Q,\mathbf{d})],\quad
\overline a\mapsto \{\rho\mapsto\mathrm{trace}(\rho(a))\}$$
is a map of Lie algebras, where the Lie bracket on the latter is the Poisson bracket. 

\vspace{2mm}
\noindent (3) Later Schedler constructed in \cite{Sch2005} 
a quantization of $(\mathbb{K}\overline{Q})_{\natural}$; 
such a quantization, denoted by $\mathbf N(Q)_\hbar$,
under the {\it quantum} trace map, is mapped to the differential operators 
on $\mathrm{Rep}(\overline Q,\mathbf d)$,
and hence gives a quantization of the latter. 

\vspace{2mm}
\noindent (4) Crawley-Boevey, Etingof and Ginzburg 
showed in 
\cite{CBEG2007} that $\mathbb{K}\overline Q$ in fact has a {\it bi-symplectic} structure,
which naturally induces the symplectic structure on $\mathrm{Rep}(\overline Q,\mathbf d)$,
and thus
makes the ``non-commutative symplectic structure" more precise. In loc cit they
also introduced the procedure of non-commutative Hamiltonian reduction for 
bi-symplectic spaces; for the quiver algebra $\mathbb{K}\overline{Q}$, the non-commutative 
reduction is nothing but a preprojective algebra, which denote by $\Pi Q$.

\vspace{2mm}
Based on these results,  we have the following 
two natural questions:

\begin{question}\label{questioninintro}
$(1)$ Do we have the non-commutative version of ``quantization commutes with reduction"
for bi-symplectic spaces?

$(2)$ Does the non-commutative ``quantization commutes with reduction" fit the Kontsevich-Rosenberg
principle, that is, does it induce the classical one on the
corresponding representation spaces?
\end{question}

In this paper, we try to give an answer to these two questions for quiver algebras. 
Our main theorem is:

\begin{theorem}\label{maintheorem}
Suppose $Q$ is a finite quiver, $\mathbf{d}$ is a dimension vector such that 
moment map $\mu$ is a flat morphism. Then:

$(1)$ \textup{(Noncommutative quantization commutes with reduction)}
There exists a non-commutative reduction 
$\mathcal{R}_{q}(\mathbf{N}(Q)_{\hbar},\widehat{\mathbf{w}})$ of $\mathbf{N}(Q)_{\hbar}$, 
which quantizes preprojective algebra $\Pi Q$. Moreover, the following diagram
\begin{equation}
\begin{split}
\xymatrixcolsep{4pc}
\xymatrix{
\mathbf{N}(Q)_{\hbar} \ar@{-->}[r] 
& \mathcal{R}_{q}(\mathbf{N}(Q)_{\hbar},\widehat{\mathbf{w}})\\
(\mathbb{K}\overline Q)_\natural \ar@{~>}[u] \ar@{-->}[r]&(\Pi Q)_\natural
\ar@{~>}[u]}
\end{split}
\end{equation}
commutes. 
		
$(2)$ \textup{(Existence of quantum trace)}
There is a quantum trace map $\mathrm{Tr^{q}}$, which maps 
$\mathcal{R}_{q}(\mathbf{N}(Q)_{\hbar},\widehat{\mathbf{w}})$ 
to the differential operators on
the quiver variety $\mathcal M_{\mathbf d}(Q)$, such that the following diagram
\begin{equation}\begin{split}
\xymatrixcolsep{4pc}
\xymatrix{
\mathbf{N}(Q)_{\hbar} \ar[r]^-{\mathrm{Tr}^{q}} \ar@{-->}[d]
& 	{\mathcal{D}_{\hbar}(\mathrm{Rep}(Q,\mathbf{d}))} \ar@{-->}[d]\\
\mathcal{R}_{q}(\mathbf{N}(Q)_{\hbar}, \widehat{\mathbf{w}}) \ar[r]^-{\mathrm{Tr}^{q}}& 
\displaystyle\frac{(\mathcal{D}_{\hbar}
(\mathrm{Rep}(Q,\mathbf{d})))^{\mathrm{GL}_{d}}}{( \mathcal{D}_{\hbar}
(\mathrm{Rep}(Q,\mathbf{d}))\ (\tau-\hbar \chi_{0}) (\mathfrak{gl}_{\mathbf{d}}))^{\mathrm{GL}_{d}}}		
}
\end{split}\end{equation}
commutes, where $\mathcal D_{\hbar}(-)$ are the differential operators
on the corresponding spaces.
\end{theorem}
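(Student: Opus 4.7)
My plan is to establish Part (1) by constructing an explicit quantum moment element $\widehat{\mathbf{w}} \in \mathbf{N}(Q)_\hbar$ that lifts the classical necklace class $\mathbf{w} := \sum_{a \in Q_1}[a, a^*] \in (\mathbb{K}\overline{Q})_\natural$ whose vanishing cuts out the preprojective algebra $\Pi Q$ (by Crawley-Boevey–Etingof–Ginzburg). Since Schedler's algebra $\mathbf{N}(Q)_\hbar$ is a PBW-type deformation of the necklace Lie algebra, one can select $\widehat{\mathbf{w}}$ as a canonical symmetrized lift of $\mathbf{w}$; the noncommutative reduction $\mathcal{R}_q(\mathbf{N}(Q)_\hbar, \widehat{\mathbf{w}})$ is then defined, in analogy with the classical construction, as the centralizer of $\widehat{\mathbf{w}}$ modulo the two-sided ideal it generates there. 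Commutativity of the first diagram would follow by computing the $\hbar \to 0$ limit of this quotient and matching its defining relations with those of $(\Pi Q)_\natural$, using the fact that the semiclassical bracket of $\widehat{\mathbf{w}}$ recovers the Hamiltonian vector field associated to $\mathbf{w}$ for the necklace Lie algebra.

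For Part (2), the approach is to unwind Schedler's quantum trace $\mathrm{Tr}^q \colon \mathbf{N}(Q)_\hbar \to \mathcal{D}_\hbar(\mathrm{Rep}(Q, \mathbf{d}))$ and compute the image of $\widehat{\mathbf{w}}$. The key claim I would prove is that $\mathrm{Tr}^q(\widehat{\mathbf{w}})$ agrees with the standard quantum moment map $\tau - \hbar \chi_0 \colon \mathfrak{gl}_{\mathbf{d}} \to \mathcal{D}_\hbar(\mathrm{Rep}(Q, \mathbf{d}))$ used by Holland in his quantization-reduction theorem, up to the \emph{Duflo-type} correction encoded in $-\hbar \chi_0$. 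Granting this identification, the commutativity of the second diagram follows by a universal property argument: the top arrow $\mathrm{Tr}^q$ sends the ideal generated by $\widehat{\mathbf{w}}$ into the ideal $\mathcal{D}_\hbar(\mathrm{Rep}(Q, \mathbf{d})) \cdot (\tau - \hbar\chi_0)(\mathfrak{gl}_{\mathbf{d}})$ and lands in the $\mathrm{GL}_{\mathbf{d}}$-invariants, hence descends to the required map between quotients. Holland's result then guarantees that the bottom right-hand side genuinely quantizes $\mathcal{M}_{\mathbf{d}}(Q)$.

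The principal obstacle, in my view, is the matching of orderings and quantum corrections on the two sides. At the classical level, the trace of $\mathbf{w}$ is literally the moment map and the verification is automatic from the Ginzburg/Bocklandt–Le Bruyn theorem; at the quantum level, however, Schedler's trace is built from explicit normal-ordered sums of cycles, and comparing these against Holland's quantum moment map requires a delicate combinatorial computation tracking a Duflo-type shift on PBW monomials. The flatness hypothesis on $\mu$ enters here in an essential way: it guarantees that the quantum reduction $\mathcal{R}_q(\mathbf{N}(Q)_\hbar, \widehat{\mathbf{w}})$ is itself flat over $\mathbb{K}[\hbar]$, so that its associated graded is the classical reduction and no anomalies appear under specialization. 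Without flatness, the ideal generated by $\widehat{\mathbf{w}}$ could be strictly larger than the $\hbar$-adic deformation of $\langle \mathbf{w} \rangle$, destroying the square's commutativity. Once these compatibilities are verified, the overall statement falls out as the interplay between Schedler's quantum trace, Holland's quantum moment map, and the noncommutative bi-symplectic reduction of Crawley-Boevey–Etingof–Ginzburg, confirming the Kontsevich–Rosenberg heuristic in this setting.
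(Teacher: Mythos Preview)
Your overall strategy for Part~(2) is close to the paper's: compute the image of the quantum moment element under Schedler's $\mathrm{Tr}^q$, identify it with Holland's $(\tau-\hbar\chi_0)$, and then argue that $\mathrm{Tr}^q$ carries the relevant ideal into the relevant ideal so that the map descends. That part is fine in outline.

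The genuine gap is in your construction of $\mathcal{R}_q(\mathbf{N}(Q)_\hbar,\widehat{\mathbf w})$ for Part~(1). You propose to take the centralizer of the single element $\widehat{\mathbf w}$ and mod out by the two-sided ideal that $\widehat{\mathbf w}$ alone generates. Neither step matches what is actually needed. First, there is no ``invariants/centralizer'' step on the noncommutative side; the reduction is a plain two-sided quotient of $\mathbf{N}(Q)_\hbar$. Second, and more seriously, the ideal generated by $\widehat{\mathbf w}$ alone is too small. The preprojective relations in $(\Pi Q)_\natural$ come from the image of the entire subspace $(\mathbb{K}\overline{Q}\,\mathbf w)_\natural \subset (\mathbb{K}\overline{Q})_\natural$, i.e.\ from \emph{every} necklace containing $\mathbf w$ as a subword, not just the necklace $\mathbf w$ itself. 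Since $(\mathbb{K}\overline{Q})_\natural$ is only a Lie algebra, killing $\mathbf w$ does not force $y\mathbf w$ to vanish for an arbitrary path $y$. Accordingly, the paper defines
\[
\mathcal{R}_q(\mathbf{N}(Q)_\hbar,\widehat{\mathbf w})
\;=\;
\mathbf{N}(Q)_\hbar \big/ \mathbf{N}(Q)_\hbar\,\widehat{(\mathbb{K}\overline{Q}\,\mathbf w)_\natural}\,\mathbf{N}(Q)_\hbar,
\]
quotienting by the two-sided ideal generated by lifts of \emph{all} such necklaces. With your smaller ideal the $\hbar\to 0$ specialization would not recover $(\Pi Q)_\natural$, and correspondingly in Part~(2) you would not be able to show that $\mathrm{Tr}^q$ carries your ideal into $\big(\mathcal{D}_\hbar(\mathrm{Rep}(Q,\mathbf d))\,(\tau-\hbar\chi_0)(\mathfrak{gl}_{\mathbf d})\big)^{\mathrm{GL}_{\mathbf d}}$: the paper's Lemma establishing this inclusion has to treat $\widehat{x_1\cdots x_r\,\mathbf w}$ for an arbitrary prefix $x_1\cdots x_r$, which is exactly how the character $\chi_0$ is pinned down.

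A secondary point: you locate the flatness hypothesis on $\mu$ in Part~(1), as what guarantees $\mathcal{R}_q$ is $\mathbb{K}[\hbar]$-flat. In fact the paper proves the noncommutative square in Part~(1) by a direct PBW argument (an explicit isomorphism $\mathcal U((\Pi Q)_\natural)\cong \mathcal{R}_q/\hbar\mathcal{R}_q$) that uses no flatness of $\mu$ at all. Flatness enters only in Part~(2), on the differential-operator side, through Holland's theorem that the quantum Hamiltonian reduction of $\mathcal D_\hbar(\mathrm{Rep}(Q,\mathbf d))$ quantizes $\mathbb{K}[\mathcal M_{\mathbf d}(Q)]$.
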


In the above theorem, the curved arrow means quantization, 
the dotted arrow means the procedure of reduction, and the
``commutativity" of the diagram is understood as in Fedosov \cite[Theorem 3]{Fed1998}.
Thus combining the above theorem with the above cited works, we in fact get the following
commutative diagram
\begin{equation}\label{maincor}
\begin{split}
\xymatrixrowsep{0.8pc}
\xymatrixcolsep{1.2pc}
\xymatrix{
\mathbf{N}Q_{\hbar}\ar@{-->}[rd] \ar[rr]^-{\mathrm{Tr}^{q}}  && 
{\mathcal{D}_{\hbar}(\mathrm{Rep}(\overline Q,\mathbf{d}))} \ar@{-->}[rd] \\
&\mathcal{R}_{q}(\mathbf{N}(Q)_{\hbar}, \widehat{\mathbf{w}}) \ar[rr]^{\mathrm{Tr}^{q}}  
&& 
\displaystyle\frac{(\mathcal{D}_{\hbar}
(\mathrm{Rep}(Q,\mathbf{d})))^{\mathrm{GL}_{d}}}{( \mathcal{D}_{\hbar}
(\mathrm{Rep}(Q,\mathbf{d}))\ (\tau-\hbar \chi_{0}) (\mathfrak{gl}_{\mathbf{d}}))^{\mathrm{GL}_{d}}} \\
(\mathbb{K}\overline{Q})_{\natural} \ar@{~>}[uu] \ar@{-->}[rd] \ar[rr]^{\mathrm{Tr}}&& 
\mathbb{K}[\mathrm{Rep}(Q,\mathbf{d})] \ar@{~>}[uu] \ar@{-->}[rd] \\
&(\Pi Q)_{\natural} \ar@{~>}[uu] \ar[rr]^{\mathrm{Tr}} 
&& \mathbb{K}[\mathcal{M} _{\mathbf{d}}(Q)],\ar@{~>}[uu]
}
\end{split}
\end{equation}
This diagram exactly says that ``quantization commutes with reduction" fits the 
Kontsevich-Rosenberg principle,
and hence give an affirmative answer to Question \ref{questioninintro}.

The rest of the paper is devoted to the proof of the above theorem. It is organized as follows.
In \S\ref{sect:NCQR} we recall the bi-symplectic structure
introduced by Crawley-Boevey, Etingof and Ginzburg;
in \S\ref{sect:NCredandquant}
we show that that the non-commutative quantization commutes
with the non-commutative reduction for quiver algebras;
in \S\ref{sect:QR} 
we collect the results 
on the commutativity of quantization and reduction
for quiver representations, which is mainly due to Holland;
in \S\ref{sect:fromNC} we show that under the quantum and the classical trace maps,
the non-commutative version of ``quantization commutes with reduction"
induces the usual one on quiver representation spaces, and hence proves Theorem
\ref{maintheorem}.

\begin{ack}
The author would like to thank his advisor Professor
Xiaojun Chen as well as Professor Farkhod Eshmatov 
and Jieheng Zeng for help conversations; he also thanks Professor
Yongbin Ruan
for inviting him to visit IASM, Zhejiang University during the preparation
of the paper.
This work is partially supported by NSFC (Nos. 11890660 and 11890663).
\end{ack}

\section{Non-commutative bi-symplectic spaces}\label{sect:NCQR}

Crawley-Boevey, Etingof and Ginzburg introduced in \cite{CBEG2007}
a version of non-commutative symplectic spaces, which
they called {\it bi-symplectic spaces} and studied 
their non-commutative Hamiltonian reduction.
They also showed, as an important example, that the double of a quiver
is bi-symplectic.
In this section we briefly go over their results.

\subsection{Bi-symplectic spaces}
 
In this paper, $\mathbb{K}$ is an algebraically closed field of characteristic zero,
$R$ is the commutative 
semisimple $\mathbb{K}$-algebra $R=\oplus_{i=1} ^{n} \mathbb{K}e_{i}$ with
$e_ie_j=\delta_{ij} e_i$.  
Let $A$ be an $R$-algebra and $\mathbf{m}: A\otimes_{R} A \rightarrow A$ 
be the multiplication of $A$. 

\begin{definition}
Let $A$ be an $R$-algebra. The set of {\it non-commutative 
1-forms} of $A$ is 
$\Omega ^{1} _{R}A := \ker \mathbf{m}$.
\end{definition}

Equivalently, $\Omega^{1}_{R} A$ is the $A$-bimodule generated by 
$da$ for $a \in A$, 
subject to the following relations:
$$d(ab) = (da)b + a (db),\ \text{for any } a,\ b \in A.$$
Here, $d$ is considered as an $R$-linear map from $A$ to $\Omega^{1}_{R} A$.
 
\begin{definition}
An $R$-algebra $A$ is called {\it smooth} if it is finitely generated as an $R$-algebra and 
$\Omega^{1}_{R}A$ is projective as an $A$-bimodule.	
\end{definition}

From now on, all algebras are assumed to be smooth.

\begin{definition}
Let  $A$ be an $R$-algebra. The set of {\it non-commutative differential forms} 
of $A$,
denoted by $\Omega^{\bullet}_{R} A$, is 
the tensor algebra $T_{A} (\Omega^{1}_{R}A) = \oplus_{n \geq 0} T^{n}_{A}(\Omega_{R}^{1}A)$ 
equipped with differential $d : \Omega^{\bullet-1}_{R}A \rightarrow \Omega^{\bullet}_{R} A$, 
which is the extension of $d: A \rightarrow \Omega^{1}_{R}A$ by derivation and by setting
$d^2=0$.
\end{definition}

In practice, we write an $n$-form in the form $a_{0} da_{1} \cdots  da_{n}$ for $a_{i} \in A$.
   
\begin{definition}
Let $A$ be an $R$-algebra, and let $\Omega_{R}^{\bullet}A$ be 
the non-commutative differential forms of $A$. 
Then the {\it Karoubi-de Rham
complex} of $A$ over $R$ is the complex
\begin{displaymath}
\mathrm{DR}^{\bullet}_{R}A := \Omega _{R}^{\bullet}A/[\Omega_{R}^{\bullet}A , \Omega_{R}^{\bullet}A]_{s}
\end{displaymath}
with the differential induced from $\Omega _{R}^{\bullet}A$.
\end{definition}

Here $[-,-]_{s}$ means taking
the super commutators: for $\alpha \in \Omega^{i}_{R}A,\ \beta \in \Omega_{R}^{j} A$
$[\alpha, \beta]_{s} = \alpha \beta - (-1)^{ij} \beta \alpha$.
By definition, we have
$\mathrm{DR}^{0}_{R}A \cong A_{\natural},\ \mathrm{DR}^{1}_{R}A \cong \Omega^{1}_{R}A/[A,\Omega^{1}_{R}A]_{s}$.  

Let  $A$ be an $R$-algebra;
there exist two $A$-bimodule structures on $A \otimes A$: one is the outer 
and the other is the inner, given by $a(x \otimes y)b : = (ax) \otimes (yb)$
and $a*(x \otimes y)* b := (xb) \otimes (ay)$, respectively, for any $a,\ b,\ x,\ y \in A$.
 
\begin{definition}
Let $A$ be an $R$-algebra. The set of {\it double derivations} on 
$A$, denoted by $\mathbb D\mathrm{er}_R A$,
is the set of $R$-derivations from $A$ to the outer $A$-bimodule
$A \otimes A$.
\end{definition}

Due to the inner $A$-bimodule structure on 
$A \otimes A$,  $\mathbb{D}\mathrm{er}_{R}A$ is also an $A$-bimodule.
Next we recall the non-commutative version of contractions and Lie derivatives.

\begin{definition}
Let $A$ be an $R$-algebra and $\Theta \in \mathbb{D}\mathrm{er}_{R}A$.
The {\it contraction} of the non-commutative forms of $A$ 
with $\Theta$ is the $A$-linear map 
is
$i_{\Theta}:\, \Omega_{R}^{\bullet}A\, \rightarrow\, 
\Omega_{R}^{\bullet}A \otimes \Omega_{R}^{\bullet}A$
given by
\begin{displaymath}
\ d\alpha_{1}\cdots d\alpha_{n} \mapsto \sum_{k = 1} ^{n} 
(-1)^{k-1} \big(d\alpha_{1}\cdots d\alpha_{k-1}\Theta^{'}
(\alpha_{k}) \big) \otimes \big(\Theta^{''}(\alpha_{k}) d\alpha_{k+1}\cdots d\alpha_{n}  \big).
\end{displaymath} 
\end{definition}

Here for a 1-form $d\alpha \in \Omega_{R}^{1}A$, 
$i_{\Theta} (d\alpha) = \Theta(\alpha) = \Theta^{'}(\alpha) \otimes \Theta^{''}(\alpha)$.

\begin{definition}
Let $A$ be an $R$-algebra and let $\Theta$ be a double derivation on $A$. 
For an $n$-form $a_{0}d a_{1} \cdots d a_{n} \in \Omega^{n}_{R}A$, the 
{\it Lie derivative}
of $a_{0}d a_{1} \cdots d a_{n}$ with respect to $\Theta$ is given by
\begin{align*}
&L_{\Theta} (a_{0}d a_{1} \cdots d a_{n})\\
&= \Theta^{'}(a_{0}) \otimes \Theta^{''}(a_{0}) d a_{1}\cdots  d a_{n} \\
&+
\sum_{k=1}^{n} \big(a_{0}d a_{1}\cdots  d a_{k-1} d\Theta^{'}(a_{k}) \big) 
	\otimes \big( \Theta^{''}(a_{k}) d a_{k+1} \cdots  d a_{n} \big)\\
&+ \big(a_{0}d a_{1}\cdots  d a_{k-1} 
	\Theta^{'}(a_{k}) \big) \otimes \big( d\Theta^{''}(a_{k}) d a_{k+1} \cdots  d a_{n} \big).
\end{align*}
\end{definition}

Now for $\alpha \otimes \beta \in \Omega^{k}_{R}A \otimes \Omega^{l}_{R}A$, 
we write
\begin{displaymath}
	(\alpha \otimes \beta)^{\diamond}:=(-1)^{kl}\beta \alpha \in \Omega^{k+l}_{R}A.
\end{displaymath}

\begin{definition}
Let $A$ be an $R$-algebra and let $\Theta$ be a double derivation on $A$. 
The {\it reduced contraction} and the {\it reduced Lie derivative} with respect to $\Theta$ are
given by
\begin{displaymath}
\iota _{\Theta}:\, \Omega^{\bullet}_{R}A \, \rightarrow \, 
\Omega^{\bullet-1}_{R}A,\ \alpha \mapsto \iota_{\Theta} 
\alpha = (i_{\Theta} \alpha) ^{\diamond}
\end{displaymath}
and
	\begin{displaymath}
		\mathcal{L}_{\Theta}:\, \Omega^{\bullet}_{R}A \, \rightarrow \, \Omega^{\bullet}_{R}A,\ \alpha \mapsto \mathcal{L}_{\Theta} \alpha = (L_{\Theta} \alpha)^{\diamond}.
	\end{displaymath}
\end{definition}

\begin{proposition}[\cite{CBEG2007} Lemmas 2.8.6 and 2.8.8] 
\label{proposition: Cartam formula and conctraction }	
$(1)$ The Cartan formulas hold:
\begin{displaymath}
i_{\Theta}\circ d - d \circ i_{\Theta} = L_{\Theta}\quad\mbox{and}\quad
d \circ \iota_{\Theta} - \iota_{\Theta} \circ d = \mathcal{L}_{\Theta},
\;\text{for any $\Theta \in \mathbb{D}\mathrm{er}_{R}A.$}
\end{displaymath}

$(2)$ Suppose $A$ is an $R$-algebra and 
$\Theta \in \mathbb{D} \mathrm{er}_{R}A$. Then
\begin{enumerate}
\item[\textup{(i)}] for any $\omega \in \Omega^{n}_{R}A$, the map 
$\omega \mapsto \iota_{\Theta} \omega$ descends to a well-defined map 
$$\iota_{\Theta}:\, \mathrm{DR}^{n}_{R}A\, \rightarrow\, \Omega^{n-1}_{R}A;$$

\item[\textup{(ii)}] for a fixed $\omega \in \mathrm{DR}_{R}^{n}A$, 
there exsits a homomorphism of $A$-bimodules:
$$\iota \omega:\, \mathbb{D}\mathrm{er}_{R}A\, \rightarrow\, 
\Omega^{n-1}_{R}A,\ \Phi \mapsto \iota_{\Phi}\omega;$$

\item[\textup{(iii)}] for any $\omega \in \Omega_{R}^{n}A$, 
the following diagram commutes:
\begin{displaymath}
\xymatrix{
(\mathbb{D}\mathrm{er}_{R}A)_{\sharp} \ar[d]_-{({\iota(\omega)})_{\sharp}} 
\ar[rr]^-{\mathbf{m}_{\star}}&&\mathrm{Der}_{R}A \ar[d]^-{i: \theta \mapsto i_{\theta} \omega} \\
(\Omega_{R}^{n-1}A)_{\sharp} \ar[rr]&& \mathrm{DR}_{R} ^{n-1}A.	
}
\end{displaymath} 
\end{enumerate}
\end{proposition}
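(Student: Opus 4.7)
The plan is to verify every claim by explicit computation on the generators $a_{0}\,da_{1}\cdots da_{n}$ of $\Omega^{\bullet}_{R}A$, using the derivation property of $d$ together with the defining formulas for $i_{\Theta}$ and $L_{\Theta}$.

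For the unreduced Cartan formula $i_{\Theta}\circ d - d\circ i_{\Theta} = L_{\Theta}$, I would apply both sides to such a monomial. Since $d(a_{0}\,da_{1}\cdots da_{n}) = da_{0}\,da_{1}\cdots da_{n}$, the composition $i_{\Theta}\circ d$ contributes a ``$k=0$'' summand $\Theta'(a_{0})\otimes\Theta''(a_{0})\,da_{1}\cdots da_{n}$ together with shifted summands indexed by $k\ge 1$. Subtracting $d\circ i_{\Theta}$, the differential $d$ acts on both tensor factors of $(a_{0}\,da_{1}\cdots\Theta'(a_{k}))\otimes(\Theta''(a_{k})\cdots da_{n})$ and produces exactly the $d\Theta'(a_{k})$ and $d\Theta''(a_{k})$ terms appearing in $L_{\Theta}$. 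For the reduced version, I would first check by a short Koszul-sign computation that $d$ commutes with the $\diamond$ operation, i.e.\ $(d(\alpha\otimes\beta))^{\diamond} = d((\alpha\otimes\beta)^{\diamond})$ for $\alpha\in\Omega^{k}, \beta\in\Omega^{l}$, and then apply $\diamond$ to both sides of the unreduced identity.

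For (2)(i), well-definedness on $\mathrm{DR}^{n}_{R}A$ is equivalent to $\iota_{\Theta}([\alpha,\beta]_{s}) = 0$ for $\alpha\in\Omega^{k}_{R}A$ and $\beta\in\Omega^{l}_{R}A$. I would expand $i_{\Theta}(\alpha\beta)$ as a sum according to whether $\Theta$ lands inside $\alpha$ or inside $\beta$, expand $i_{\Theta}(\beta\alpha)$ similarly, and apply $\diamond$ to both; because $\diamond$ reverses tensor factors with a Koszul sign, the contributions from $\alpha\beta$ cancel against those from $(-1)^{kl}\beta\alpha$. For (2)(ii), I would verify $A$-bimodule linearity of $\Phi\mapsto\iota_{\Phi}\omega$ directly: the inner $A$-bimodule structure on $\mathbb{D}\mathrm{er}_{R}A$ gives $(a\Phi b)(x) = \Phi'(x)b\otimes a\Phi''(x)$, and pushing this through the contraction and the subsequent $\diamond$ yields exactly $a\,(\iota_{\Phi}\omega)\,b$ in the outer bimodule structure on $\Omega^{n-1}_{R}A$.

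For (2)(iii), I would identify $\mathbf{m}_{\star}$ as the map $\Theta\mapsto\mathbf{m}\circ\Theta$ converting double derivations to ordinary ones, and then compare $i_{\mathbf{m}\circ\Theta}\omega = \mathbf{m}(i_{\Theta}\omega)$ with $\iota_{\Theta}\omega = (i_{\Theta}\omega)^{\diamond}$. These two representatives in $\Omega^{n-1}_{R}A$ differ termwise by expressions of the form $xy - (-1)^{|x||y|}yx$, that is, by elements of $[\Omega^{\bullet}_{R}A,\Omega^{\bullet}_{R}A]_{s}$, which vanish upon passing to $\mathrm{DR}^{n-1}_{R}A$; this gives the commutativity of the square. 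The main obstacle throughout is sign bookkeeping in the graded/super setting: the $\diamond$ operation, the super commutator $[-,-]_{s}$, and the Koszul signs in the definition of $L_{\Theta}$ interact non-trivially, and part (2)(i) is the most delicate, since the positional asymmetry of contraction requires carefully matching the two expansions before the cancellation becomes visible.
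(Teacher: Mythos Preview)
The paper does not supply its own proof of this proposition: it is stated with an attribution to \cite{CBEG2007}, Lemmas~2.8.6 and~2.8.8, and the text moves on immediately. So there is no argument in the paper to compare against.

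Your direct computational verification is a correct way to establish the result and is in the spirit of what is done in the cited source. One point worth double-checking: as stated, the unreduced Cartan identity reads $i_{\Theta}d - d\,i_{\Theta} = L_{\Theta}$ while the reduced one reads $d\,\iota_{\Theta} - \iota_{\Theta}d = \mathcal{L}_{\Theta}$, with the order of terms swapped. Your plan is to deduce the reduced formula by applying $(-)^{\diamond}$ to the unreduced one, using that $d$ commutes with $\diamond$; that yields $\iota_{\Theta}d - d\,\iota_{\Theta} = \mathcal{L}_{\Theta}$, which differs in sign from the displayed reduced formula. This is a convention issue (and the two displayed identities as written are not simultaneously compatible under your $\diamond$-compatibility claim), so when you write up the details you should fix one sign convention and carry it through consistently rather than copying both formulas verbatim. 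Apart from this bookkeeping, your outline for (2)(i)--(iii) is sound: vanishing on super-commutators for (i), direct verification of inner/outer bimodule compatibility for (ii), and the observation that $\mathbf{m}(i_{\Theta}\omega)$ and $(i_{\Theta}\omega)^{\diamond}$ differ by super-commutators for (iii) are exactly the right ingredients.
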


Recall that $R=\oplus _{i \in Q_{0}} \mathbb{K}e_{i}$, 
those idempotents give an important double derivation
\begin{displaymath}
\Delta:\, A\, \rightarrow\, A \otimes A,\ a \mapsto \sum_{i} ae_{i} 
\otimes e_{i} - e_{i} \otimes e_{i}a.
\end{displaymath}
Now, we recall a version of non-commutative symplectic structure introduced in \cite{CBEG2007}.

\begin{definition}\label{definition:bi-symplectic}
Let $A$ be an $R$-algebra. A 2-form
$\omega \in \mathrm{DR}^{2} _{R}A$ is called {\it bi-symplectic} if it satisfies
\begin{enumerate}
\item $\omega$ is closed, that is $d\omega = 0$;
		
\item $\iota \omega:\, \mathbb{D}\mathrm{er}_{R}A\, 
\rightarrow\, \Omega_{R} ^{1} A$ 
is a bijection of $A$-bimodules.
\end{enumerate}
An $R$-algebra $A$ equipped with a bi-symplectic form $\omega$ is 
called a {\it bi-symplectic space}.
\end{definition}

\subsection{Representation spaces and trace maps}\label{Sect:Repspaces}

Let $R$ be the semisimple $\mathbb{K}$-algebra 
$\oplus_{i \in I} \mathbb{K}e_{i}$, where $I=\{1,2,3,\cdots,n\}$ is a finite set and 
let $A$ be an $R$-algebra. For a $\mathbb{K}$-vector space 
$V=\oplus_{i \in I} V_{i}$, where $V_{i}$ is a subspace of $V$. $V$ is 
canonically a left $R$-module and 
thus $\mathrm{End}_{\mathbb{K}}(V)$ is an $R$-algebra.

\begin{definition}
Let $A$ be an $R$-algebra, and let $V=\oplus_{i \in I} V_{i}$ be a $\mathbb{K}$-vector space. 
The {\it representation space} $\mathrm{Rep}(A,V)$ of $A$ on $V$ is 
\begin{displaymath}
\mathrm{Rep}(A,V):= \mathrm{Hom}_{\mathrm{Alg}_{R}}(A, \mathrm{End}_{\mathbb{K}}(V)),
\end{displaymath}
where $\mathrm{Alg}_{R}$ is the category of $R$-algebras.
\end{definition}

$\mathrm{Rep}(A,V)$ is equipped with a $\mathrm{GL}(V)$-action by conjugation, 
which makes $\mathrm{Rep}(A,V)$ into a $\mathrm{GL}(V)$-variety. 

In practice, we usually consider $V= \oplus_{i \in I} \mathbb{K}^{d_{i}}$ for a dimension vector 
$\mathbf{d} \in \mathbb{N}^{I}$; in this case, we denote 
$\mathrm{Rep}(A,V)$
by $\mathrm{Rep}(A,\mathbf{d})$.
As an affine variety, the coordinate ring of $\mathrm{Rep}(A,\mathbf{d})$ is characterized
by the following:

\begin{proposition}[\cite{Van2008Double} Section 7.1]
For any $A \in \mathrm{Alg}_{R}$ and $\mathbf{d} \in \mathbb{N}^{I}$, 
the coordinate ring $A_{\mathbf{d}}$ of $\mathrm{Rep}(A,\mathbf{d})$ is generated by 
$\{(a)_{ij}|1\leq i,j \leq |\mathbf{d}|:= \sum_{i} d_{i}\}$, subject to following relations:
for all $a,\ b \in A$, $k\in \mathbb{K}$,
\begin{enumerate}
		\item $(e_{k})_{ij} =\delta_{\phi (i),k} \delta_{ij}$;
		
		\item $(ka)_{ij}= k(a)_{ij}, (a + b)_{ij} = (a)_{ij} + (b)_{ij} $;
		
		\item $(ab)_{ij} = \sum_{u} (a)_{iu}(b)_{uj}$.
\end{enumerate}
\end{proposition}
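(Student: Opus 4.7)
The plan is to realize $\mathrm{Rep}(A,\mathbf d)$ as an affine scheme by exhibiting the commutative $\mathbb K$-algebra $\widetilde{A}_{\mathbf d}$ defined by the stated generators and relations as the representing object for the functor $B\mapsto \mathrm{Hom}_{\mathrm{Alg}_R}(A,M_{|\mathbf d|}(B))$. Here $V=\oplus_{i\in I}\mathbb K^{d_i}$ comes with a canonical ordered basis, which gives a function $\phi:\{1,\dots,|\mathbf d|\}\to I$ assigning each basis vector its block. The $R$-action on $M_{|\mathbf d|}(B)=\mathrm{End}_{\mathbb K}(V)\otimes B$ is via the block idempotents, and exactly translates into the condition $(e_k)_{ij}=\delta_{\phi(i),k}\delta_{ij}$.

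First I would introduce $\widetilde{A}_{\mathbf d}$ formally: take the polynomial algebra on symbols $(a)_{ij}$ for $a\in A,\ 1\le i,j\le |\mathbf d|$, and impose relations (1)--(3). The very purpose of these relations is to make the map
\begin{equation*}
\Psi: A\longrightarrow M_{|\mathbf d|}(\widetilde{A}_{\mathbf d}),\qquad a\mapsto \bigl((a)_{ij}\bigr)_{1\le i,j\le |\mathbf d|}
\end{equation*}
into an $R$-algebra homomorphism: relation (2) says $\Psi$ is $\mathbb K$-linear, (3) says it is multiplicative, and (1) says it is $R$-linear (since the idempotent $e_k$ must go to the projector onto $V_k=\bigoplus_{\phi(i)=k}\mathbb K\cdot \epsilon_i$).

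Second I would verify the universal property. Given any commutative $\mathbb K$-algebra $B$ and any $R$-algebra homomorphism $\rho:A\to M_{|\mathbf d|}(B)=\mathrm{End}_{\mathbb K}(V)\otimes B$, define $\widetilde\rho:\widetilde{A}_{\mathbf d}\to B$ by $(a)_{ij}\mapsto \rho(a)_{ij}$, the $(i,j)$-entry of $\rho(a)$ in the chosen basis. One checks directly that the three defining relations of $\widetilde{A}_{\mathbf d}$ are sent to valid identities in $B$: relation (1) uses the $R$-linearity of $\rho$, (2) uses its $\mathbb K$-linearity, and (3) is the formula for matrix multiplication. Conversely, any $\mathbb K$-algebra map $\varphi:\widetilde{A}_{\mathbf d}\to B$ produces an $R$-algebra homomorphism $A\to M_{|\mathbf d|}(B)$ by composing $\Psi$ with the entrywise application of $\varphi$. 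These two constructions are mutually inverse by construction, and the bijection is natural in $B$.

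The only genuine content — and the main bookkeeping obstacle — is to confirm that the $R$-linearity of a representation $\rho$ translates exactly into the matrix identity $(e_k)_{ij}=\delta_{\phi(i),k}\delta_{ij}$; in particular one must be careful that the function $\phi$ encoding the block decomposition of $V$ is fixed in advance and that the identification $\mathrm{End}_{\mathbb K}(V)\cong M_{|\mathbf d|}(\mathbb K)$ carries this decomposition to a block-diagonal one. Once this is in place, Yoneda's lemma identifies $\widetilde{A}_{\mathbf d}$ with the coordinate ring $A_{\mathbf d}$ of the affine scheme representing $\mathrm{Rep}(A,\mathbf d)$, which is the desired presentation.
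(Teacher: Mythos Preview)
Your argument is correct and is the standard representability proof: one checks that the commutative algebra $\widetilde A_{\mathbf d}$ defined by the generators and relations corepresents the functor $B\mapsto \mathrm{Hom}_{\mathrm{Alg}_R}(A,M_{|\mathbf d|}(B))$, which is by definition the functor of points of $\mathrm{Rep}(A,\mathbf d)$. The paper itself does not supply a proof of this proposition; it simply quotes the result from Van den Bergh \cite{Van2008Double}, Section~7.1, where precisely this representing-object argument is given. So there is nothing to compare against in the present paper, and your write-up matches the cited source.
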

Here $\phi:\{1,2,\cdots,|\mathbf{d}| \} \rightarrow I$ is defined by the property
$$\phi(i) = k \text{ if and only if } d_{1}+d_{2}+\cdots +d_{k-1}+1 \leq i \leq d_{1}+\cdots +d_{k}.$$

Similarly,
for an $n$-form $\omega=a_{0}d a_{1} \cdots  d a_{n} \in \Omega^{n}_{R}A$,
\begin{displaymath}
(\omega)_{ij} = \sum_{k_{1},..k_{n-1}}(a_{1})_{i,k_{1}} d(a_{1})_{k_{1},k_{2}}\cdots  
d (a_{n})_{k_{n-1},j}.
\end{displaymath}
This formula can be rewritten as products of matrices:
\begin{displaymath}
	(\omega) = (a_{0}) d(a_{1}) \cdots  d(a_{n})
\end{displaymath}
where $d(a_{i})$ is the matrix with $(u,v)$-entity being $d (a_{i})_{uv}$.

For $\Theta \in \mathbb{D}\mathrm{er}_{R}(A)$, the induced vector field, 
unlike the case of differential forms, is given in the following form.
For $1 \leq i,j,u,v \leq |\mathbf{d}|$ and $a \in A$,
\begin{displaymath}
\Theta_{ij}(a_{uv}) := {\Theta^{'}(a)}_{uj} {\Theta(a)^{''}}_{iv}.
\end{displaymath}

\begin{definition}\label{definition:trace map}
Suppose $A$ is an $R$-algebra, and $\mathbf{d} \in \mathbb{N}^{I}$ is a dimension vector. 
The {\it (classical) trace map} 
$\mathrm{Tr}:\, A\, \rightarrow\, A_{\mathbf{d}}$ 
is defined to be
\begin{displaymath}
A \rightarrow  A_{\mathbf{d}},\ 
a \mapsto\{\rho\mapsto tr(\rho(a))\}.
\end{displaymath}
\end{definition}

Here $tr(-)$ means taking the trace of the matrices.
Similarly, one can define the trace map $\mathrm{Tr}$ for differential forms.

\begin{proposition}[\cite{CBEG2007} Theorem 6.4.3]
\label{proposition:bi-symplectic goes to classical symplectic}
Given an $R$-algebra $A$ with bi-symplectic form $\omega$,
$\mathrm{Rep}(A,V)$ is a symplectic manifold with symplectic 
form $\mathrm{Tr}(\omega)$.
\end{proposition}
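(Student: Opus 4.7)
The plan is to verify that $\mathrm{Tr}(\omega)$ satisfies the two defining conditions of a classical symplectic form on the smooth affine variety $\mathrm{Rep}(A,V)$: it is closed and non-degenerate. The overall strategy is to push the bi-symplectic data forward to $\mathrm{Rep}(A,V)$ through the trace map and then exploit the commutative diagram of Proposition~\ref{proposition: Cartam formula and conctraction }(2)(iii) to translate the $A$-bimodule isomorphism $\iota\omega$ into a pointwise non-degenerate pairing between tangent and cotangent vectors.

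Closedness is the easy step. First I would check that $\mathrm{Tr}$ defines a chain map $\mathrm{Tr}:\mathrm{DR}^{\bullet}_{R}A\to \Omega^{\bullet}(\mathrm{Rep}(A,V))$; this amounts to verifying $d\circ\mathrm{Tr}=\mathrm{Tr}\circ d$ on generators $a_{0}da_{1}\cdots da_{n}$, which is immediate from the matrix formula $(\omega)=(a_{0})d(a_{1})\cdots d(a_{n})$ and the Leibniz rule, together with the vanishing of $\mathrm{Tr}$ on supercommutators (which is what makes $\mathrm{Tr}$ factor through $\mathrm{DR}^{\bullet}_{R}A$ in the first place). Combined with $d\omega=0$, we get $d\,\mathrm{Tr}(\omega)=0$.

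Non-degeneracy is the substantive step. The tangent space $T_{\rho}\mathrm{Rep}(A,V)$ at a representation $\rho$ is $\mathrm{Der}_{R}(A,\mathrm{End}_{\mathbb{K}}(V)_{\rho})$. A double derivation $\Theta\in\mathbb{D}\mathrm{er}_{R}A$ produces, via the multiplication map $\mathbf{m}_{\star}:(\mathbb{D}\mathrm{er}_{R}A)_\sharp\to\mathrm{Der}_{R}A$ followed by evaluation at $\rho$, a tangent vector at $\rho$; explicitly, this is the vector field given by $\Theta_{ij}(a_{uv})=\Theta'(a)_{uj}\Theta''(a)_{iv}$. The commutative diagram in Proposition~\ref{proposition: Cartam formula and conctraction }(2)(iii) applied to $\omega$ shows that contracting this classical vector field against $\mathrm{Tr}(\omega)$ recovers $\mathrm{Tr}(\iota_{\Theta}\omega)$. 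Since $\iota\omega:\mathbb{D}\mathrm{er}_{R}A\to\Omega^{1}_{R}A$ is a bijection of $A$-bimodules by Definition~\ref{definition:bi-symplectic}, and since $\mathrm{Tr}$ is functorial on both sides, the induced pairing of vector fields with 1-forms at $\rho$ should be non-degenerate.

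The main obstacle will be turning the last sentence into a rigorous pointwise statement: one must show that the tangent vectors of the form $\mathbf{m}_{\star}(\Theta)_{\rho}$ span $T_{\rho}\mathrm{Rep}(A,V)$, and dually that the 1-forms $\mathrm{Tr}(\alpha)$ for $\alpha\in\Omega^{1}_{R}A$ generate the cotangent module of $\mathrm{Rep}(A,V)$, so that the $A$-bimodule isomorphism $\iota\omega$ genuinely descends to an isomorphism between tangent and cotangent bundles. This spanning property is precisely where smoothness of $A$ is essential, since it guarantees that $\Omega^{1}_{R}A$, and hence $\mathbb{D}\mathrm{er}_{R}A$, is a projective $A$-bimodule; combining this projectivity with the Morita-type correspondence between finitely generated projective $A$-bimodules and $\mathrm{GL}(V)$-equivariant vector bundles on $\mathrm{Rep}(A,V)$ of the appropriate type provides the required translation, and this is the only real technical content of the proof.
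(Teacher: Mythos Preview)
The paper does not supply its own proof of this proposition; it is stated as a citation of \cite[Theorem 6.4.3]{CBEG2007} and used as a black box. There is therefore nothing in the present paper to compare your argument against.

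That said, your outline is essentially the argument one finds in \cite{CBEG2007}. Closedness via the chain-map property of $\mathrm{Tr}$ is standard, and your use of the diagram in Proposition~\ref{proposition: Cartam formula and conctraction }(2)(iii) to transport the bijection $\iota\omega$ to the representation space is exactly the mechanism behind the non-degeneracy in the original source. You have also correctly identified the only genuine technical point: one needs that the matrix entries $\Theta_{ij}$ of double derivations span the tangent sheaf of $\mathrm{Rep}(A,V)$ (and dually for $1$-forms), which in \cite{CBEG2007} is packaged as the statement that the functor sending a finitely generated projective $A$-bimodule $M$ to the associated vector bundle on $\mathrm{Rep}(A,V)$ takes $\Omega^{1}_{R}A$ to the cotangent bundle and $\mathbb{D}\mathrm{er}_{R}A$ to the tangent bundle. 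Your invocation of smoothness is on target: it is precisely what guarantees that $\Omega^{1}_{R}A$ is projective and that $\mathrm{Rep}(A,V)$ is a smooth variety, so that these identifications go through.
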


\subsection{Quivers and quiver representations}
Now let $Q$ be a finite quiver, 
and let $\mathbb KQ$ be the path algebra of $Q$.
Then a representation of $\mathbb KQ$ 
has an alternative description
which is given as follows.

Let $Q_{0}$ and $Q_1$ the sets of vertices
and arrows respectively. For $a \in Q_{1}$, $s(a)$ and
$t(a)$ are the source and the target of $a$ respectively.
Then a representation of $Q$ consists of the following collection of data:
\begin{itemize}
\item to each vertex $i$, we assign a $\mathbb{K}$-vector space $V_{i}$
with $\dim V_{i}=d_i$;
\item to each arrow $a$, we assign a linear operator 
	$f_{a}:\, V_{s(a)}\, \rightarrow \, V_{t(a)}$.
\end{itemize}
In what follows, we 
also write $\mathrm{Rep}(\mathbb KQ,\mathbf d)$
as $\mathrm{Rep}(Q,\mathbf d)$.

Let $\overline{Q}$ be the double of $Q$, 
which is obtained from $Q$ by adding opposite arrow $a^{*}$ for each $a \in Q_{1}$.
Then we have:

\begin{proposition}[\cite{CBEG2007} Proposition 8.1.1.]
\label{proposition: cotangent bundle of CQ is bi-symplectic}
Let $\mathbb{K} \overline{Q}$ be the path algebra of doubled quiver $Q$. 
Then $\mathbb{K} \overline{Q}$ is smooth
and the 2-form $\omega = \sum_{a\in Q} da d{a}^{*}$ is bi-symplectic.
In particular, $\mathrm{Rep}(\overline Q, \mathbf d)$
endows an induced symplectic structure.
\end{proposition}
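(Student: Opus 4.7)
The plan is to verify the three conditions---smoothness of $\mathbb{K}\overline{Q}$, closedness of $\omega$, and non-degeneracy of $\iota\omega:\mathbb{D}\mathrm{er}_R \mathbb{K}\overline{Q} \to \Omega^1_R \mathbb{K}\overline{Q}$---directly, exploiting the fact that both $\Omega^1_R \mathbb{K}\overline{Q}$ and $\mathbb{D}\mathrm{er}_R \mathbb{K}\overline{Q}$ admit explicit \emph{free} bimodule descriptions with bases indexed by the arrow set $\overline{Q}_1$. The first two conditions are essentially formal; the main content lies in the third.

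For smoothness, finite generation over $R$ is immediate since $\overline{Q}_1$ generates $\mathbb{K}\overline{Q}$ as an $R$-algebra. For projectivity of $\Omega^1_R \mathbb{K}\overline{Q}$, I would invoke the standard path-algebra description
$$\Omega^1_R \mathbb{K}\overline{Q} \,\cong\, \bigoplus_{a \in \overline{Q}_1} \mathbb{K}\overline{Q}\cdot e_{t(a)} \otimes_{\mathbb{K}} e_{s(a)}\cdot \mathbb{K}\overline{Q}, \qquad p\otimes q \,\mapsto\, p\, da\, q,$$
which exhibits $\Omega^1_R \mathbb{K}\overline{Q}$ as a free, hence projective, bimodule. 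For closedness of $\omega$, the graded Leibniz rule and $d^2 = 0$ yield $d\omega = \sum_{a \in Q_1}\bigl(d(da)\cdot da^* - da\cdot d(da^*)\bigr) = 0$ immediately.

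For non-degeneracy I would identify $\mathbb{D}\mathrm{er}_R \mathbb{K}\overline{Q}$ as a free bimodule with basis $\{\partial_b : b \in \overline{Q}_1\}$, where $\partial_b$ is the unique double derivation satisfying $\partial_b(c) = \delta_{bc}\, e_{t(b)} \otimes e_{s(b)}$ on generators $c \in \overline{Q}_1$. Then I compute $\iota_{\partial_b}\omega$ via the definition $\iota_\Theta = (i_\Theta\,\cdot\,)^\diamond$ applied to $\omega = \sum_{a\in Q_1} da\, da^*$. For $b = a \in Q_1$, only the summand $da\, da^*$ contributes, and only its first term in the contraction formula survives; after applying $\diamond$ one finds $\iota_{\partial_a}\omega = e_{s(a)}\, da^*\, e_{t(a)} = da^*$. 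Analogously, $\iota_{\partial_{a^*}}\omega = -da$. Since $b\mapsto b^*$ is an involution of $\overline{Q}_1$, the map $\iota\omega$ carries the bimodule basis $\{\partial_b\}$ bijectively, up to a sign, to the bimodule basis $\{db\}$, which yields the required bimodule isomorphism.

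The main obstacle is bookkeeping: keeping track of the source/target idempotents attached to each $\partial_b$, correctly handling the signs coming from the super-commutator in the $\diamond$ operation, and ensuring the computation descends to the Karoubi-de Rham quotient $\mathrm{DR}^2_R \mathbb{K}\overline{Q}$ on which $\iota\omega$ is ultimately defined (via Proposition~\ref{proposition: Cartam formula and conctraction }(2)(i)). An alternative that sidesteps direct sign manipulation is to exhibit the inverse explicitly: define $\pi:\Omega^1_R \mathbb{K}\overline{Q} \to \mathbb{D}\mathrm{er}_R \mathbb{K}\overline{Q}$ by $da \mapsto -\partial_{a^*}$, $da^* \mapsto \partial_a$, extend by bimodule linearity, and verify $\pi\circ \iota\omega = \mathrm{id}$ on each generator $\partial_b$, completing the argument.
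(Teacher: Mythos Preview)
The paper does not supply its own proof of this proposition; it is stated as a citation of \cite[Proposition~8.1.1]{CBEG2007}. Your argument is correct and is essentially the standard one (and the one in \cite{CBEG2007}): exhibit $\Omega^1_R\mathbb{K}\overline{Q}$ and $\mathbb{D}\mathrm{er}_R\mathbb{K}\overline{Q}$ as free bimodules on bases $\{db\}_{b\in\overline{Q}_1}$ and $\{\partial_b\}_{b\in\overline{Q}_1}$ respectively, then check that $\iota\omega$ matches these bases up to sign. Your contraction computations $\iota_{\partial_a}\omega=da^*$ and $\iota_{\partial_{a^*}}\omega=-da$ are correct. One small omission: you do not address the ``In particular'' clause about $\mathrm{Rep}(\overline{Q},\mathbf{d})$, but that follows immediately from Proposition~\ref{proposition:bi-symplectic goes to classical symplectic} once the bi-symplectic claim is established, so it requires no further work.
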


\section{Non-commutative reduction and quantization}\label{sect:NCredandquant}

In this section, we study the non-commutative Hamiltonian reduction
and quantization for a doubled quiver, viewed as a bi-symplectic space.
The main result is Theorem \ref{thm:NCQR}, which says that for such
a bi-symplectic space,
its non-commutative quantization commutes with
its non-commutative reduction.

\subsection{The non-commutative Hamilton operator}

Let $\mathfrak{g}$ be a Lie algebra and $(C^{\bullet},d)$ a cochain
complex of $\mathbb{K}$-vector spaces; let ${\mathrm{Com}_{\mathbb{K}} }$ be the category of cochain complex of $\mathbb{K}$-vector spaces.

A  $\mathfrak{g}$-equivariant structure on $(C^{\bullet},d)$ is a pair of linear maps
\begin{displaymath}
L : \mathfrak{g} \rightarrow \mathrm{Hom}_{\mathrm{Com}_{\mathbb{K}}  }(C^{\bullet}, C^{\bullet}),\ x \mapsto L_{x} 
\end{displaymath}
and
\begin{displaymath}
i: \mathfrak{g} \rightarrow \mathrm{Hom}_{\mathrm{Com}_{\mathbb{K}}  }(C^{\bullet }, C^{\bullet -1}),\ x \mapsto i_{x}
\end{displaymath}
which satisfies that,
for any $x,y \in \mathfrak{g}$,
$$[L_{x},L_{y}] = L_{[x,y]},\ L_{x} = d\circ i_{x}  + i_{x}\circ d,\ 
i_{x} \circ i_{y} + i_{y} \circ i_{x} = 0,\ 
[L_{x}, i_{y}] = i_{[x,y]}.
$$

\begin{definition}
Let $(C^{\bullet},d)$ be a $\mathfrak{g}$-equivariant complex. 
A linear map $H: C^{1} \rightarrow \mathfrak{g}$ is called a {\it Hamilton operator} 
if it satisfies the following conditions: for any $\alpha,\ \beta \in C^{1}$,
\begin{itemize}
	\item[$(1)$] $i_{H(\alpha)} \beta + i_{H(\beta)} \alpha = 0$;
	\item[$(2)$] $[H(\alpha),H(\beta)] = 
		H( i_{H(\alpha) }  \circ d\beta - i_{H(\beta)} 
		\circ d\alpha + d \circ i_{H(\alpha)} \beta)$.
\end{itemize}
\end{definition}

\begin{proposition}[\cite{CBEG2007} Proposition 4.3.5]
Let $(C^{\bullet},d)$ be a $\mathfrak{g}$-equivariant complex and 
let $H: C^{1} \rightarrow \mathfrak{g}$ be a Hamilton operator.
Then there exists a Lie bracket on $C^1$ 
induced from $H$ by the formula 
$$\{{x,y}\} := i_{H(dx)}dy,\ \text{for any }x,\ y \in C^{1}.$$
\end{proposition}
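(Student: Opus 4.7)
\emph{Proof plan.} The proposition asserts that $\{x,y\} := i_{H(dx)}\,dy$ defines a Lie bracket; for the formula to type-check, $dx$ and $dy$ must lie in $C^1$ (the domain of $H$), so I will read the bracket as being on $C^0$ rather than $C^1$ as printed. The plan is to verify antisymmetry and the Jacobi identity by directly combining the two Hamilton-operator axioms with the standard Cartan calculus encoded in the $\mathfrak{g}$-equivariance.

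Antisymmetry is immediate from axiom (1) applied to $\alpha = dx$, $\beta = dy$: it gives $i_{H(dx)}dy + i_{H(dy)}dx = 0$, i.e.\ $\{x,y\} = -\{y,x\}$. The non-trivial preparatory step is to extract from axiom (2) the clean compatibility
\[
[H(dx), H(dy)] \;=\; H(d\{x,y\}),
\]
obtained by specialising $\alpha = dx$, $\beta = dy$ in axiom (2) and observing that the two terms $i_{H(dx)}\,d(dy)$ and $i_{H(dy)}\,d(dx)$ vanish because $d^2 = 0$.

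Given this, Jacobi unfolds as follows. First,
\[
\{\{x,y\}, z\} \;=\; i_{H(d\{x,y\})}\,dz \;=\; i_{[H(dx),\,H(dy)]}\,dz.
\]
The equivariance identity $i_{[v,w]} = [L_v, i_w]$ rewrites the right-hand side as $L_{H(dx)} i_{H(dy)}\,dz - i_{H(dy)} L_{H(dx)}\,dz$. Applying the Cartan formula $L_v = d i_v + i_v d$ together with $d^2 = 0$ shows that $L_{H(dx)}\,dz = d\,i_{H(dx)}\,dz = d\{x,z\}$, and that on the degree-$0$ element $\{y,z\}$ one has $L_{H(dx)}\{y,z\} = i_{H(dx)}\,d\{y,z\} = \{x,\{y,z\}\}$ (the $d i_{H(dx)}$ term lands in degree $-1$, which vanishes in the intended non-negatively graded setting, e.g.\ $C^\bullet = \mathrm{DR}^\bullet_R A$). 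Assembling the pieces gives
\[
\{\{x,y\},z\} \;=\; \{x,\{y,z\}\} - \{y,\{x,z\}\},
\]
which together with antisymmetry is exactly the Jacobi identity.

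The main obstacle is essentially bookkeeping: the crux is recognising that axiom (2) collapses to the clean compatibility $[H(dx),H(dy)] = H(d\{x,y\})$ precisely when its inputs are exact, after which everything reduces to routine Cartan-calculus manipulations. The only subtlety worth flagging is the vanishing of $d i_v$ on degree-$0$ classes, which is automatic in the geometric settings of interest.
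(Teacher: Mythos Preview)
The paper does not supply its own proof of this proposition; it is simply quoted from \cite{CBEG2007} and used as input for the subsequent discussion. Your argument is correct and is in fact the standard verification: antisymmetry from axiom~(1), the key compatibility $[H(dx),H(dy)]=H(d\{x,y\})$ from axiom~(2) specialised to exact inputs, and then Jacobi via $[L_v,i_w]=i_{[v,w]}$ and the Cartan formula. Your reading of the statement is also right---the bracket lives on $C^0$, not $C^1$, as is clear from the application in Proposition~\ref{prop:Kontsevich} where it is placed on $A_\natural=\mathrm{DR}^0_R A$. One tiny slip in wording: you write that the $d\,i_{H(dx)}$ term ``lands in degree $-1$,'' but it is $i_{H(dx)}\{y,z\}$ that lands in $C^{-1}$ (and $d$ of it in $C^0$); the conclusion is unchanged once one assumes $C^{-1}=0$, which holds for $\mathrm{DR}^\bullet_R A$ and is the intended setting.
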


From Definition \ref{definition:bi-symplectic} (2), we have a bijection
$$(\iota \omega)_{\natural}:\, (\mathbb{D}\mathrm{er}_{R} A)_{\natural} 
\, \rightarrow\, (\Omega_{R} ^{1} A)_{\natural} = \mathrm{DR}^{1}_{R}A.$$
On the other hand, 
the multiplication map $m: A \otimes A \rightarrow A$
induces a map 
$$\mathbf{m}_{*}:\mathbb{D}\mathrm{er}_{R} A \rightarrow \mathrm{Der}_{R}A,\ 
\Theta \mapsto \mathbf{m}\circ \Theta.$$
Hence, a bi-symplectic structure gives a Hamilton operator; that is,
we have:

\begin{proposition}[\cite{CBEG2007} Theorem 7.2.3]\label{prop:Kontsevich}
Let $A$ be an $R$-algebra with a bi-symplectic form $\omega$. Then we have the
following.
\begin{enumerate}
\item There exists a Hamilton operator
\begin{displaymath}
H_{\omega}:\, \mathrm{DR}^{1}_{R}A\, \rightarrow\, \mathrm{Der}_{R} A
\end{displaymath}
which is the composition
\begin{displaymath}
\xymatrixcolsep{1.6cm}\xymatrix{
\mathrm{DR}^{1}_{R}A \ar[r]^-{ (\iota \omega)_{\natural}^{-1}} 
& (\mathbb{D}\mathrm{er}_RA)_{\natural} \ar[r]^-{(\mathbf{m}_{*})_{\natural}}
& \mathrm{Der}_{R}A.
}
\end{displaymath}
\item $H_{\omega}$ induces a Lie bracket on $A_{\natural}$, explicitly,
\begin{displaymath}
\{ a,b\}:= i_{H_{\omega}(da)}(db),\ \text{for any $a,b \in A_{\natural}$}.
\end{displaymath}
\end{enumerate}
\end{proposition}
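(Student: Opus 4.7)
The plan is to verify the two axioms of a Hamilton operator for $H_\omega$ directly, and then to deduce part (2) from the abstract result for Hamilton operators on $\mathfrak{g}$-equivariant complexes (\cite{CBEG2007}, Proposition 4.3.5). First I check well-definedness: the non-degeneracy clause of Definition \ref{definition:bi-symplectic} furnishes an $A$-bimodule isomorphism $\iota\omega\colon \mathbb{D}\mathrm{er}_R A \to \Omega^1_R A$, which descends to a bijection $(\iota\omega)_\natural\colon (\mathbb{D}\mathrm{er}_R A)_\natural \to \mathrm{DR}^1_R A$. Proposition \ref{proposition: Cartam formula and conctraction }(iii) applied with $n = 2$ identifies, in $\mathrm{DR}^1_R A$, the class of $\iota_\Theta\omega$ with the ordinary contraction $i_{\mathbf{m}_*\Theta}\omega$, so the composite $H_\omega = (\mathbf{m}_*)_\natural \circ (\iota\omega)_\natural^{-1}$ is well-defined and characterized by the identity $i_{H_\omega(\alpha)}\omega = \alpha$ in $\mathrm{DR}^1_R A$, for every $\alpha$.

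The heart of the argument is a noncommutative Cartan computation on $\mathrm{DR}^\bullet_R A$, regarded as a $\mathrm{Der}_R A$-equivariant complex via the standard contractions $i_\theta$ and Lie derivatives $L_\theta$ (which satisfy $L_\theta = di_\theta + i_\theta d$ and $[L_\theta, i_\eta] = i_{[\theta,\eta]}$ by a direct check on generators of $\Omega^\bullet_R A$ that descends to $\mathrm{DR}^\bullet_R A$). Fix $\alpha, \beta \in \mathrm{DR}^1_R A$ and set $X = H_\omega(\alpha)$, $Y = H_\omega(\beta)$, so that $i_X\omega = \alpha$ and $i_Y\omega = \beta$. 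Hamilton axiom (1) is then immediate from $i_X i_Y + i_Y i_X = 0$ on 2-forms: $i_X\beta + i_Y\alpha = (i_X i_Y + i_Y i_X)\omega = 0$. For axiom (2) I use $d\omega = 0$, which yields $L_X\omega = d(i_X\omega) + i_X(d\omega) = d\alpha$, and then
\begin{equation*}
i_{[X,Y]}\omega \;=\; L_X(i_Y\omega) - i_Y L_X\omega \;=\; L_X\beta - i_Y d\alpha \;=\; d(i_X\beta) + i_X d\beta - i_Y d\alpha
\end{equation*}
in $\mathrm{DR}^1_R A$; inverting $i\omega$ yields $[X,Y] = H_\omega(d(i_X\beta) + i_X d\beta - i_Y d\alpha)$, which is exactly axiom (2).

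Part (2) of the proposition then follows by specialization. The bracket $\{a,b\} := i_{H_\omega(da)}(db)$ on $A_\natural = \mathrm{DR}^0_R A$ is produced by \cite{CBEG2007} Proposition 4.3.5 applied to exact 1-forms; skew-symmetry is axiom (1) for $\alpha = da$, $\beta = db$, and axiom (2) specialized to closed forms simplifies to $[H_\omega(da), H_\omega(db)] = H_\omega(d\{a,b\})$, from which the Jacobi identity reduces (using $i_X dc = X(c)$ in $A_\natural$) to the bare Lie-algebra identity $[X,Y](c) = X(Yc) - Y(Xc)$.

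The main obstacle is less the Cartan calculation itself than the bookkeeping between three objects with different bimodule structures: double derivations, ordinary derivations, and classes in $\mathrm{DR}^\bullet_R A$. Each identity must be interpreted at the correct level, and Proposition \ref{proposition: Cartam formula and conctraction }(iii) is the bridge that repeatedly translates statements about $\iota_\Theta$ on $\Omega^\bullet_R A$ into statements about $i_{\mathbf{m}_*\Theta}$ on $\mathrm{DR}^\bullet_R A$, a translation needed both for the well-definedness of $H_\omega$ and for the non-degeneracy step in the verification of axiom (2).
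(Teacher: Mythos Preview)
The paper does not supply a proof of this proposition; it is stated with a citation to \cite{CBEG2007}, Theorem 7.2.3, and no argument is given here. So there is nothing in this paper to compare your proof against directly. Your approach---verifying the Hamilton-operator axioms via Cartan calculus on the Karoubi--de Rham complex and then invoking the abstract Lie-bracket result---is in the spirit of what \cite{CBEG2007} itself does, and the overall strategy is sound.

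There is one step you should tighten. After computing $i_{[X,Y]}\omega = d(i_X\beta) + i_X d\beta - i_Y d\alpha$ in $\mathrm{DR}^1_R A$, you write ``inverting $i\omega$ yields $[X,Y] = H_\omega(\cdots)$.'' This presumes that the map $\mathrm{Der}_R A \to \mathrm{DR}^1_R A$, $\theta \mapsto i_\theta\omega$, is injective. That is true, but it is not the content of Definition~\ref{definition:bi-symplectic}, which only asserts that $\iota\omega\colon \mathbb{D}\mathrm{er}_R A \to \Omega^1_R A$ is a bijection of $A$-bimodules. What you need is that the commutative square of Proposition~\ref{proposition: Cartam formula and conctraction }(iii) forces $(\mathbf{m}_*)_\natural$ to be injective (since $(\iota\omega)_\natural$ is bijective), and that smoothness of $A$ makes $\mathbf{m}_*\colon \mathbb{D}\mathrm{er}_R A \to \mathrm{Der}_R A$ surjective (because $\Omega^1_R A$ is projective as an $A$-bimodule, so $\mathrm{Hom}_{A^e}(\Omega^1_R A,-)$ preserves the surjection $A\otimes A \twoheadrightarrow A$). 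Together these give that $(\mathbf{m}_*)_\natural$ is a bijection, hence so is $\theta \mapsto i_\theta\omega$, and your inversion is legitimate. You invoke smoothness implicitly via the paper's standing hypothesis, but the role it plays at this exact point deserves a sentence.
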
 

The Lie bracket in Proposition \ref{prop:Kontsevich} (2) was first found by 
Kontsevich, and coincides with the {\it $H_{0}$-Poisson structure} introduced by 
Crawley-Boevey in \cite{Cra2011}, whose definition is the following:

\begin{definition}[\cite{Cra2011} Definition 1.1]
Let $A$ be an $R$-algebra.
A linear map 
$$p:\, A_{\natural}\, \rightarrow\, \frac{\mathrm{Der}_{R}A}{\mathrm{Inn}_{R}A}$$
is called an {\it $H_{0}$-Poisson structure}
if for any $a,\ b \in A$ with  $\overline{a},\ \bar{b}\in A_{\natural}$ respectively, 
\begin{displaymath}
\{\bar{a}, \bar{b}\}_{p}:= \overline{p(a)(b)}
\end{displaymath}
is a Lie bracket on $A_{\natural}$.
\end{definition}

\subsection{Noncommutative Hamiltonian reduction}

Suppose $R$ is the 
semisimple $\mathbb{K}$-algebra $\oplus_{i=1} ^{n} \mathbb{K}e_{i}$.
Suppose 
$A$ is a bi-symplectic $R$-algebra with bi-symplectic form $\omega$. 
For any $a\in A$, by Definition \ref{definition:bi-symplectic},
there exists $\mathcal{H}_{a}\in\mathbb D\mathrm{er}_RA$ such that
$$\iota_{\mathcal{H}_a}\omega=da\in\Omega^1_R A.$$

\begin{definition}[\cite{CBEG2007} Section 4.1 and 	
\cite{Van2008Double} Definition 2.6.4]
Let $A$ be as above.
A {\it non-commutative moment map} is
an element in $\mathbf{w}\in\oplus_i e_i A e_i$
such that
$$\mathcal{H}_{\mathbf w}(a)=\Delta(a)\in A\otimes A,$$
for any $a\in A$.
\end{definition}

\begin{definition}\label{definition:non-commutative reduction}
Suppose $(A,\omega)$ is a bi-symplectic space over $R$
and
$\mathbf{w}$ is a non-commutative moment map. 
Then the {\it non-commutative Hamiltonian reduction} of $A$ with respect to $\mathbf{w}$ is 
\begin{displaymath}
\mathcal{R}(A,\mathbf{w}) := \frac{A}{A \mathbf{w} A},
\end{displaymath}
where $A \mathbf{w} A$ is the two-side ideal generated by $\mathbf{w}$.
\end{definition}

\begin{proposition}[\cite{CBEG2007} Proposition 4.4.3 and Theorem 7.2.3]
\label{proposition:noncommutatiuve reduction H^0 Poisson}
Given an $R$-algebra $A$ with bi-symplectic form $\omega$ 
and non-commutative moment map $\mathbf{w}$.
\begin{enumerate}
\item For any $f\in A_{\natural}$, 
$\theta_{f}:= H_{\omega}(df)$ preserves $A \mathbf{w} A$, 
and thus descends to a well-defined derivation
\begin{displaymath}
\overline{\theta_{f}} \in \mathrm{Der}_{R}(\mathcal{R}(A,\mathbf{w})).
\end{displaymath}

\item The Lie bracket on $A_{\natural}$ descends to 
a well-defined Lie bracket on $(\mathcal{R}(A,\mathbf{w}))_{\natural}$.
\end{enumerate}
\end{proposition}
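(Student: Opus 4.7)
The plan is to establish (1) by a direct computation leveraging the moment-map equation $\mathcal{H}_{\mathbf{w}}=\Delta$, and then to deduce (2) formally from (1) via antisymmetry of the Lie bracket.

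For part (1), I lift $(\iota\omega)_\natural^{-1}(df)\in(\mathbb{D}\mathrm{er}_{R}A)_\natural$ to a double derivation $\mathcal{H}_{f}\in\mathbb{D}\mathrm{er}_{R}A$, so that $\theta_{f}=\mathbf{m}\circ\mathcal{H}_{f}$ up to an inner derivation. Since inner derivations automatically preserve any two-sided ideal, it is enough to work with this particular representative; and since $\theta_f$ is an $R$-derivation of $A$, the Leibniz rule reduces the preservation of $A\mathbf{w}A$ to showing
$$\theta_{f}(\mathbf{w})=\mathbf{m}\bigl(\mathcal{H}_{f}(\mathbf{w})\bigr)\in A\mathbf{w}A.$$
To compute $\mathcal{H}_{f}(\mathbf{w})\in A\otimes A$ I would pair $\mathcal{H}_{f}$ and $\mathcal{H}_{\mathbf{w}}$ against $\omega$ in both orders, using $\iota_{\mathcal{H}_{f}}\omega=df$, $\iota_{\mathcal{H}_{\mathbf{w}}}\omega=d\mathbf{w}$, the closedness $d\omega=0$, and the Cartan identities of Proposition~\ref{proposition: Cartam formula and conctraction }. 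The resulting bimodule-level identity should take the schematic form
$$\mathcal{H}_{f}(\mathbf{w})+\sigma\bigl(\mathcal{H}_{\mathbf{w}}(f)\bigr)\in A\mathbf{w}A\otimes A+A\otimes A\mathbf{w}A,$$
where $\sigma$ swaps the two tensor factors. Substituting $\mathcal{H}_{\mathbf{w}}(f)=\Delta(f)=\sum_{i}(fe_{i}\otimes e_{i}-e_{i}\otimes e_{i}f)$ and applying $\mathbf{m}$, the contribution from $\sigma(\Delta(f))$ collapses to $\sum_{i}(e_{i}fe_{i}-e_{i}fe_{i})=0$, while the remaining boundary terms visibly land in $A\mathbf{w}A$. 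Hence $\theta_{f}(\mathbf{w})\in A\mathbf{w}A$, and $\theta_f$ descends to a derivation of $\mathcal{R}(A,\mathbf{w})$.

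For part (2), I want the image of $A\mathbf{w}A$ in $A_\natural=A/[A,A]$ to be a Lie ideal for the Kontsevich--Ginzburg bracket of Proposition~\ref{prop:Kontsevich}, so that the bracket descends to $(\mathcal{R}(A,\mathbf{w}))_\natural=A/(A\mathbf{w}A+[A,A])$. Since $\{\overline{a},\overline{b}\}=\overline{\theta_{a}(b)}$, which is just a rewriting of the defining formula $\{a,b\}=i_{H_\omega(da)}db$, antisymmetry of the bracket gives, for $w\in A\mathbf{w}A$ and $a\in A$,
$$\{\overline{w},\overline{a}\}=-\{\overline{a},\overline{w}\}=-\overline{\theta_{a}(w)}\in\overline{A\mathbf{w}A},$$
because part (1) supplies $\theta_{a}(w)\in A\mathbf{w}A$. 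The Jacobi identity on $(\mathcal{R}(A,\mathbf{w}))_\natural$ is then inherited from the one on $A_\natural$.

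The main obstacle is the schematic identity invoked in part (1): pinning down the precise boundary corrections so that they sit inside $A\mathbf{w}A\otimes A+A\otimes A\mathbf{w}A$ is the technical heart of the argument, and it is at this step that the assumption that $\mathbf{w}$ is a non-commutative moment map (i.e.\ $\mathcal{H}_{\mathbf{w}}=\Delta$) enters in an essential way.
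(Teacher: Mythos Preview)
The paper does not supply its own proof of this proposition: it is stated with attribution to \cite[Proposition 4.4.3 and Theorem 7.2.3]{CBEG2007}, and the closely related Lemma~\ref{lemma: w is stable} is likewise proved only by reference to \cite{CBEG2007}. So there is no in-paper argument to compare against, and I can only assess your proposal on its own merits and against the original source.

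Your overall strategy is the right one and matches what Crawley-Boevey, Etingof and Ginzburg do. Two remarks sharpen it. First, since $\iota\omega:\mathbb{D}\mathrm{er}_{R}A\to\Omega^{1}_{R}A$ is a \emph{bijection} of $A$-bimodules (Definition~\ref{definition:bi-symplectic}(2)), you can take $\mathcal{H}_{f}:=(\iota\omega)^{-1}(df)$ as a canonical lift, and then $\theta_{f}=\mathbf{m}\circ\mathcal{H}_{f}$ on the nose, not merely up to an inner derivation; the ``up to inner'' caveat is harmless but unnecessary. Second, and more importantly, the ``schematic identity'' you worry about is in fact \emph{exact}: the assignment $\{\!\{a,b\}\!\}:=\mathcal{H}_{a}(b)$ is a double Poisson bracket in the sense of Van den Bergh, and for the bracket coming from a bi-symplectic form one has the strict antisymmetry $\{\!\{a,b\}\!\}=-\sigma\{\!\{b,a\}\!\}$ with no correction terms (this is what your Cartan/closedness computation actually yields). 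Plugging in $b=\mathbf{w}$ and using $\mathcal{H}_{\mathbf{w}}=\Delta$ then gives $\mathcal{H}_{f}(\mathbf{w})=-\sigma(\Delta(f))$, whence $\theta_{f}(\mathbf{w})=\mathbf{m}\bigl(-\sigma(\Delta(f))\bigr)=-\sum_{i}(e_{i}fe_{i}-e_{i}fe_{i})=0$. So $\theta_{f}(\mathbf{w})$ is not just in $A\mathbf{w}A$, it vanishes, and there are no ``boundary terms'' to control. Your part (2) then follows exactly as you say.
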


The following proposition says
the non-commutative Hamiltonian reduction
introduced above fits the Kontsevich-Rosenberg
principle, which
justifies its definition.

\begin{proposition}[\cite{CBEG2007} Theorem 6.4.3]

Given an $R$-algebra $A$ with bi-symplectic form $\omega$
and non-commutative moment map $\mathbf{w}$,
and a $\mathbb{K}$-vector space $V$
of finite dimension. Then:
\begin{enumerate}
\item Let $\mu:=(\mathbf{w}) \in \mathbb{K}
[\mathrm{Rep}(A,V)] \otimes \mathrm{End}_{\mathbb{K}}(V),$ 
then $\mu$ is a moment map for symplectic space $\mathrm{Rep}(A,V)$;
		
\item $\mathrm{Rep}(\mathcal{R}(A,\mathbf{w}),V)$ 
is a subscheme of $\mathrm{Rep}(A,V)$ and 
$\mathrm{Rep}(\mathcal{R}(A,\mathbf{w}),V) = \mu^{-1}(0)$.
\end{enumerate}
\end{proposition}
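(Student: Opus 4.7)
The plan is to handle the two parts separately. Part (2) is essentially a universal-property statement, while part (1) requires transferring the bi-symplectic moment map identity to the classical setting via the trace. For part (2), I observe that by definition $\mathrm{Rep}(\mathcal{R}(A,\mathbf{w}),V)=\mathrm{Hom}_{\mathrm{Alg}_R}(A/A\mathbf{w}A,\mathrm{End}_{\mathbb{K}}(V))$, which parameterizes exactly those $\rho\in\mathrm{Rep}(A,V)$ factoring through $A/A\mathbf{w}A$, i.e., those with $\rho(\mathbf{w})=0$. Since $\mu(\rho)=(\mathbf{w})(\rho)=\rho(\mathbf{w})$ by construction, this locus coincides set-theoretically with $\mu^{-1}(0)$. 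To upgrade this to a scheme-theoretic equality I would verify that the ideal generated in $A_{\mathbf{d}}$ by the matrix entries $(\mathbf{w})_{ij}$ agrees with the ideal generated by entries of $A\mathbf{w}A$, via the multiplicative relation $(xy)_{ij}=\sum_u(x)_{iu}(y)_{uj}$ recalled in the proposition describing $A_{\mathbf{d}}$.

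For part (1), I would verify the classical moment map identity $d\langle\mu,X\rangle=\iota_{X^{\#}}\mathrm{Tr}(\omega)$ for every $X\in\mathfrak{gl}_{\mathbf{d}}$, where $X^{\#}_\rho(a)=[X,\rho(a)]$ is the vector field generated by conjugation. The central idea is to lift both sides of this identity to a single noncommutative identity: the defining equation $\iota_{\mathcal{H}_{\mathbf{w}}}\omega=d\mathbf{w}$ together with $\mathcal{H}_{\mathbf{w}}(a)=\Delta(a)$. Applying $\mathrm{Tr}$ to $d\mathbf{w}$ and pairing against $X$ yields $d\langle\mu,X\rangle$; meanwhile, Proposition \ref{proposition: Cartam formula and conctraction } (2)(iii) identifies the trace of $\iota_{\mathcal{H}_{\mathbf{w}}}\omega$ with $\iota_{\mathbf{m}_{*}\mathcal{H}_{\mathbf{w}}}\mathrm{Tr}(\omega)$. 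It then remains to check, using the entry-level formula $\Theta_{ij}(a_{uv})=\Theta'(a)_{uj}\Theta''(a)_{iv}$, that $\mathbf{m}_{*}\Delta$ generates precisely the family of infinitesimal $\mathrm{GL}_{\mathbf{d}}$-conjugation vector fields on $\mathrm{Rep}(A,\mathbf{d})$.

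The main obstacle will be this last identification. The formula $\Delta(a)=\sum_i(ae_i\otimes e_i-e_i\otimes e_ia)$ is designed so that its entry-wise evaluation produces commutators with elementary matrices, matching the tangent spaces to $\mathrm{GL}_{\mathbf{d}}$-orbits; making this explicit requires careful index tracking through the block decomposition $\phi$ introduced in the coordinate-ring conventions. Once this matching is in place, $\mathrm{GL}_{\mathbf{d}}$-equivariance of $\mu$ follows from the $R$-linearity of $\mathbf{w}$ and the fact that $\mathbf{w}\in\oplus_i e_i A e_i$ is fixed by the block-diagonal conjugation, completing the proof.
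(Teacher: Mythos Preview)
The paper does not give its own proof of this proposition: it is stated with a citation to \cite[Theorem 6.4.3]{CBEG2007} and no proof environment follows. There is therefore nothing in this paper to compare your proposal against.

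That said, your outline is the standard argument and matches the one in the cited reference. Part (2) is exactly the universal-property observation you describe, and the scheme-theoretic upgrade via the relation $(xy)_{ij}=\sum_u (x)_{iu}(y)_{uj}$ is correct. For part (1), the crux is precisely what you identify as the ``main obstacle'': showing that the double derivation $\Delta$ induces, entrywise, the infinitesimal $\mathrm{GL}_{\mathbf d}$-action by conjugation. Once that is checked, the moment map identity follows from $\iota_{\mathcal H_{\mathbf w}}\omega = d\mathbf w$ together with the compatibility of $\mathrm{Tr}$ with contractions recorded in Proposition~\ref{proposition: Cartam formula and conctraction }(2)(iii). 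Your sketch is sound.
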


\begin{proposition}[\cite{CBEG2007} Proposition 8.1.1.]
Let $\mathbb{K} \overline{Q}$ be the path algebra of doubled quiver $Q$. Then
the following element
\begin{displaymath}
	\mathbf{w}:= \sum_{a\in Q} (a a^{*} - a^{*} a)
\end{displaymath}
is a non-commutative moment map.
\end{proposition}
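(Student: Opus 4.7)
The plan is to verify the two defining conditions of a non-commutative moment map: first, that $\mathbf{w}\in\bigoplus_i e_iAe_i$, and second, that the (unique, by bi-symplecticness of $\omega$) double derivation $\mathcal{H}_{\mathbf{w}}$ satisfying $\iota_{\mathcal{H}_{\mathbf{w}}}\omega=d\mathbf{w}$ coincides with $\Delta$ on all of $A$. The first condition is immediate from the path-algebra combinatorics: for each $a\in Q_1$ one has $a\in e_{t(a)}Ae_{s(a)}$ and $a^*\in e_{s(a)}Ae_{t(a)}$, so $aa^*\in e_{t(a)}Ae_{t(a)}$ and $a^*a\in e_{s(a)}Ae_{s(a)}$, whence $\mathbf{w}$ is a sum of loops based at single vertices.

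For the second condition my approach is guess-and-verify: rather than constructing $\mathcal{H}_{\mathbf{w}}$ from scratch, I would directly check that $\Delta$ itself satisfies $\iota_\Delta\omega=d\mathbf{w}$ (up to the overall sign absorbed by the convention). Once this is established, the moment-map identity $\mathcal{H}_{\mathbf{w}}(a)=\Delta(a)$ holds tautologically for all $a\in A$, because Definition~\ref{definition:bi-symplectic}(2) forces $\mathcal{H}_{\mathbf{w}}=\Delta$ as double derivations.

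The core computation is therefore a direct application of the contraction formula from Proposition~\ref{proposition: Cartam formula and conctraction } to each summand $da\,da^*$ of $\omega$. Using
\[
\Delta(a)=a\otimes e_{s(a)}-e_{t(a)}\otimes a,\qquad \Delta(a^*)=a^*\otimes e_{t(a)}-e_{s(a)}\otimes a^*,
\]
I would expand the four Sweedler pieces of $i_\Delta(da\,da^*)\in\Omega^\bullet_R A\otimes\Omega^\bullet_R A$, apply the $\diamond$-twist, and simplify using the $R$-linearity of $d$ (so that $e_{t(a)}\,da = da\,e_{s(a)} = da$, and dually $e_{s(a)}\,da^* = da^*\,e_{t(a)} = da^*$). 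The four resulting monomials collapse to an expression of the form $\pm\bigl(da\cdot a^*+a\cdot da^*-da^*\cdot a-a^*\cdot da\bigr)$, which is $\pm d(aa^*-a^*a)$. Summing over $a\in Q_1$ yields $\iota_\Delta\omega=\pm d\mathbf{w}$, giving $\mathcal{H}_{\mathbf{w}}=\Delta$ as desired.

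The main obstacle is the sign and idempotent bookkeeping in this contraction calculation: the $\diamond$-operation treats $\Omega^0\otimes\Omega^1$ and $\Omega^1\otimes\Omega^0$ pieces asymmetrically, and the idempotents $e_{s(a)}$, $e_{t(a)}$ must be absorbed into or projected out of the neighbouring 1-forms using $R$-linearity of $d$. Once this mechanical accounting is carried out with care, the four monomials align exactly with the four terms of $d(aa^*-a^*a)$ and the identification $\mathcal{H}_{\mathbf{w}}=\Delta$ is forced on generators, hence on all of $A$ by the double-derivation property.
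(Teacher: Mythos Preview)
The paper does not supply its own proof of this proposition; it simply cites \cite[Proposition 8.1.1]{CBEG2007}. Your direct verification---checking $\mathbf{w}\in\bigoplus_i e_iAe_i$ from the vertex idempotents and then computing $\iota_\Delta\omega$ explicitly on each summand $da\,da^*$ to recognise $d\mathbf{w}$---is exactly the argument given in \cite{CBEG2007}, so your approach is both correct and standard.

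One remark on the sign bookkeeping you flag as the main obstacle: with the conventions \emph{as stated in this paper} ($\omega=\sum_a da\,da^*$ and $\Delta(a)=\sum_i ae_i\otimes e_i-e_i\otimes e_ia$), the computation you outline actually yields $\iota_\Delta\omega=-d\mathbf{w}$ rather than $+d\mathbf{w}$. This is a genuine convention mismatch between the present paper and \cite{CBEG2007} (where either the sign of $\Delta$ or of $\omega$ differs), not an error in your reasoning. Your hedged ``$\pm$'' is therefore warranted; in a polished write-up you would want to fix the conventions once and for all and then state the result without the ambiguity.
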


It is also direct to see that, if $\mathbf w$ is a non-commutative moment map,
then for any $\lambda\in R$, so is $\mathbf w-\lambda$.

\begin{corollary}
Suppose $Q$ is a finite quiver. Then for any $\lambda \in R$,
\begin{enumerate}
\item the following algebra,
$\Pi^{\lambda} Q :=\displaystyle \frac{\mathbb{K} \overline{Q}}{(\mathbf{w} - \lambda)}$,
called the deformed preprojective algebra,
is the non-commutative Hamiltonian reduction of $\mathbb{K} \overline{Q}$ at 
$\mathbf{w} - \lambda$. 
In particular, $\Pi^{\lambda} Q $ is equipped with an $H_{0}$-Poisson structure;

\item suppose $V$ is a representation of $Q$ with dimension vector 
$\mathbf{d} \in \mathbb{N} ^{Q_{0}}$, and 
$\sum_{i \in Q_{0}} d_{i} \lambda_{i} = 0$. Then
	$\mathrm{Rep}(\Pi^{\lambda} Q,V) = \mu^{-1}(\lambda)$.
\end{enumerate}
\end{corollary}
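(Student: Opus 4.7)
The plan is to obtain both statements as direct applications of the framework built up immediately before the corollary. The key step is to first verify that $\mathbf{w}-\lambda$ really is a non-commutative moment map for $\mathbb{K}\overline Q$; once this is in hand, parts (1) and (2) are just specializations of the general results for $\mathcal{R}(A,\mathbf{w}-\lambda)$ to $A=\mathbb{K}\overline Q$.

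For part (1), I would argue as follows. Since $\lambda=\sum_i\lambda_i e_i$ lies in $R\subset\mathbb{K}\overline Q$ and $d\colon A\to\Omega_R^1 A$ is $R$-linear with $d(1)=0$, one has $d\lambda=0$ in $\Omega_R^1\mathbb{K}\overline Q$. Because $\iota\omega\colon\mathbb{D}\mathrm{er}_R\mathbb{K}\overline Q\to\Omega_R^1\mathbb{K}\overline Q$ is an $A$-bimodule bijection by Definition \ref{definition:bi-symplectic}(2), the Hamiltonian double-derivation $\mathcal{H}_\lambda$ must vanish, so $\mathcal{H}_{\mathbf{w}-\lambda}=\mathcal{H}_{\mathbf{w}}$ and the moment map identity $\mathcal{H}_{\mathbf{w}-\lambda}(a)=\Delta(a)$ persists. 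Since both $\mathbf{w}$ and $\lambda$ lie in $\oplus_i e_iAe_i$, so does $\mathbf{w}-\lambda$. With $\mathbf{w}-\lambda$ established as a non-commutative moment map, Definition \ref{definition:non-commutative reduction} immediately identifies $\mathcal{R}(\mathbb{K}\overline Q,\mathbf{w}-\lambda)=\mathbb{K}\overline Q/(\mathbf{w}-\lambda)=\Pi^\lambda Q$, and Proposition \ref{proposition:noncommutatiuve reduction H^0 Poisson}(2) transports the Lie bracket on $(\mathbb{K}\overline Q)_\natural$ to the desired $H_0$-Poisson structure on $(\Pi^\lambda Q)_\natural$.

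For part (2), I would apply the preceding representation-theoretic proposition (the one stating $\mathrm{Rep}(\mathcal{R}(A,\mathbf{w}),V)=\mu^{-1}(0)$) to $A=\mathbb{K}\overline Q$ with the moment map replaced by $\mathbf{w}-\lambda$. Under the classical trace map from Definition \ref{definition:trace map}, the image of $\mathbf{w}-\lambda$ in $\mathbb{K}[\mathrm{Rep}(Q,\mathbf{d})]\otimes\mathrm{End}(V)$ equals $\mu-\hat\lambda$, where $\hat\lambda$ acts on each $V_i$ as the scalar operator $\lambda_i\,\mathrm{Id}_{V_i}$. Consequently $\mathrm{Rep}(\Pi^\lambda Q,V)=(\mu-\hat\lambda)^{-1}(0)=\mu^{-1}(\lambda)$, which is the claim.

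The only place that warrants care — and the spot I would double-check most closely — is the role of the hypothesis $\sum_i d_i\lambda_i=0$. Because $\rho(\mathbf{w})=\sum_{a\in Q}[\rho(a),\rho(a^*)]$ is a sum of commutators, the image of $\mu$ lies in the trace-zero hyperplane $\{\xi:\sum_i\mathrm{tr}(\xi_i)=0\}$ of $\oplus_i\mathrm{End}(V_i)$; for $\hat\lambda$ to sit inside this hyperplane one needs precisely $\sum_i d_i\lambda_i=0$. This matches the stated hypothesis and ensures the scheme-theoretic identity in (2) is non-vacuous. No new ideas are needed beyond a careful unpacking of the framework from Section \ref{sect:NCQR}.
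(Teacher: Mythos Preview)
Your proposal is correct and matches the paper's approach: the paper itself simply cites \cite[Propositions 6.8.1 and 7.11.1]{Van2008Double} and \cite[Theorem 6.4.3]{CBEG2007}, while just before the corollary it remarks that ``it is also direct to see that, if $\mathbf w$ is a non-commutative moment map, then for any $\lambda\in R$, so is $\mathbf w-\lambda$'' --- exactly the verification you carry out. Your argument is a faithful unpacking of that remark together with Definition~\ref{definition:non-commutative reduction}, Proposition~\ref{proposition:noncommutatiuve reduction H^0 Poisson}, and the proposition on $\mathrm{Rep}(\mathcal R(A,\mathbf w),V)=\mu^{-1}(0)$, so there is nothing to add.
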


\begin{proof}
The proof is given in \cite[Propositions 6.8.1 and 7.11.1]{Van2008Double};
see also \cite[Theorem 6.4.3]{CBEG2007}.
\end{proof}

From now on, the preprojective algebra $\Pi^{0}Q$ will be denoted by $\Pi Q$.

In summary, suppose $R$ is a commutative semisimple $\mathbb{K}$-algebra 
$\oplus_{i=1} ^{n} \mathbb{K}e_{i}$, $A$ is an $R$-algebra equipped with 
a bi-symplectic form $\omega \in \mathrm{DR}^{2}_{R}A$ and non-commutative 
momemt map $\mathbf{w}$. 
One constructs the non-commutative reduction 
$\mathcal{R}(A,\mathbf{w})$ of $A$ at $\mathbf{w}$. 
This procedure is denoted by
\begin{displaymath}
	\xymatrix{
	A \ar@{-->}[r] & \mathcal{R}(A,\mathbf{w})}.
\end{displaymath}

\subsection{Quantization of the necklace Lie algebra}

In this subsection, we go over the quantization of  
the non-commutative Poisson structure on $\mathbb{K}\overline Q$,
which is due to Schedler \cite{Sch2005}; the DG algebras case
has recently been studied by Chen and Eshmatov in \cite{CheEsh2020}.

By Definition \ref{definition:non-commutative reduction} and 
Proposition \ref{proposition:noncommutatiuve reduction H^0 Poisson},
there exists a Lie bracket on 
$$(\mathbb{K}\overline{Q})_{\natural}=\frac{\mathbb{K}\overline{Q}}{[\mathbb{K}\overline{Q},\mathbb{K}\overline{Q}]}.$$
More precisely, it is given as follows:
for any $a\in Q$,
set $\{a,a^{*}\}=1$
and  $\{a^{*},a\}=-1$;
set also
$\{f, g\}= 0$ 
for any $f, g\in \overline{Q} \ with\ f \neq g^{*}$.
For cyclic paths, $a_{1}a_{2}\cdots a_{k},b_{1}b_{2}\cdots b_{l}\in (\mathbb{K}\overline{Q})_{\natural}$
with $a_{i},b_{j}\in \overline{Q}$, we
have
\begin{align*}
&\{a_{1}a_{2}\cdots a_{k},b_{1}b_{2}\cdots b_{l}\}\\
&=
\sum_{1\leqslant i \leqslant k,\, 1\leqslant j \leqslant l}
\{a_{i},b_{j}\}t(a_{i+1})a_{i+1}a_{i+2}\cdots a_{k}a_{1}
\cdots a_{i-1} b_{j+1}\cdots b_{l}b_{1}\cdots b_{j-1};
\end{align*}
we also set $\{b, x\} = \{x, b\} = 0$ for any $b\in R$.

\begin{definition}
Suppose $Q$ is a finite quiver, $(\mathbb{K}\overline{Q})_{\natural}$ 
equipped with above bracket is called the {\it necklace Lie algebra} of $Q$.
\end{definition}

We next recall Schedler's construction of the quantization of necklace Lie algebras.
Let us first recall some notations.

\begin{notation}
Supppose $Q$ is a finite quiver and 
$R$ is the semisimple algebra 
$\oplus_{i \in Q_{0}} \mathbb{K}e_{i}$.
We recall the following notations introduced by Schedler in \cite{Sch2005}.
\begin{enumerate}
\item Let $AH:= \overline{Q} \times \mathbb{N}$, which is
called the space of arrows with heights. 

\item Let $E_{\overline{Q},h}$ be the $\mathbb{K}$-vector space spanned by $AH$.

\item Let $LH:= (T_{R}E_{\overline{Q},h})_{\natural}$, which is called
the generalized cyclic path algebra with heights.

\item Let $SLH[\hbar]:= SLH \otimes \mathbb{K} [\hbar]$, which
is the symmetric algebra generated by $LH$. 
\end{enumerate}
\end{notation}

In what follows 
all symmetric products, not just on $SLH[\hbar]$, are denoted by $\&$.

Consider the $\mathbb{K} [\hbar]$-submodule $SLH^{'}$ spanned by elements of the form
\begin{equation}
\label{general form}
\begin{split}
(a_{1,1},h_{1,1})\cdots (a_{1,l_{1}},h_{1,l_{1}}) \& (a_{2,1},h_{2,1})\cdots (a_{2,l_{2}},h_{2,l_{2}})
\\ \& \cdots  \& 
(a_{k,1},h_{k,1})\cdots (a_{k,l_{k}},h_{k,l_{k}}) 
\&v_{1} \& v_{2} \&\cdots \& v_{m}.
\end{split}
\end{equation}
where the $h_{i,j}$ are all distinct, $a_{i,j}\in \overline{Q}$ and
$v_{i}\in Q_{0}.$
Let $\tilde{A}$ be the quotient of
$SLH^{'}$ where two elements in $SLH^{'}$ are identified
if and only if
the order of heights in each element are preserved 
when we exchange each height in corresponding places.
 
Next, consider the $\mathbb{K}[h]$-submodule $\tilde{B}$ of $\tilde{A}$ generated by the following forms:
\begin{itemize}\label{quiver skein relations}
	\item $X-X^{'}_{i,j,i^{'},j^{'}} -X^{''}_{i,j,i^{'},j^{'}}$, \\[2mm]
	where $i \neq i^{'},h_{i,j}<h_{i^{'},j^{'}},\nexists (i^{''},j^{''})\ with\ h_{i,j} < h_{i^{''},j^{''}} < h_{i^{'},j^{'}}$;\vspace{2mm}
	\item $X-X^{'}_{i,j,i,j^{'}} - \hbar X^{''}_{i,j,i,j^{'}}$, \\[2mm]
	where $h_{i,j}<h_{i,j^{'}},\nexists (i^{''},j^{''})\ with\ h_{i,j} < h_{i^{''},j^{''}} < h_{i^{'},j^{'}}$
\end{itemize}
In the above, $X^{'}$ and $X^{''}$ are defined as follows:\\[2mm]
$X^{'}_{i,j,i^{'},j^{'}}$ is same as $X$ but with heights $h_{i,j}$ and $h_{i^{'},j^{'}}$ interchanged;
$X^{''}_{i,j,i^{'},j^{'}}$ replaces the components $(a_{i,1},h_{i,1})\cdots (a_{i,l_{i},h_{i,l_{i}}})\ and\ (a_{i^{'},1},h_{i^{'},1})\cdots (a_{i^{'},l_{i^{'}},h_{i^{'},l_{i^{'}}}})$
with the single component
\begin{displaymath}
\{a_{i,j},a_{i^{'},j^{'}}\}t(a_{i,j+1})(a_{i,j+1},h_{i,j+1})\cdots (a_{i,j-1},h_{i,j-1})(a_{i^{'},j^{'}+1},h_{i^{'},j^{'}+1})\cdots (a_{i^{'},j^{'}-1},h_{i^{'},j^{'}-1}).
\end{displaymath}
$X^{'}_{i,j,i,j^{'}}$ is similar to $X^{'}_{i,j,i^{'},j^{'}}$, but with heights $h_{i,j}\ and\ h_{i,j^{'}}$ interchanged;
$X^{''}_{i,j,i,j^{'}}$ is given by replacing the component $(a_{i,1},h_{i,1})\cdots (a_{i,l_{i},h_{i,l_{i}}})$ with 
\begin{align*}
\{(a_{i,j},a_{i,j^{'}})\} t(a_{i,j^{'}+1})(a_{i,j^{'}+1},h_{i,j^{'}+1})\cdots (a_{i,j-1},h_{i,j-1})\\
\&
t(a_{i,j+1})(a_{i,j+1},h_{i,j+1})\cdots (a_{i,j^{'}-1},h_{i,j^{'}-1}).
\end{align*} 

Let 
$\mathbf{N}(Q)_{\hbar}:= {\tilde{A}}/{\tilde{B}}$.
For any $X,Y \in \mathbf{N}(Q)_{\hbar}:= {\tilde{A}}/{\tilde{B}}$, 
the product of $X$ and $Y$, denoted by $X \ast Y$, is defined to be 
``placing $Y$ above $X$". More precisely,
take two elements $X,\ Y$ in the form \eqref{general form}.
Without loss of generality, we assume
the heights of the edges in $Y$ are all greater
than those in $X$. 
Then $X \ast Y$ is represented by $X\&Y$.

\begin{remark}\label{remark:general form2}
For an arbitrary element $x\in\mathbf{N}(Q)_{\hbar}$, 
$x$ can be chosen to be represented by
a linear combination of  
elements in the form \eqref{general form} 
whose heights all increase from left to right; 
that is, $h_{i,j}<h_{i',j'}$ whenever $(i, j)<(i', j')$
in the lexicographical order.
\end{remark}

\begin{proposition}[\cite{Sch2005} Section 3.6]\label{proposition:Schedler's quantization}
Suppose $Q$ is a finite quiver. Then the $\mathbb{K}[\hbar]$-algebra 
$\mathbf{N}(Q)_{\hbar}$ constructed as above quantizes the necklace Lie algbera 
$(\mathbb{K}\overline{Q})_{\natural}$.
\end{proposition}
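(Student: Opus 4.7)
The plan is to verify the two conditions that define a quantization of the necklace Lie algebra: first, that $\mathbf{N}(Q)_\hbar$ is a flat deformation whose classical limit $\mathbf{N}(Q)_\hbar / \hbar\mathbf{N}(Q)_\hbar$ is isomorphic to the symmetric algebra $\mathrm{Sym}((\mathbb{K}\overline{Q})_\natural)$ as a commutative $\mathbb{K}$-algebra; and second, that the induced first-order bracket $\hbar^{-1}[X,Y] \bmod \hbar$ on $(\mathbb{K}\overline{Q})_\natural$ recovers the necklace bracket defined just above the proposition.

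First I would fix a normal form for elements of $\mathbf{N}(Q)_\hbar$, sharpening Remark \ref{remark:general form2}. Using the two ``skein'' relations, every element can be rewritten as a $\mathbb{K}[\hbar]$-linear combination of monomials of the shape \eqref{general form} whose heights $h_{i,j}$ increase lexicographically along $(i,j)$. The first relation handles swaps of heights belonging to \emph{different} cyclic components (and is free of $\hbar$), while the second handles swaps within the \emph{same} cyclic component (and costs a factor $\hbar$). A rewriting procedure, together with an induction on the height-inversion count, shows that the normal forms span $\mathbf{N}(Q)_\hbar$ over $\mathbb{K}[\hbar]$.

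The main obstacle is proving that these normal forms are $\mathbb{K}[\hbar]$-linearly independent, i.e.\ flatness of $\mathbf{N}(Q)_\hbar$ over $\mathbb{K}[\hbar]$. I would run a Bergman-style diamond lemma: the two families of skein relations, oriented so as to reduce the number of lexicographic inversions in the height sequence, give a rewriting system, and one needs to check that every minimal ambiguity (overlapping pairs of swaps involving three edges at heights $h_{i,j}<h_{i',j'}<h_{i'',j''}$, possibly on the same or different cyclic components) resolves to the same element. The Jacobi-type identity built into the necklace bracket $\{-,-\}$ on $(\mathbb{K}\overline{Q})_\natural$ is precisely what makes these ambiguities confluent. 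Once confluence is checked, the normal forms form a $\mathbb{K}[\hbar]$-basis, so $\mathbf{N}(Q)_\hbar$ is free over $\mathbb{K}[\hbar]$, and the quotient $\mathbf{N}(Q)_\hbar/\hbar \mathbf{N}(Q)_\hbar$ is identified with the polynomial algebra on cyclic paths, i.e.\ $\mathrm{Sym}((\mathbb{K}\overline{Q})_\natural)$, since the intra-component relation collapses to a plain swap modulo $\hbar$ and the inter-component relation already allows free rearrangement.

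Finally, for the classical-limit bracket, I would take two cyclic paths $x=a_1\cdots a_k$ and $y=b_1\cdots b_l$ viewed as elements of $\mathbf{N}(Q)_\hbar$ at arbitrary (but disjoint) heights, and compute $x*y-y*x$. Commuting $y$ past $x$ amounts to swapping the $l$ heights of $y$ below the $k$ heights of $x$, one pair at a time; each inter-component swap contributes a bracket term $\{a_i,b_j\}$ cyclically rejoining the two components with no factor of $\hbar$, while the original stacked term cancels in $x*y-y*x$. Carefully bookkeeping, only the pairs $(a_i,b_j)$ with $b_j=a_i^*$ or $a_i=b_j^*$ contribute, and the result at order $\hbar^1$ is precisely Schedler's necklace bracket. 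Since the inter-component relation carries no $\hbar$ while the intra-component relation does, the commutator lies in $\hbar \mathbf{N}(Q)_\hbar$, and dividing by $\hbar$ and reducing modulo $\hbar$ yields the necklace bracket on $(\mathbb{K}\overline{Q})_\natural$, completing the verification that $\mathbf{N}(Q)_\hbar$ is a quantization in the stated sense.
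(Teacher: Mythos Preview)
The paper does not prove this proposition; it simply cites Schedler. Your outline, however, contains a genuine misconception about \emph{what} is being quantized, and this leads to an internally inconsistent computation at the end.

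You set out to show that $\mathbf{N}(Q)_\hbar/\hbar\mathbf{N}(Q)_\hbar \cong \mathrm{Sym}\big((\mathbb{K}\overline{Q})_\natural\big)$ as a Poisson algebra, with the necklace bracket arising as $\hbar^{-1}[\,\cdot\,,\,\cdot\,]\bmod\hbar$. That is not the sense of ``quantizes'' in force here. As the paper makes explicit just after Proposition~\ref{proposition:PBW}, Schedler's isomorphism is
\[
l:\ \mathcal{U}\big((\mathbb{K}\overline{Q})_\natural\big)\ \xrightarrow{\ \sim\ }\ \mathbf{N}(Q)_\hbar/\hbar\mathbf{N}(Q)_\hbar,
\]
so the classical limit is the \emph{universal enveloping algebra}, not the symmetric algebra. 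This is a quantization of the necklace Lie (bi)algebra in the Etingof--Kazhdan sense: $\mathbf{N}(Q)_\hbar$ is a flat $\mathbb{K}[\hbar]$-Hopf algebra deforming $\mathcal{U}(\mathfrak{g})$, with the $\hbar$-parameter controlling the \emph{cobracket} side, not the bracket. Proposition~\ref{proposition:PBW} gives $\mathbf{N}(Q)_\hbar\cong SL[\hbar]$ only as $\mathbb{K}[\hbar]$-\emph{modules}; the algebra structure on the $\hbar=0$ fibre is noncommutative.

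This is exactly why your last paragraph unravels. You correctly observe that the inter-component skein relation $X = X' + X''$ carries \emph{no} factor of $\hbar$; hence for single cycles $x,y$ the commutator $x*y - y*x$ equals a sum of joined cycles with coefficients $\{a_i,b_j\}$ and no $\hbar$ at all. That is the relation $x*y - y*x = \{x,y\}$ holding already in $\mathbf{N}(Q)_\hbar/\hbar\mathbf{N}(Q)_\hbar$, which is precisely the defining relation of $\mathcal{U}\big((\mathbb{K}\overline{Q})_\natural\big)$. Your subsequent claim that ``the commutator lies in $\hbar\mathbf{N}(Q)_\hbar$'' contradicts what you just computed, and the step ``dividing by $\hbar$'' is ill-defined. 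The diamond-lemma idea for the PBW/flatness statement is reasonable (and is essentially how Schedler argues), but the target of the argument should be the enveloping algebra, and the role of $\hbar$ is in the intra-component relation, deforming the coproduct rather than the product.
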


In fact, Schedler actually showed more, namely, 
$(\mathbb{K}\overline{Q})_{\natural}$ is not only a Lie algebra, 
but also a Lie bialgebra, and $\mathbf{N}(Q)_{\hbar}$ is a Hopf algebra 
which quantizes
this Lie bialgebra.

From now on, if an algebra $A$ is a quantization of another algebra $B$, 
we will denote quantization as follows:
\begin{displaymath}
	\xymatrix{
		B \ar@{~>}[r] & A}.
\end{displaymath}

\subsection{Non-commutative reduction at quantum level}
In this subsection, we introduce the non-commutative quantum reduction. 
In the previous subsection, we already obtain the quantization and reduction of 
non-commutative cotangent bundle of $\mathbb{K}Q$ for a finite quiver $Q$. 
To complete the non-commutative ``quantization commutes with reduction", 
we have to develop the
non-commutative reduction at the quantum level; such a construction should  
fit the Kontsevich-Rosenberg principle, that is to say, the non-commutative quantum 
reduction must induce the classical quantum reduction on the quiver representations.

First, we recall a result in \cite{Sch2005}.

\begin{proposition}[\cite{Sch2005} Corollary 4.2]\label{proposition:PBW}
Let $Q$ be a finite quiver and fix the ordering $\{ x_{i}\}$ of the set of cyclic 
words in $\overline{Q}$ and idempotents in $Q_{0}$. We have:
\begin{enumerate}
\item The projection $\tilde{A} \rightarrow SL[\hbar]$ 
obtained by forgetting the heights descends to an isomorphism 
$\mathbf{N}(Q)_{\hbar} \rightarrow SL[\hbar]$ of free $\mathbb{K}[\hbar]$-modules;

\item A basis of $\mathbf{N}(Q)_{\hbar}$ as a free 
$\mathbb{K}[\hbar]$-module is given by choosing one element of the form 
(\ref{general form}) which projects to each element of the basis 
$\{x_{i_{1}}\&\cdots \&x_{i_{k}} | \text{for any k } 
\in \mathbb{Z}_{\geq 0} \text{ and } i_{1} < i_{2} < \cdots  < i_{k}\}$.
\end{enumerate}
\end{proposition}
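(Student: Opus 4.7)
The plan is to establish this as a Poincaré--Birkhoff--Witt style theorem for the quantization $\mathbf{N}(Q)_\hbar$. The strategy has three parts: (i) use Remark \ref{remark:general form2} to observe that every class in $\mathbf{N}(Q)_\hbar$ admits a \emph{normal form} representative whose heights strictly increase in the lexicographic order; (ii) show this normal form is unique, so that forgetting heights on normal forms yields a well-defined $\mathbb{K}[\hbar]$-linear map $\mathbf{N}(Q)_\hbar \to SL[\hbar]$; and (iii) verify that this map is inverse to the obvious lift of the standard monomial basis of $SL[\hbar]$ and hence an isomorphism. Part (2) of the proposition then falls out of the proof of (1) by lifting the basis $\{x_{i_1}\&\cdots\&x_{i_k} : i_1<\cdots<i_k\}$ of $SL[\hbar]$ to normal-form representatives.

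For existence of a normal form, I would apply the two skein-type relations defining $\tilde{B}$ iteratively. Both have the shape $X \equiv X'_{i,j,i',j'} + (\text{scalar})\, X''_{i,j,i',j'}$, where $X'$ swaps a pair of adjacent heights and $X''$ is the bracket-correction term that merges two components into one, strictly decreasing the component count. Induction on the lexicographically ordered pair (number of components, number of out-of-order height pairs) terminates, because each height swap strictly decreases the second entry while each bracket correction strictly decreases the first. This already yields surjectivity of the prospective forget-heights map, and shows that the standard monomials in $SL[\hbar]$ lift to a spanning set of $\mathbf{N}(Q)_\hbar$.

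The main technical step is uniqueness, which I plan to prove via Bergman's Diamond Lemma applied to the reduction system consisting of nearest-neighbor height swaps. Confluence requires checking each critical pair that arises from three or more heights that can be re-sorted by two distinct sequences of elementary swaps. These confluence identities reduce to a noncommutative Yang--Baxter relation; in concrete terms, one verifies that the necklace bracket $\{-,-\}$ on $(\mathbb{K}\overline{Q})_{\natural}$, which produces the contraction terms, is compatible with the symmetric-product structure in the form of the Jacobi identity on $(\mathbb{K}\overline{Q})_{\natural}$. This confluence verification is the principal obstacle, since a case analysis is needed according to whether the three heights being reordered lie in one, two, or three distinct components of the ambient element (and in the one-component case, the $\hbar$-corrected relation produces extra scalar terms that must cancel). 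Once confluence is established, the inverse map $SL[\hbar]\to\mathbf{N}(Q)_\hbar$ is defined unambiguously by sending each basis element $x_{i_1}\&\cdots\&x_{i_k}$ to its unique ordered-heights lift, completing the proof of both statements.
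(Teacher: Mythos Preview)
The paper does not give its own proof of this proposition: it is stated with the citation ``[\cite{Sch2005} Corollary 4.2]'' and no proof environment follows. So there is nothing in the paper to compare your argument against --- the authors simply import the result from Schedler.

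Your sketch is a plausible route to an independent proof. Two comments. First, invoking Remark~\ref{remark:general form2} for the existence of a normal form is circular: that remark is itself an unproven assertion, essentially equivalent to the surjectivity half of what you want to show. You do go on to outline the genuine argument (induction on the pair (number of components, number of inversions)), so just drop the reference to the remark and run the induction directly. Second, your Diamond Lemma approach is not how Schedler actually proves this in \cite{Sch2005}. There, linear independence of the proposed basis is obtained by exhibiting the quantum trace map $\mathrm{Tr}^q\colon \mathbf{N}(Q)_\hbar \to \mathcal{D}_\hbar(\mathrm{Rep}(Q,\mathbf{d}))$ and checking that, for dimension vectors $\mathbf{d}$ large enough, the images of the candidate basis elements are linearly independent differential operators. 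This sidesteps the confluence analysis entirely. Your Diamond Lemma route is more self-contained (it does not require building the trace map first) but the confluence check you outline --- especially the single-component case with its $\hbar$-corrections --- is genuinely delicate and would need to be carried out in full; as written it is only a plan, not a proof.
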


Following the idea in \cite{Sch2005}, we fix the basis as following:
Fix the ordering $\{ x_{i}\}$ of the set of cyclic words in 
$\overline{Q}$ and idempotents in $Q_{0}$; choose 
a particular ``first element" of each $x_{i}$, 
and for any $x_{i1} \& \cdots  \& x_{i_{k}}$ with $i_{l}$ non-decreasing order,
heights can be assigned by starting with the first element to the last.
We simply denote the lifting of $x_{i_{1}}\&\cdots  \& x_{i_{k}}$ by 
$\widehat{x_{i_{1}}} \ast \cdots  \ast \widehat{x_{i_{k}}}$.
Sometimes, for a long word like $x_{1} x_{2} x_{3}\cdots $, 
we also denote the lifting of  $x_{1} x_{2} x_{3}\cdots $
by $(x_{1} x_{2} x_{3}\cdots )^{\widehat{ }}$.
Notice that $\&$ in $x_{i_{1}}\&\cdots  \& x_{i_{k}}$ is
the symmetric product in the symmetric algebra $SL[\hbar]$.

In fact, in \cite{Sch2005}, Schedler constructed an isomorphism of $\mathbb{K}$-modules,
$$l: \mathcal{U}((\mathbb{K}\overline{Q})_{\natural}) 
\rightarrow \frac{\mathbf{N}(Q)_{\hbar}}{\hbar \mathbf{N}(Q)_{\hbar}}.$$
For an arbitrary element which is expressed as combination of ordered basis 
$\sum_{i} x_{i,1}\boxtimes x_{i,2} \boxtimes\cdots  \boxtimes x_{i,r} 
\in \mathcal{U}((\mathbb{K}\overline{Q})_{\natural})$, 
$l( \sum_{i} x_{i,1}\boxtimes x_{i,2} \boxtimes\cdots  \boxtimes x_{i,r} )$ 
is represented by $\sum_{i} (\widehat{x_{i,1}} \ast \widehat{x_{i,2}} \ast \cdots \ast \widehat{x_{i,r_{i}}} ).$

\begin{definition}
Let $Q$ be a finite quiver, and let $\mathbf{N}(Q)_{\hbar}$ be the quantized necklace Lie algebra. 
The {\it non-commutative quantum moment map} is defined to be the element 
in $\mathbf{N}(Q)_{\hbar}$ represented by
\begin{displaymath}
	\widehat{\mathbf{w}}:= \sum_{a \in Q_{1}} (a,1)(a^{*},2) - (a^{*},1)(a,2).
\end{displaymath}
\end{definition}

Proposition 
\ref{proposition:noncom quantum moment map gives quantum moemnt map} 
below
justifies the above definition from the Kontsevich-Rosenberg principle point of view.

\begin{definition}\label{def:NCquantumred}
Suppose $Q$ is a finite quiver and
$\mathbf{N}(Q)_{\hbar}$ is the quantized necklace Lie algebra. 
The {\it non-commutative quantum reduction} of $\mathbf{N}(Q)_{\hbar}$ 
at $\widehat{\mathbf{w}}$ is defined to be
\begin{displaymath}
\mathcal{R}_{q}(\mathbf{N}(Q)_{\hbar},\widehat{\mathbf{w}})
 := \frac{\mathbf{N}(Q)_{\hbar}}
 {\mathbf{N}(Q)_{\hbar}\widehat{(\mathbb{K}\overline{Q}\mathbf{w})_{\natural}}\mathbf{N}(Q)_{\hbar}},
\end{displaymath}
where $\widehat{(\mathbb{K}\overline{Q}\mathbf{w})_{\natural}}$ are liftings of elements in 
$(\mathbb{K}\overline{Q}\mathbf{w})_{\natural}$, and
$\mathbf{N}(Q)_{\hbar}\widehat{(\mathbb{K}\overline{Q}\mathbf{w})_{\natural}}\mathbf{N}(Q)_{\hbar}$ 
is the two-side ideal generated by $\widehat{(\mathbb{K}\overline{Q}\mathbf{w})_{\natural}}$.
\end{definition}

Now we fix some notations.
Given a Lie algebra $\mathfrak{g}$, denote its universal 
enveloping algebra by $\mathcal{U} \mathfrak{g}$. 
Define a map 
\begin{displaymath}
v:\, \mathcal{U}((\Pi Q)_{\natural})\, \rightarrow\, 
\frac{\mathcal{R}_{q}(\mathbf{N}(Q)_{\hbar},\widehat{\mathbf{w}})}
{\hbar \mathcal{R}_{q}(\mathbf{N}(Q)_{\hbar},\widehat{\mathbf{w}})}
\end{displaymath}
by\begin{displaymath}
\ \sum_{i}c_{i}pr(x_{i1})\boxtimes \cdots  
\boxtimes pr(x_{i k_{i}})\, \mapsto\, \sum_{i}c_{i} 
[ \overline{\widehat{x_{i1}} \ast \cdots \ast \widehat{x_{i k_{i}}} } ],
\end{displaymath}
where 
$pr:(\mathbb{K}\overline{Q})_{\natural} \rightarrow (\Pi Q)_{\natural}$ is the canonical homomorphism induced from the projection of algebras 
$\mathbb{K}\overline{Q} \rightarrow \Pi Q$,
$\overline{\widehat{x_{i1}} \ast \cdots \ast \widehat{x_{i k_{i}}} }$ 
is the class in 
$\mathcal{R}_{q}(\mathbf{N}(Q)_{\hbar},\widehat{\mathbf{w}})$ 
represented by $\widehat{x_{i1}} \ast \cdots \ast \widehat{x_{i k_{i}}}$,
$[ \overline{\widehat{x_{i1}} \ast \cdots \ast \widehat{x_{i k_{i}}} } ]$
is the class in $\displaystyle\frac{\mathcal{R}_{q}(\mathbf{N}
(Q)_{\hbar},\widehat{\mathbf{w}})}{\hbar \mathcal{R}_{q}
(\mathbf{N}(Q)_{\hbar},\widehat{\mathbf{w}})}$ 
represented by $\overline{\widehat{x_{i1}}\ast \cdots \ast \widehat{x_{i k_{i}}}}$. 
Once again, readers are reminded that 
$\widehat{x_{i1}} \ast \cdots \ast \widehat{x_{i k_{i}}}$ is the product of 
$\widehat{x_{i1}},\cdots ,\widehat{x_{i k_{i}}}$ in $\mathbb{N}(Q)_{\hbar}$, and $\boxtimes$ 
is the  multiplication in $\mathcal{U}((\Pi Q)_{\natural})$.

\begin{lemma}\label{lemma: w is stable}
Let $Q$ be a finite quiver and 
$\mathbf{w} = \sum_{a \in Q_{1}} [a,a^{*}]$. 
For any cyclic path $x \in (\mathbb{K} \overline{Q})_{\natural}$ 
and any element 
$\sum_{i}y_{i} \mathbf{w} \in (\mathbb{K} \overline{Q} \mathbf{w})_{\natural}$, 
we have
\begin{displaymath}
		\{x, \sum_{i} y_{i}\mathbf{w}\} \in (\mathbb{K} \overline{Q} \mathbf{w})_{\natural}.
\end{displaymath}
\end{lemma}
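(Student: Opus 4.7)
The plan is to deduce the lemma directly from Proposition~\ref{proposition:noncommutatiuve reduction H^0 Poisson}. That proposition tells us two things: first, for every cyclic class $f\in A_\natural:=(\mathbb{K}\overline{Q})_\natural$ the Hamiltonian derivation $\theta_f:=H_\omega(df)\in\mathrm{Der}_R A$ preserves the two-sided ideal $A\mathbf{w}A$; and second, the necklace Lie bracket on $A_\natural$ is computed on any lift $b\in A$ of $\overline{b}\in A_\natural$ by the formula $\{f,\overline{b}\}=\overline{\theta_f(b)}$. This immediately packages the lemma as a compatibility between the derivation $\theta_x$ and the ideal $A\mathbf{w}A$.

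Given this, I would proceed as follows. Pick the representative $\sum_i y_i\mathbf{w}\in A\mathbf{w}$ of the given class and apply $\theta_x$ using the Leibniz rule:
\[
\theta_x\Big(\sum_i y_i\mathbf{w}\Big) \;=\; \sum_i\bigl(\theta_x(y_i)\,\mathbf{w}\;+\;y_i\,\theta_x(\mathbf{w})\bigr).
\]
The first summand lies in $A\mathbf{w}A$ by inspection. For the second, the key point is that $\mathbf{w}\in A\mathbf{w}A$, so by Proposition~\ref{proposition:noncommutatiuve reduction H^0 Poisson}(1) also $\theta_x(\mathbf{w})\in A\mathbf{w}A$. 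Hence $\theta_x(\sum_i y_i\mathbf{w})\in A\mathbf{w}A$, and therefore $\{x,\sum_i y_i\mathbf{w}\}$ lies in $(A\mathbf{w}A)_\natural$.

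The only remaining step is to identify $(A\mathbf{w}A)_\natural$ with the subspace $(\mathbb{K}\overline{Q}\mathbf{w})_\natural=(A\mathbf{w})_\natural$ appearing in the statement. This is a one-line verification using cyclicity: for any $a,b\in A$ one has $a\mathbf{w}b-ba\mathbf{w}=[a\mathbf{w},b]\in[A,A]$, so every element of $A\mathbf{w}A$ is cyclically equivalent to an element of $A\mathbf{w}$; the reverse inclusion is trivial. Combined with the previous step, this yields $\{x,\sum_i y_i\mathbf{w}\}\in(\mathbb{K}\overline{Q}\mathbf{w})_\natural$, as desired. I do not anticipate any genuine obstacle here: the lemma is essentially a restatement of the stability of $A\mathbf{w}A$ under Hamiltonian vector fields, once one recognises that in the cyclic quotient $A\mathbf{w}A$ and $A\mathbf{w}$ define the same subspace. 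An alternative direct attack using the explicit necklace bracket formula against each generator $[a,a^*]$ of $\mathbf{w}$ is possible but combinatorially heavier and unnecessary.
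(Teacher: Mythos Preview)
Your proposal is correct and follows the same route as the paper: the paper's proof simply cites \cite[Proposition~4.4.3]{CBEG2007}, which is precisely the content of Proposition~\ref{proposition:noncommutatiuve reduction H^0 Poisson}(1) that you invoke. You have merely unpacked the citation by making explicit the Leibniz-rule computation and the identification $(A\mathbf{w}A)_\natural=(A\mathbf{w})_\natural$ via cyclicity.
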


\begin{proof}
See \cite[Proposition 4.4.3]{CBEG2007}.
\end{proof}

\begin{theorem}\label{theorem:R(A) quantizes preproj algebra}
Suppose $Q$ is a finite quiver and $\mathcal{R}_{q}(\mathbf{N}(Q)_{\hbar},
\widehat{\mathbf{w}})$ 
is the non-commutative quantum reduction of $\mathbf{N}(Q)_{\hbar}$. Then
\begin{displaymath}
v:\mathcal{U}( (\Pi Q)_{\natural} )\cong
\frac{\mathcal{R}_{q}(\mathbf{N}(Q)_{\hbar},
\widehat{\mathbf{w}})}{\hbar \mathcal{R}_{q}(\mathbf{N}(Q)_{\hbar},
\widehat{\mathbf{w}})}
\end{displaymath}
as $\mathbb{K}$-modules.
\end{theorem}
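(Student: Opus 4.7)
The plan is to realize $v$ as the quotient of Schedler's PBW-type isomorphism $l: \mathcal{U}((\mathbb{K}\overline{Q})_\natural) \xrightarrow{\cong} \mathbf{N}(Q)_\hbar/\hbar\mathbf{N}(Q)_\hbar$ from Proposition \ref{proposition:PBW}. Since $\mathbf{N}(Q)_\hbar$ is the Hopf-algebra quantization of the necklace Lie bialgebra, $l$ is in fact an isomorphism of associative $\mathbb{K}$-algebras, and by construction the map $v$ is obtained by applying $l$ to PBW representatives and then projecting to the quotients. Hence the task reduces to identifying the ideals modded out on the two sides and checking they correspond under $l$.

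On the classical side, the two-sided ideal $(\mathbf{w}) \subset \mathbb{K}\overline{Q}$ is stable under $R$, so the projection $\mathbb{K}\overline{Q} \twoheadrightarrow \Pi Q$ descends to a surjection $(\mathbb{K}\overline{Q})_\natural \twoheadrightarrow (\Pi Q)_\natural$ whose kernel is the image of $\mathbb{K}\overline{Q}\mathbf{w}$, namely $(\mathbb{K}\overline{Q}\mathbf{w})_\natural$. By Lemma \ref{lemma: w is stable}, this kernel is a Lie ideal in the necklace Lie algebra, so $(\Pi Q)_\natural \cong (\mathbb{K}\overline{Q})_\natural/(\mathbb{K}\overline{Q}\mathbf{w})_\natural$ as Lie algebras and therefore
\[\mathcal{U}((\Pi Q)_\natural)\cong \mathcal{U}((\mathbb{K}\overline{Q})_\natural)/J,\]
where $J$ is the two-sided associative ideal generated by the image of $(\mathbb{K}\overline{Q}\mathbf{w})_\natural$. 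On the quantum side, unpacking Definition \ref{def:NCquantumred} gives
\[\mathcal{R}_q(\mathbf{N}(Q)_\hbar, \widehat{\mathbf{w}})/\hbar\mathcal{R}_q(\mathbf{N}(Q)_\hbar, \widehat{\mathbf{w}}) \cong \bigl(\mathbf{N}(Q)_\hbar/\hbar\mathbf{N}(Q)_\hbar\bigr)/\overline{I},\]
where $\overline{I}$ is the two-sided ideal generated by the classes mod $\hbar$ of the liftings $\widehat{(\mathbb{K}\overline{Q}\mathbf{w})_\natural}$; this is well-defined because two liftings of a given cyclic word differ by an $\hbar$-multiple, thanks to the skein-type relations imposed by $\tilde{B}$.

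The matching step is now immediate: $l$ sends each generator $\sum_i y_i \mathbf{w} \in (\mathbb{K}\overline{Q}\mathbf{w})_\natural \subset \mathcal{U}((\mathbb{K}\overline{Q})_\natural)$ of $J$ to the class modulo $\hbar$ of its lifting $\widehat{\sum_i y_i\mathbf{w}}$, which is a generator of $\overline{I}$. Since $l$ is an algebra isomorphism, it carries the two-sided ideal $J$ bijectively onto $\overline{I}$, hence descends to a $\mathbb{K}$-module isomorphism of the quotients. Inspection of the formula defining $v$ in the paragraph preceding the theorem shows that this induced map coincides with $v$, yielding the claim.

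The main obstacle I anticipate is controlling the ideal $\overline{I}$ precisely: the liftings $\widehat{x}$ depend on a choice of a \emph{first element} and a height assignment, and their class in $\mathcal{R}_q/\hbar\mathcal{R}_q$ must be shown to be independent of these choices. This reduces to verifying that the defining relations of $\tilde{B}$, which carry $\hbar$-corrections, trivialize modulo $\hbar$ to the symmetric commutativity, so that two representatives of the same cyclic word differ by a combination of $\hbar$-terms and elements already absorbed into $\mathbf{N}(Q)_\hbar\widehat{(\mathbb{K}\overline{Q}\mathbf{w})_\natural}\mathbf{N}(Q)_\hbar$. Once this coherence of the choices is established, the remaining steps are essentially bookkeeping through Schedler's PBW theorem.
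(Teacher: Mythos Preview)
Your proposal is correct and follows essentially the same route as the paper: both arguments pass through Schedler's isomorphism $l:\mathcal{U}((\mathbb{K}\overline{Q})_\natural)\to \mathbf{N}(Q)_\hbar/\hbar\mathbf{N}(Q)_\hbar$ and both rely on Lemma~\ref{lemma: w is stable} to control the ideal generated by $\mathbf{w}$.

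The only difference is in packaging. The paper verifies that $v$ is well-defined, surjective, and injective separately; for injectivity it rewrites a preimage in the ideal $\mathbf{N}(Q)_\hbar\widehat{(\mathbb{K}\overline{Q}\mathbf{w})_\natural}\mathbf{N}(Q)_\hbar$ in terms of the PBW basis, invokes Lemma~\ref{lemma: w is stable} to see that each basis term still carries a factor from $\widehat{(\mathbb{K}\overline{Q}\mathbf{w})_\natural}$, and then applies $pr$. You instead observe from the outset that $l$ is an \emph{algebra} isomorphism (this is implicit in Schedler's result that $\mathbf{N}(Q)_\hbar$ is a Hopf-algebra quantization of the necklace Lie bialgebra, though the paper records $l$ only as a $\mathbb{K}$-module isomorphism), identify the two-sided ideals $J$ and $\overline{I}$ on the two sides, and check that $l$ matches their generators. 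Your argument is slightly more conceptual and avoids the explicit PBW-rewriting step; the paper's argument is more self-contained in that it does not appeal to the multiplicativity of $l$ beyond what is visible in its description on ordered monomials. The ``main obstacle'' you flag (independence of lifts modulo $\hbar$) is exactly what the skein relations in $\tilde{B}$ guarantee, and the paper handles it implicitly in the same way.
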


\begin{proof}
First, we check $v$ is well-defined. We choose an arbitrary 
element $\sum_{i}c_{i}pr(x_{i1})\boxtimes \cdots  \boxtimes pr(x_{i k_{i}})$,
and since $pr$ is not an isomorphism, we need to make sure 
the value of $v$ is independent of $\ker pr$. 
For each $i,j$, $y_{ij} - x_{ij} \in  \ker pr = 
(\mathbb{K}\overline{Q}\mathbf{w})_{\natural}$, by definition of 
$\mathcal{R}_{q}(\mathbf{N}(Q)_{\hbar},\mathbf{w})$, 
we have $\overline{\widehat{x_{i1}}\cdots \widehat{x_{i k_{i}}} } 
=\overline{\widehat{y_{i1}} \ast \cdots \ast \widehat{y_{i k_{i}}} }$ for each $i$.
Thus $v(\sum_{i}c_{i}pr(x_{i1})\boxtimes\cdots\boxtimes pr(x_{i k_{i}})) 
= v \Big(\sum_{i}c_{i}pr(y_{i1})\boxtimes \cdots  \boxtimes pr(y_{i k_{i}}) \Big)$.

Second, we check $v$ is surjective. 
On one hand, we have a surjective morphism 
$$
\frac{\mathbf{N}(Q)_{\hbar}}
{\hbar \mathbf{N}(Q)_{\hbar}} \rightarrow \frac{\mathcal{R}_{q}
(\mathbf{N}(Q)_{\hbar},\widehat{\mathbf{w}})}{\hbar \mathcal{R}_{q}
(\mathbf{N}(Q)_{\hbar},\widehat{\mathbf{w}})}
$$ 
which is induced from 
canonical surjective morphism $\mathbf{N}(Q)_{\hbar} 
\rightarrow \mathcal{R}_{q}(\mathbf{N}(Q)_{\hbar},\widehat{\mathbf{w}})$. 
On the other hand, by Proposition \ref{proposition:Schedler's quantization} 
we know $\mathcal{U}((\mathbb{K}\overline{Q})_{\natural}) \cong 
\displaystyle\frac{\mathbf{N}(Q)_{\hbar}}{\hbar \mathbf{N}(Q)_{\hbar}}$.
Combine these two morphisms, we have a commutative diagram
\begin{displaymath}
\xymatrix{
\displaystyle\frac{\mathbf{N}(Q)_{\hbar}}{\hbar \mathbf{N}(Q)_{\hbar}} \ar@{->>}[d] 
& \mathcal{U}((\mathbb{K}\overline{Q})_{\natural}) \ar[l]_-{\cong} \ar@{->>}^{pr}[d] \\
\displaystyle\frac{\mathcal{R}_{q}
	(\mathbf{N}(Q)_{\hbar},\widehat{\mathbf{w}})}
{\hbar\mathcal{R}_{q}
	(\mathbf{N}(Q)_{\hbar},\widehat{\mathbf{w}})} & \mathcal{U}(\Pi Q) \ar[l]_{v},
}
\end{displaymath}
which implies that $v$ is surjective.

Finally, we check $v$ is injective. Suppose we have an arbitrary element 
$\sum_{i}c_{i}pr(x_{i1})\boxtimes\cdots\boxtimes pr(x_{i k_{i}})$
such that $v \Big(\sum_{i}c_{i}pr(x_{i1})\boxtimes \cdots  \boxtimes pr(x_{i k_{i}}) \Big) 
= \sum_{i}c_{i} [ \overline{\widehat{x_{i1}} \ast \cdots \ast \widehat{x_{i k_{i}}} } ] = 0$,
we need to show this element is zero.

There are only two cases to check. The first case is that
$\sum_{i}c_{i}  \overline{\widehat{x_{i1}} \ast \cdots \ast \widehat{x_{i k_{i}}} } = 0$ in 
$\mathcal{R}_{q}(\mathbf{N}(Q)_{\hbar},\widehat{\mathbf{w}})$; 
that is to say, $\sum_{i}c_{i} \widehat{x_{i1}} \ast \cdots \ast 
\widehat{x_{i k_{i}}} \in {\mathbf{N}(Q)_{\hbar}\widehat{(\mathbb{K}\overline{Q}\mathbf{w})_{\natural}}\mathbf{N}(Q)_{\hbar}}$.
Then we can rewrite this element as 
$$\sum_{i}c_{i} \widehat{x_{i1}} \ast \cdots \ast \widehat{x_{i k_{i}}} 
= \sum_{i} \widehat{\alpha_{i,1}} \ast \cdots \ast \widehat{\alpha_{i,r_{i}}} 
\ast {\Big(\sum_{b_{1}\cdots b_{s}}b	_{1}\cdots b_{s} \sum_{a \in Q_{1}}
 [a,a^{*}] \Big)^{\widehat{ }}}\ast 
 \ \widehat{\beta_{i,1}} \ast \cdots \ast \widehat{\beta_{i,t_{i}}},$$
here all $\alpha_{i,j},\ \beta_{i,j}$ are elements in $(\mathbb{K}\overline{Q})_{\natural}$, 
and $b_{1}\cdots b_{s}$ is a cyclic path in $\overline{Q}$. 
Here, we use $[\sum_{i}c_{i} \widehat{x_{i1}} \ast \cdots \ast \widehat{x_{i k_{i}}}]$ to 
represent the equivalence class in 
$\displaystyle \frac{\mathbf{N}(Q)_{\hbar}}{\hbar \mathbf{N}(Q)_{\hbar}}$.
Since the lifting 
$$l:\mathcal{U}((\mathbb{K}\overline{Q})_{\natural}) 
\rightarrow \frac{\mathbf{N}(Q)_{\hbar}}{\hbar \mathbf{N}(Q)_{\hbar}}$$ 
is an isomorphism, we have that
\begin{align*}
&l^{-1} \Big(\big[\sum_{i}c_{i} \widehat{x_{i1}} \ast \cdots \ast \widehat{x_{i k_{i}}}\big] \Big) \\
& = l^{-1}\Big(
\big[\sum_{i} \widehat{\alpha_{i,1}} \ast \cdots \ast \widehat{\alpha_{i,r_{i}}} 
{\ast \big( \sum_{b_{1}	\cdots b_{s}}b_{1}\cdots b_{s} \sum_{a \in Q_{1}} [a,a^{*}] 
\big)^{\widehat{ }}}\ast \ \widehat{\beta_{i,1}} \ast \cdots \ast \widehat{\beta_{i,t_{i}}}\big] \Big) 
\end{align*}
implies
\begin{align*}	
&\sum_{i}c_{i} {x_{i1}}\boxtimes\cdots \boxtimes{x_{i k_{i}}} \\
& = \sum_{i} l^{-1}[\widehat{\alpha_{i,1}} \ast \cdots \ast \widehat{\alpha_{i,r_{i}}} 
\ast{\big(\sum_{b_{1}\cdots b_{s}} b_{1}\cdots b_{s} \sum_{a \in Q_{1}} [a,a^{*}]\big)^{\widehat{ }}} 
\ast \  \widehat{\beta_{i, 1}} \ast \cdots \ast \widehat{\beta_{i,t_{i}}}].
\end{align*}
If we apply $pr$ to the left hand side, we get
$\sum_{i}c_{i} pr(x_{i 1}) \boxtimes \cdots  \boxtimes pr(x_{i k_{i}})$. 
To show this is zero we need to show the right hand side is zero.

In fact, by Proposition \ref{proposition:PBW}, 
we can rewrite $$\widehat{\alpha_{i,1}} \ast \cdots \ast \widehat{\alpha_{i,r_{i}}} 
\ast{\big(\sum_{b_{1}\cdots b_{s}} b_{1}\cdots b_{s} \sum_{a \in Q_{1}} [a,a^{*}]\big)^{\widehat{ }}} 
\ast \  \widehat{\beta_{i, 1}} \ast \cdots \ast \widehat{\beta_{i,t_{i}}}$$
to be the linear combination of the basis, 
and due to Lemma \ref{lemma: w is stable},
we see that each summand must contain an element in 
$\widehat{(\mathbb{K}\overline{Q} \mathbf{w})_{\natural}}$.
Thus by definition of $pr$, the right hand side is zero, 
and hence we do have 
$\sum_{i}c_{i} pr(x_{i 1}) \boxtimes \cdots  \boxtimes pr(x_{i k_{i}}) = 0$.

For the second case, that is $$\sum_{i}c_{i} 
\overline{\widehat{x_{i1}} \ast \cdots \ast \widehat{x_{i k_{i}}} } 
\neq 0\quad\mbox{but}\quad
[\sum_{i}c_{i}  \overline{\widehat{x_{i1}} \ast \cdots \ast \widehat{x_{i k_{i}}} }] = 0.$$ 
This means 
$\sum_{i}c_{i}  \overline{\widehat{x_{i1}} \ast \cdots \ast \widehat{x_{i k_{i}}} } 
\in \hbar \mathcal{R}_{q}(\mathbb{N}(Q),\widehat{\mathbf{w}})$, 
which 
contradicts to the fact 
$\sum_{i}c_{i}  \overline{\widehat{x_{i1}} \ast \cdots \ast \widehat{x_{i k_{i}}} }$ 
does not contain $\hbar$.

Combining the above three facts, we get
an isomorphism \[
\mathcal{U}( (\Pi Q)_{\natural} )\cong
\frac{\mathcal{R}_{q}(\mathbf{N}(Q)_{\hbar},
\widehat{\mathbf{w}})}{\hbar \mathcal{R}_{q}
(\mathbf{N}(Q)_{\hbar},\widehat{\mathbf{w}})}  .\qedhere\]
\end{proof}

In summary, 
we construct in Definition \ref{def:NCquantumred} 
the non-commutative quantum reduction 
$\mathcal{R}_{q}(\mathbf{N}(Q)_{\hbar},\widehat{\mathbf{w}})$
of $\mathbf{N}(Q)_{\hbar}$,
and prove in Theorem \ref{theorem:R(A) quantizes preproj algebra}
that this reduction 
is the quantization of the corresponding preprojective algebra. 
In other words, we now have the non-commutative version of 
``quantization commutes with reduction" for quiver algebras, 
which is summarized as follows.

\begin{theorem}[Noncommutative quantization commutes with reduction]\label{thm:NCQR}
Suppose $Q$ is a finite quiver. Then we have the following commutative diagram:
\begin{equation}\label{eq:quantizationcommwithreduction}\begin{split}
\xymatrixcolsep{4pc}
\xymatrix{
\mathbf{N}(Q)_{\hbar} \ar@{-->}[r]  
& \mathcal{R}_{q}(\mathbf{N}(Q)_{\hbar}, \widehat{\mathbf{w}}) \\
(\mathbb{K}\overline{Q})_{\natural}	\ar@{-->}[r]
\ar@{~>}[u]
& (\Pi Q) _{\natural} .\ar@{~>}[u]
}
\end{split}
\end{equation}
\end{theorem}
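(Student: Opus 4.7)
The plan is to assemble the earlier results into the asserted commutative square; most of the work is already encoded in Theorem \ref{theorem:R(A) quantizes preproj algebra}, so the proof will be essentially an identification of the four arrows in \eqref{eq:quantizationcommwithreduction}. The left vertical arrow is Schedler's quantization from Proposition \ref{proposition:Schedler's quantization}, realized concretely by the isomorphism $l: \mathcal{U}((\mathbb{K}\overline{Q})_\natural) \xrightarrow{\sim} \mathbf{N}(Q)_\hbar/\hbar \mathbf{N}(Q)_\hbar$. The bottom horizontal arrow is the canonical projection $pr: (\mathbb{K}\overline Q)_\natural \to (\Pi Q)_\natural$ induced by the quotient $\mathbb{K}\overline{Q} \twoheadrightarrow \mathbb{K}\overline{Q}/(\mathbf{w}) = \Pi Q$. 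The top horizontal is the canonical surjection $\mathbf{N}(Q)_\hbar \twoheadrightarrow \mathcal{R}_q(\mathbf{N}(Q)_\hbar, \widehat{\mathbf{w}})$ built into Definition \ref{def:NCquantumred}. Only the right vertical arrow carries nontrivial content: it must be exhibited as a quantization of $(\Pi Q)_\natural$.

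This last point, however, is precisely Theorem \ref{theorem:R(A) quantizes preproj algebra}, which produces the isomorphism
\[
v: \mathcal{U}((\Pi Q)_\natural) \xrightarrow{\ \sim\ } \mathcal{R}_q(\mathbf{N}(Q)_\hbar, \widehat{\mathbf{w}})/\hbar\, \mathcal{R}_q(\mathbf{N}(Q)_\hbar, \widehat{\mathbf{w}}).
\]
Thus the right vertical arrow genuinely realizes a quantization of the bottom-right object, and the two vertical arrows of the diagram are indeed quantizations in the sense required.

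It remains to verify commutativity, which I expect to follow by tracing the very construction of $v$. Under $l$, an ordered product $x_{i_1} \boxtimes \cdots \boxtimes x_{i_k} \in \mathcal{U}((\mathbb{K}\overline Q)_\natural)$ is sent to the class $[\widehat{x_{i_1}} \ast \cdots \ast \widehat{x_{i_k}}]$ in $\mathbf{N}(Q)_\hbar/\hbar\mathbf{N}(Q)_\hbar$; applying the top arrow and then using $v^{-1}$ produces, by the definition of $v$, the element $pr(x_{i_1}) \boxtimes \cdots \boxtimes pr(x_{i_k}) \in \mathcal{U}((\Pi Q)_\natural)$. This is exactly the image one obtains by first applying the bottom arrow $pr$ and then the right vertical quantization. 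Hence the two routes agree and the square commutes.

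The main (mild) obstacle I anticipate is checking that the ideal $\mathbf{N}(Q)_\hbar\,\widehat{(\mathbb{K}\overline Q \mathbf{w})_\natural}\,\mathbf{N}(Q)_\hbar$ used in Definition \ref{def:NCquantumred} is independent of the chosen lifts modulo $\hbar$, so that $v$ really is well-defined and compatible with $pr$. This is precisely the use of Lemma \ref{lemma: w is stable} inside the proof of Theorem \ref{theorem:R(A) quantizes preproj algebra}: any two liftings of the same element of $(\mathbb{K}\overline Q \mathbf{w})_\natural$ differ by elements already lying in the two-sided ideal, so the induced map on semiclassical limits is unambiguous. Once this stability is in hand, commutativity of \eqref{eq:quantizationcommwithreduction} reduces to the identification of arrows above, completing the proof.
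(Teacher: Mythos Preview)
Your proposal is correct and matches the paper's approach: the paper treats Theorem~\ref{thm:NCQR} as a summary of the preceding constructions, with the right vertical arrow supplied by Theorem~\ref{theorem:R(A) quantizes preproj algebra} and the commutativity of the square being precisely the commutative diagram appearing in the surjectivity step of that theorem's proof. Your identification of the four arrows and the role of Lemma~\ref{lemma: w is stable} is exactly in line with the paper.
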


\section{Quantization of quiver representations}\label{sect:QR}

In this section, we collect the results of Holland \cite{Hol1999}
on the commutativity of quantization and reduction
for quiver representations. Since we will heavily use 
his results, in what follows we give some details.

\subsection{Linear differential operators}

Let us start by recalling the following.

\begin{definition} \label{definition:differential operators}
Let $X$ be a smooth affine variety with the coordinate ring 
$T:=\mathbb{K}[X]$. The {\it algebra of linear differential operators} on 
$X$ is the subalgebra of $\mathrm{End}_{\mathbb{K}}T$ 
generated by $\mathrm{End}_{T}T\cong T$ 
and $\mathrm{Der}_{\mathbb{K}}(T)$.
\end{definition}

Let $T$ be as above. Suppose $V$ is a finite dimensional $\mathbb{K}$-vector space. Let
$E_{V}:= V \otimes T$; then $\mathrm{End}_{\mathbb{K}}V \otimes \mathcal{D}(T)$ 
embeds naturally in $\mathrm{End}_{\mathbb{K}}E_{V}$, 
which is denoted by $\mathcal{D}(E_{V})$. 
By definition, $\mathcal{D}(E_{\mathbb{K}}) = \mathcal{D}(T)$.

The following two propositions will be used later, whose proof
can be found in Holland \cite[Section 2]{Hol1999}.

\begin{proposition}\label{prop:filtration of DO}
Suppose $V$ is a finite dimensional $\mathbb{K}$-vector space.
Then $\mathcal{D}(E_{V})$ is equipped with a natural filtration:
\begin{displaymath}
\mathcal{D}^{0}(E_{V}) \subset  \mathcal{D}^{1}(E_{V}) 
\subset \cdots  \subset \mathcal{D}^{n}(E_{V}) \subset \cdots 
\end{displaymath}
where
\begin{displaymath}
\mathcal{D}^{0}(E_{V}) = \mathrm{End}_{\mathbb{K}}V \otimes T,\ 
\mathcal{D}^{1}(E_{V})= \mathrm{End}_{\mathbb{K}}V \otimes (T+\mathrm{Der}_{\mathbb{K}}T)	\end{displaymath}
and $\mathcal{D}^{n}(E_{V}) = \mathcal{D}^{n-1}(E_{V}) \mathcal{D}^{1}(E_{V})$
for $n\ge 2$.
\end{proposition}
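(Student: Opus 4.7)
The plan is to reduce the statement to the scalar case $V=\mathbb{K}$ and then verify, by induction, the three essential properties a filtration must satisfy: the chain inclusion, exhaustion, and the associativity of the recursive definition.

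First, I would record the identification $\mathcal{D}(E_V)\cong \mathrm{End}_{\mathbb{K}}V\otimes \mathcal{D}(T)$ which is built into the definition of $\mathcal{D}(E_V)$ given in the paragraph before the statement. Because $E_V = V\otimes T$ is a free $T$-module of finite rank, the $T$-linear endomorphisms are $\mathrm{End}_{\mathbb{K}}V\otimes T$, and each $\partial\in\mathrm{Der}_{\mathbb{K}}T$ extends $\mathbb{K}$-linearly to $E_V$ via $1_V\otimes\partial$. Hence the claimed spaces $\mathcal{D}^0(E_V)$ and $\mathcal{D}^1(E_V)$ are genuine subspaces of $\mathcal{D}(E_V)$, and the recursive definition $\mathcal{D}^n(E_V)=\mathcal{D}^{n-1}(E_V)\,\mathcal{D}^1(E_V)$ makes sense as the $\mathbb{K}$-linear span of products inside the associative algebra $\mathcal{D}(E_V)$.

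Next, I would verify the chain inclusion $\mathcal{D}^{n-1}(E_V)\subseteq \mathcal{D}^n(E_V)$. Since the identity $1\otimes 1$ lies in $\mathcal{D}^0(E_V)\subseteq \mathcal{D}^1(E_V)$, one has
\[
\mathcal{D}^{n-1}(E_V)=\mathcal{D}^{n-1}(E_V)\cdot(1\otimes 1)\subseteq \mathcal{D}^{n-1}(E_V)\,\mathcal{D}^1(E_V)=\mathcal{D}^n(E_V),
\]
and an easy induction gives exhaustion: by Definition \ref{definition:differential operators}, $\mathcal{D}(T)$ is the subalgebra of $\mathrm{End}_{\mathbb{K}}T$ generated by $T$ and $\mathrm{Der}_{\mathbb{K}}T$, both of which sit inside $\mathcal{D}^1(E_V)$ after tensoring with $\mathrm{End}_{\mathbb{K}}V$, so any element of $\mathcal{D}(E_V)$ is a finite $\mathbb{K}$-linear combination of length-$n$ products of generators and hence lies in some $\mathcal{D}^n(E_V)$.

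The main obstacle is showing that the recursion is unambiguous, i.e.\ that defining $\mathcal{D}^n(E_V)$ by right multiplication with $\mathcal{D}^1(E_V)$ (as in the statement) yields the same space as defining it by left multiplication or as products of any factorization pattern. The key tool is the commutator identity $\partial f = f\partial + \partial(f)$ for $\partial\in\mathrm{Der}_{\mathbb{K}}T$ and $f\in T$, which lets one push each derivation past each function at the cost of a strictly lower-order term. Iterating this produces a normal form expressing every element of a product $\mathcal{D}^1(E_V)\cdots \mathcal{D}^1(E_V)$ of length $n$ as a sum $\sum_{k\le n} M_k(1_V\otimes \partial_{i_1}\cdots\partial_{i_k})$ with $M_k\in \mathrm{End}_{\mathbb{K}}V\otimes T$. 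This both confirms the symmetry of the recursion and tells us the filtration agrees with the standard order filtration on $\mathcal{D}(T)$ tensored with the trivial filtration on $\mathrm{End}_{\mathbb{K}}V$; the remaining checks (associativity of the product and compatibility under further multiplications by $\mathcal{D}^1(E_V)$) are then routine.
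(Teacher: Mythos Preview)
Your argument is correct and is the standard verification of the order filtration on differential operators tensored with a matrix algebra. Note, however, that the paper does not actually prove this proposition: immediately before stating it the author writes that its proof ``can be found in Holland [\S2]'' and gives no further argument. So there is nothing to compare against beyond the citation; your write-up simply supplies the routine details that the paper omits.
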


\begin{proposition}\label{proposition:DO quantizes cotangent bundle}
Suppose $X$ is an smooth affine variety and
$V$ is a finite dimensional vector space. Then
\begin{enumerate}
\item for $n>1$, $\displaystyle\frac{\mathcal{D}^{n}(E_{V})}{\mathcal{D}^{n-1}(E_{V})} 
	\cong \mathrm{End}_{\mathbb{K}}V \otimes Sym^{n}(\mathrm{Der}_{\mathbb{K}}T)$;
\item $gr\mathcal{D}(E_{V}) \cong \mathrm{End}_{\mathbb{K}}V 
\otimes \mathbb{K}[T^{*}X]$. In particular, $gr(\mathcal{D}(T)) \cong \mathbb{K}[T^{*}X]$.
		Here $gr(-)$ means the associated graded algebra. 
	\end{enumerate}
\end{proposition}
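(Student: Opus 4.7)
The plan is to reduce to the case $V=\mathbb{K}$ and then run a standard principal symbol argument. Observe first that $\mathcal{D}(E_V) = \mathrm{End}_{\mathbb{K}}V \otimes \mathcal{D}(T)$, and by the inductive definition of the filtration in Proposition \ref{prop:filtration of DO}, one has $\mathcal{D}^{n}(E_V) = \mathrm{End}_{\mathbb{K}}V \otimes \mathcal{D}^{n}(T)$, where $\mathcal{D}^{n}(T)$ is the analogous filtration on scalar differential operators. Both assertions therefore follow once we establish
\[
\mathcal{D}^{n}(T)/\mathcal{D}^{n-1}(T) \;\cong\; Sym_T^{n}(\mathrm{Der}_{\mathbb{K}}T)
\quad\text{and}\quad
\mathrm{gr}\,\mathcal{D}(T) \;\cong\; \mathbb{K}[T^{*}X];
\]
part (1) then comes from tensoring with $\mathrm{End}_{\mathbb{K}}V$, and the identification of $Sym^{n}_T(\mathrm{Der}_{\mathbb{K}}T)$ with the $n$-th graded piece of $\mathbb{K}[T^{*}X]$ will be immediate from the fact below that $\mathrm{Der}_{\mathbb{K}}T$ is the module of sections of the cotangent bundle's dual.

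Next I would prove that $\mathrm{gr}\,\mathcal{D}(T)$ is commutative. The key input is the bound $[D_1,D_2]\in \mathcal{D}^{n+m-1}(T)$ for $D_1 \in \mathcal{D}^{n}(T)$ and $D_2 \in \mathcal{D}^{m}(T)$, which I would prove by induction on $n+m$ using the identity $[D_1 D_1', D_2] = D_1[D_1',D_2]+[D_1,D_2]D_1'$ to reduce to the case $n=m=1$. This base case is immediate from $[f,g]=0$, $[\xi,f] = \xi(f)\in T$, and $[\xi,\eta]\in \mathrm{Der}_{\mathbb{K}}T$ for $f,g\in T$ and $\xi,\eta\in \mathrm{Der}_{\mathbb{K}}T$. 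Commutativity of $\mathrm{gr}\,\mathcal{D}(T)$ then produces, via the universal property of the symmetric algebra, a surjective graded $T$-algebra homomorphism
\[
\varphi:\, Sym_T(\mathrm{Der}_{\mathbb{K}}T)\, \longrightarrow\, \mathrm{gr}\,\mathcal{D}(T),
\]
sending $\xi \in \mathrm{Der}_{\mathbb{K}}T$ to its class in $\mathcal{D}^{1}(T)/\mathcal{D}^{0}(T)$. Moreover, smoothness of $X$ guarantees that $\Omega^{1}_{\mathbb{K}}T$ is a finitely generated projective $T$-module with $T$-linear dual $\mathrm{Der}_{\mathbb{K}}T$; hence $Sym_T(\mathrm{Der}_{\mathbb{K}}T) \cong \mathbb{K}[T^{*}X]$ as graded algebras.

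The main obstacle is showing that $\varphi$ is injective. My plan is to reduce to a local computation via étale coordinates. Since $X$ is smooth, around any closed point there is an étale morphism $U\to \mathbb{A}^{d}$ trivializing $\mathrm{Der}_{\mathbb{K}}T$; because both the formation of $\mathcal{D}(-)$ and of $\mathrm{gr}(-)$ commute with localization and étale base change, and injectivity of $\varphi$ is a local property, it suffices to check the statement when $T = \mathbb{K}[x_1,\ldots,x_d]$. In that case $\mathcal{D}(T)$ is the Weyl algebra $A_d(\mathbb{K})$, whose PBW monomial basis $\{x^{\alpha}\partial^{\beta}\}$ realizes $\mathcal{D}^{n}(T)/\mathcal{D}^{n-1}(T)$ as the free $T$-module on $\{\partial^{\beta} : |\beta| = n\}$, which matches $Sym^{n}_T(\mathrm{Der}_{\mathbb{K}}T)$ exactly. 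Hence $\varphi$ is a local isomorphism, thus a global one, and tensoring back with $\mathrm{End}_{\mathbb{K}}V$ yields both (1) and (2).
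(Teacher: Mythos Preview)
Your argument is correct and follows the standard route (principal symbol map, commutativity of the associated graded, surjection from $Sym_T(\mathrm{Der}_{\mathbb{K}}T)$, then injectivity checked \'etale-locally via the Weyl algebra PBW basis). The paper itself does not give a proof of this proposition at all: it simply states that the proof ``can be found in Holland [\S2]'' and moves on. So there is nothing to compare against beyond noting that your write-up supplies the details the paper omits; if anything, you might streamline it by observing that for a smooth affine $X$ the identification $\mathrm{gr}\,\mathcal{D}(T)\cong \mathbb{K}[T^*X]$ is a textbook fact (e.g.\ EGA IV or any treatment of $\mathcal{D}$-modules), so a one-line citation would also suffice.
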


Let $G$ be a reductive algebraic group with Lie algebra $\mathfrak{g}$. 
Suppose $G$ acts on $X$, then we get a Lie algebra map 
$\tau:\, \mathfrak{g}\, \rightarrow\, \mathrm{Der}_{\mathbb{K}}T$. 
Furthermore, one can extend $\tau$ to be an algebra map
$\tau:\, \mathcal{U}( \mathfrak{g} ) \, \rightarrow \, \mathcal{D}(T)$.
Define a $G$-action on $\mathcal{D}(T)$ by
\begin{equation} \label{formula: group action on differential operators}
	g.D:=g\circ D \circ g^{-1},\;\; \text{for any }g \in \mathrm{G},\ D \in \mathcal{D}(T).
\end{equation}
This $\mathrm{G}$-action induces a $\mathfrak{g}$-action on $\mathcal{D}(T)$ by:
\begin{equation}\label{formula: Lie algebra action on differential operators}
x.D:= \tau(x) D-D\tau(x) = [\tau(x),D],\;\;
\text{for any }x\in \mathfrak{g},\ D\in \mathcal{D}(T).
\end{equation}

Now consider the following function:
\begin{displaymath}
	p:\, \mathbb{N}^{Q_{0}}\, \rightarrow\, \mathbb{Z}, \alpha \mapsto 1 + \sum_{a \in Q_{1}} \alpha_{s(a)}\alpha_{t(a)} - \sum_{i \in Q_{0}} \alpha_{i} ^{2}.
\end{displaymath}
The following proposition, due to Crawley-Boevey, 
gives several equivalent descriptions of a {\it flat} moment map:

\begin{proposition}[Crawley-Boevey \cite{Cra2001}, Theorem 1.1]
Suppose $Q$ is a finite quiver, $\mathbf{d} \in \mathbb{N}^{Q_{0}}$ 
is a dimension vector, and 
$\mu:\, T^{*}\mathrm{Rep}(Q,\mathbf{d})\, 
\rightarrow\, \mathfrak{gl}_{\mathbf{d}}$ is the moment map. 
Then the following statements are equivalent:
\begin{enumerate}
\item $\mu$ is a flat morphism;
		
\item $\mu^{-1}(0)$ is of dimension $\sum_{i \in Q_{0}} d_{i}^{2} -1 + 2p(\mathbf{d})$;
		
\item $\dim \mu^{-1}(0) = 2\dim\mathrm{Rep}(Q,\mathbf{d}) 
- \dim \mathrm{GL}_{\mathbf{d}}$.
\end{enumerate}
\end{proposition}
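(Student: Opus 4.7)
The plan is to establish the three-way equivalence by combining a direct dimension count for $(2) \Leftrightarrow (3)$ with a miracle-flatness argument for $(1) \Leftrightarrow (2)$, leveraging the natural $\mathbb{K}^{*}$-scaling action on the cotangent space.

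For $(2) \Leftrightarrow (3)$ I would simply do the arithmetic. Using $\dim \mathrm{Rep}(Q,\mathbf{d}) = \sum_{a \in Q_{1}} d_{s(a)} d_{t(a)}$, $\dim \mathrm{GL}_{\mathbf{d}} = \sum_{i} d_{i}^{2}$, and $2p(\mathbf{d}) = 2 + 2\sum_{a \in Q_{1}} d_{s(a)} d_{t(a)} - 2\sum_{i} d_{i}^{2}$, both formulas reduce to
\[
\dim \mu^{-1}(0) \;=\; 2\sum_{a \in Q_{1}} d_{s(a)} d_{t(a)} - \sum_{i} d_{i}^{2} + 1.
\]
The ``$+1$'' reflects that the diagonal scalar $\mathbb{K}^{*} \subset \mathrm{GL}_{\mathbf{d}}$ acts trivially on $\mathrm{Rep}(Q,\mathbf{d})$, so the effective target dimension for the moment map is $\sum_{i} d_{i}^{2} - 1$.

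For $(1) \Rightarrow (2)$, the image of $\mu$ lies inside the hyperplane $\mathfrak{g}_{0} := \{(\xi_{i}) \in \mathfrak{gl}_{\mathbf{d}} : \sum_{i} \mathrm{tr}(\xi_{i}) = 0\}$ of dimension $\sum_{i} d_{i}^{2} - 1$; if $\mu$ is flat, every non-empty fiber is equidimensional of dimension $\dim T^{*}\mathrm{Rep}(Q,\mathbf{d}) - (\sum_{i} d_{i}^{2} - 1)$, which is exactly $(2)$ at $\xi = 0$. The substantive direction is $(2) \Rightarrow (1)$, which I would handle by the \emph{miracle flatness} principle: a finite-type morphism $f: X \to Y$ with $X$ Cohen-Macaulay, $Y$ regular, and all non-empty fibers equidimensional of dimension $\dim X - \dim Y$, is automatically flat. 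In the case at hand, $T^{*}\mathrm{Rep}(Q,\mathbf{d})$ is smooth and $\mathfrak{g}_{0}$ is affine. Krull's Hauptidealsatz gives $\dim \mu^{-1}(\xi) \geq \dim T^{*}\mathrm{Rep}(Q,\mathbf{d}) - \dim \mathfrak{g}_{0}$ for every non-empty fiber; the reverse inequality follows from the $\mathbb{K}^{*}$-action scaling the cotangent fibers, under which $\mu$ is homogeneous of positive weight. Hence $\mu^{-1}(\lambda\xi) \cong \mu^{-1}(\xi)$ for $\lambda \neq 0$, and as $\lambda \to 0$ upper-semicontinuity of fiber dimension forces $\dim \mu^{-1}(\xi) \leq \dim \mu^{-1}(0)$. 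Combined with $(2)$, every non-empty fiber hits the expected dimension, and miracle flatness concludes.

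The main obstacle is justifying that the image of $\mu$ is in fact all of $\mathfrak{g}_{0}$, not merely contained in it --- this is what validates the codimension used in the miracle-flatness count. A self-contained proof of surjectivity of $\mu: T^{*}\mathrm{Rep}(Q,\mathbf{d}) \to \mathfrak{g}_{0}$ requires Crawley-Boevey's root-theoretic classification of when $\mu^{-1}(\xi) \neq \emptyset$, via the canonical decomposition of $\mathbf{d}$ into positive roots and Kac's theorem on representations of the deformed preprojective algebras; this is the genuinely deep input of \cite{Cra2001}. Once surjectivity is in hand, the rest is a clean application of upper semicontinuity and the Cohen-Macaulay flatness criterion.
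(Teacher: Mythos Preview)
The paper does not prove this proposition; it is quoted verbatim from Crawley-Boevey \cite{Cra2001} with no argument supplied, so there is nothing in-paper to compare against. Your sketch is the standard route (and is essentially what Crawley-Boevey does): use the $\mathbb{K}^{*}$-scaling on cotangent fibers plus upper semicontinuity to reduce every fiber to the central one, then apply miracle flatness.

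One point to sharpen: your last paragraph misidentifies the obstacle. Miracle flatness is local on the source. Once you have shown that every non-empty fiber of $\mu:T^{*}\mathrm{Rep}(Q,\mathbf{d})\to\mathfrak{g}_{0}$ has dimension exactly $\dim T^{*}\mathrm{Rep}(Q,\mathbf{d})-\dim\mathfrak{g}_{0}$ (the Hauptidealsatz lower bound together with your semicontinuity upper bound does this), flatness follows immediately; you never need to know that $\mu$ surjects onto $\mathfrak{g}_{0}$. Surjectivity is true, but it is not what drives $(2)\Rightarrow(1)$. The deep content of \cite{Cra2001} lies in deciding \emph{when} condition $(2)$ holds in terms of roots, not in the equivalence $(1)\Leftrightarrow(2)\Leftrightarrow(3)$ itself, which is soft commutative algebra.

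A side remark you have implicitly handled: as written in the paper, $(2)$ and $(3)$ differ by $1$, since $2\dim\mathrm{Rep}(Q,\mathbf{d})-\dim\mathrm{GL}_{\mathbf{d}}=2\sum_{a}d_{s(a)}d_{t(a)}-\sum_{i}d_{i}^{2}$ while $(2)$ evaluates to one more. In Crawley-Boevey's original the acting group is $\mathrm{GL}_{\mathbf{d}}/\mathbb{K}^{*}$, which resolves the discrepancy; your passage to $\mathfrak{g}_{0}$ does the same thing.
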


Now take a character $\chi: \mathfrak{g}\to\mathbb K$
which does not vanish on $\ker\tau$. The following two Propositions
are obtained by Holland in \cite{Hol1999}, which will be used later:

\begin{proposition}[\cite{Hol1999} Proposition 2.4]
Suppose that the moment map $\mu:\, T^{*}X\, \rightarrow\, \mathfrak{g}^{*}$ is flat. Then
\begin{displaymath}
gr\Big(\frac{\mathcal{D}(E_{V})}{\mathcal{D}(E_{V})(\tau - \chi)
(\mathfrak{g})} \Big) \cong \frac{gr(\mathcal{D}(E_{V}))}
{\big(gr \mathcal{D}(E_{V})\big)\mathfrak{g}}
\end{displaymath} 
as $\mathbb{K}[T^{*}X]$-modules.
In particular,
\begin{displaymath}
gr( \frac{(\mathcal{D}(E_{V}))^{{G}}}{\big(\mathcal{D}(E_{V})
(\tau-\chi)(\mathfrak{g}) \big)^{{G}}} )  \cong 
\big(\mathrm{End}_{\mathbb{K}}V \otimes \mathbb{K}[\mu^{-1}(0)]\big)^{{G}}
\end{displaymath}
as $\mathbb{K}$-algebras,
where $\tau$ is described after Proposition \ref{proposition:DO quantizes cotangent bundle}
and $\chi: \mathfrak{g} \rightarrow \mathbb{K}$ is a character of $\mathfrak{g}$,
which does not vanish on $\ker\,\tau$.
\end{proposition}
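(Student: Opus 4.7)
The strategy is to analyze the order filtration on both sides of the asserted isomorphism and reduce the statement to a Koszul-complex calculation for a regular sequence produced by the moment map. Write $I:=\mathcal D(E_V)(\tau-\chi)(\mathfrak g)$ and equip $\mathcal D(E_V)$ with the order filtration of Proposition~\ref{prop:filtration of DO}. Since $\tau(x)\in\mathrm{Der}_{\mathbb K}T\subset\mathcal D^1(T)$ while $\chi(x)\in\mathbb K\subset\mathcal D^0(T)$, the principal symbol of $(\tau-\chi)(x)$ equals $\sigma(\tau(x))$, which under the identification $gr(\mathcal D(E_V))\cong\mathrm{End}_{\mathbb K}V\otimes\mathbb K[T^*X]$ of Proposition~\ref{proposition:DO quantizes cotangent bundle} is exactly $1\otimes\mu_x$. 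Passing to symbols therefore produces a canonical surjection
\begin{equation*}
\frac{gr(\mathcal D(E_V))}{(gr\,\mathcal D(E_V))\,\mathfrak g}\twoheadrightarrow gr\Bigl(\frac{\mathcal D(E_V)}{I}\Bigr),
\end{equation*}
and the real content is that this arrow is injective, equivalently that $gr(I)=(gr\mathcal D(E_V))\cdot\mu(\mathfrak g)$.

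The core tool is a Chevalley--Eilenberg/Koszul-type resolution. Fix a basis $x_1,\dots,x_m$ of $\mathfrak g$, and set $F_i=(\tau-\chi)(x_i)\in\mathcal D(E_V)$ and $f_i=\mu_{x_i}\in\mathbb K[T^*X]$. By the flatness criterion recalled just before the proposition, flatness of $\mu$ forces $\dim\mu^{-1}(0)=2\dim X-\dim\mathfrak g$, and since $\mathbb K[T^*X]$ is a polynomial ring (hence Cohen--Macaulay), this makes $f_1,\dots,f_m$ a regular sequence. Consider the complex
\begin{equation*}
\cdots\longrightarrow\mathcal D(E_V)\otimes\wedge^2\mathfrak g\longrightarrow\mathcal D(E_V)\otimes\mathfrak g\xrightarrow{\,\partial\,}\mathcal D(E_V)
\end{equation*}
whose differential is the Chevalley--Eilenberg boundary for the right action $x\cdot D:=DF_x$ with the usual bracket-correction term; it squares to zero because $[\tau(x),\tau(y)]=\tau([x,y])$ and $\chi$ vanishes on $[\mathfrak g,\mathfrak g]$. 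Filter this complex by placing $D\otimes x_{i_1}\wedge\cdots\wedge x_{i_k}$ in filtration level $\mathrm{ord}(D)+k$. The filtration is exhaustive and bounded below in each homological degree, and its associated graded complex is precisely the Koszul complex on $(f_1,\dots,f_m)$ with coefficients in $\mathrm{End}_{\mathbb K}V\otimes\mathbb K[T^*X]$, which is acyclic in positive homological degree by regularity of the sequence. A standard spectral-sequence argument then lifts this acyclicity back to the original complex, and exactness at the $\mathcal D(E_V)\otimes\mathfrak g$ spot is exactly the desired identity $gr(I)=(gr\mathcal D(E_V))\cdot\mu(\mathfrak g)$.

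The ``in particular'' statement follows by taking $G$-invariants. Since $G$ is reductive, $(-)^G$ is exact and commutes with both the order filtration and the formation of associated gradeds; applying it to the isomorphism just established gives
\begin{equation*}
gr\Bigl(\frac{(\mathcal D(E_V))^G}{(\mathcal D(E_V)(\tau-\chi)(\mathfrak g))^G}\Bigr)\cong\Bigl(\frac{gr(\mathcal D(E_V))}{(gr\,\mathcal D(E_V))\,\mathfrak g}\Bigr)^G=(\mathrm{End}_{\mathbb K}V\otimes\mathbb K[\mu^{-1}(0)])^G,
\end{equation*}
the last equality being the defining relation $\mathbb K[T^*X]/(\mu(\mathfrak g))=\mathbb K[\mu^{-1}(0)]$ for the moment map. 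The main obstacle is the Koszul lift: one must verify that the Chevalley--Eilenberg-type boundary really does assemble into a filtered complex whose associated graded is the commutative Koszul complex, and that the spectral sequence truly collapses without contributing spurious lower-order relations. Once these bookkeeping details are in place, reductivity of $G$ and the defining property of the moment map finish the argument.
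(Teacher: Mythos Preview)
The paper does not supply its own proof of this proposition; it is quoted directly from Holland \cite{Hol1999} without argument. Your proposal is therefore not being compared against anything in the present paper. The strategy you outline---pass to principal symbols, identify the associated graded of the Chevalley--Eilenberg complex with the Koszul complex on the moment-map components, use flatness of $\mu$ to obtain a regular sequence and hence acyclicity, lift via the filtration spectral sequence, and finally take $G$-invariants using reductivity---is the standard route to this result and is essentially the argument in \cite[Proposition~2.4]{Hol1999}, the source the paper is citing.

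One technical point deserves attention. You choose a basis $x_1,\dots,x_m$ of all of $\mathfrak g$ and assert that the symbols $f_i=\mu_{x_i}$ form a regular sequence of length $m=\dim\mathfrak g$. When $\ker\tau\neq 0$ (as in the quiver case, where the diagonal copy of $\mathbb K$ in $\mathfrak{gl}_{\mathbf d}$ acts trivially on $\mathrm{Rep}(Q,\mathbf d)$), the corresponding $f_i$ vanish identically, so the full sequence cannot be regular and your associated-graded Koszul complex is not acyclic as stated. The fix is to work with a basis adapted to a splitting $\mathfrak g=\ker\tau\oplus\mathfrak g'$: the components coming from $\mathfrak g'$ give a regular sequence of the correct length (this is what the flat-dimension count actually yields), while the generators $(\tau-\chi)(x)=-\chi(x)$ for $x\in\ker\tau$ are scalars and must be handled separately via the hypothesis on $\chi$. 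Once you make this adjustment, the rest of your spectral-sequence bookkeeping goes through.
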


\begin{proposition}[\cite{Hol1999} Proposition 2.4]
\label{proposition:DO quantizes quiver variety}
With notations as above, there exists an isomorphism
\begin{displaymath}
gr \Big(\frac{(\mathcal{D}(T))^{{G}}}
{ \big(\mathcal{D}(T)(\tau-\chi)
(\mathfrak{g}) \big)^{{G}}} \Big) \cong 
\big(\mathbb{K}[\mu^{-1}(0)]\big)^{{G}}.
\end{displaymath}
\end{proposition}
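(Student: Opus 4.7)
The plan is to deduce this statement as the $V = \mathbb{K}$ specialization of the preceding proposition, and for transparency I also sketch the direct route. Taking $V = \mathbb{K}$ gives $E_V = \mathbb{K} \otimes T \cong T$, hence $\mathcal{D}(E_V) = \mathcal{D}(T)$, and $\mathrm{End}_{\mathbb{K}}V = \mathbb{K}$. Substituting in the ``in particular'' clause of the preceding proposition yields
\[
gr\Big(\frac{(\mathcal{D}(T))^{G}}{(\mathcal{D}(T)(\tau-\chi)(\mathfrak{g}))^{G}}\Big)
\cong (\mathrm{End}_{\mathbb{K}}\mathbb{K}\otimes \mathbb{K}[\mu^{-1}(0)])^{G}
\cong (\mathbb{K}[\mu^{-1}(0)])^{G},
\]
which is precisely the claim.

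A direct proof proceeds in four steps. First, by Proposition \ref{prop:filtration of DO} and Proposition \ref{proposition:DO quantizes cotangent bundle}(2), the canonical filtration on $\mathcal{D}(T)$ has associated graded $\mathbb{K}[T^{*}X]$. Second, the principal symbols of the vector fields $\tau(x) \in \mathcal{D}^{1}(T)$, for $x \in \mathfrak{g}$, are precisely the pairings $\langle \mu, x\rangle \in \mathbb{K}[T^{*}X]$, because these operators are by construction the Hamiltonian lifts of the $\mathrm{G}$-action; in particular the principal symbol of $\tau(x) - \chi(x)$ coincides with that of $\tau(x)$. Hence the ideal of $gr(\mathcal{D}(T))$ generated by the image of $\mathfrak{g}$ is exactly the vanishing ideal of $\mu^{-1}(0)$, so that
\[
\frac{gr(\mathcal{D}(T))}{gr(\mathcal{D}(T))\mathfrak{g}} \cong \mathbb{K}[\mu^{-1}(0)].
\]
Third, one identifies
\[
gr\Big(\frac{\mathcal{D}(T)}{\mathcal{D}(T)(\tau-\chi)(\mathfrak{g})}\Big) \cong \frac{gr(\mathcal{D}(T))}{gr(\mathcal{D}(T))\mathfrak{g}},
\]
which is a general filtered-algebra fact provided the two sides have matching Hilbert series; this is where flatness of $\mu$ enters crucially. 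Fourth, take $G$-invariants on both sides: since $G$ is reductive, the Reynolds operator is exact and commutes with the associated graded, delivering the required isomorphism of $\mathbb{K}$-algebras.

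The main obstacle is the third step. Without flatness of $\mu$ the quotient of the associated graded can be strictly larger than the associated graded of the quotient, because lifting relations from $\mathbb{K}[T^{*}X]$ back to $\mathcal{D}(T)$ may introduce lower-order corrections that do not vanish in $gr$. Flatness of $\mu$ guarantees that $\mu^{-1}(0) \subset T^{*}X$ is a complete intersection of the expected codimension $\dim \mathfrak{g}$, so that the components of $\mu$ form a regular sequence in $\mathbb{K}[T^{*}X]$; a regular sequence can then be lifted to $\mathcal{D}(T)$ in a way that keeps the symbol calculation exact, allowing one to pass freely between $gr$ and the quotient. All the substantive content of the proposition is in this flatness hypothesis; granting it, the rest is a bookkeeping specialization of the preceding proposition.
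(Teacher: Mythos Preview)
Your proposal is correct. The paper does not supply its own proof of this proposition at all: it simply records it as a citation of Holland \cite[Proposition 2.4]{Hol1999}, just as it does for the preceding proposition. Your observation that the statement is the $V=\mathbb{K}$ specialization of that preceding proposition is exactly right, and your direct sketch (symbol map, flatness giving a regular sequence, exactness of invariants for reductive $G$) is an accurate summary of how Holland's argument runs; there is nothing to add.
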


\subsection{Quantization of quiver representations}

Now we are ready to study the quantization and reduction of quiver representations.

\begin{definition}\label{def:filteredquant}
Suppose $A$ is commutative $\mathbb{Z}_{\geq 0}$-graded 
$\mathbb{K}$-algebra, equipped with a Poisson bracket 
whose degree is $-1$.
A {\it filtered quantization} of $A$ is a filtered algebra 
$\mathcal{A}_{\leq \bullet}$, such that $gr \mathcal{A}_{\leq \bullet}$ 
is isomorphic to $A$ as graded Poisson algebras.
\end{definition}

Notice that the Poisson bracket on $gr \mathcal{A}_{\leq \bullet}$ is given as follows:
for any $\overline{a},\ \overline{b} \in gr\mathcal{A}_{\leq \bullet}$,
\begin{displaymath}
\{\overline{a},\overline{b}\}:= \overline{[a,b]}
\end{displaymath}
Here $[a,b]$ is the commutator with respect to the product in $\mathcal{A}_{\leq \bullet}$.

By the above definition and Proposition \ref{proposition:DO quantizes quiver variety}, we obtain
the following:

\begin{corollary}\label{coro:filtered quant}
Suppose $Q$ is a finite quiver, $\mathbf{d}$ is a dimension vector 
and $\chi$ is a character of $\mathfrak{gl}_{\mathbf{d}}$ 
such that the moment map 
$\mu:\, T^{*}\mathrm{Rep}(Q,\mathbf{d})\, \rightarrow \, 
\mathfrak{gl}_{\mathbf{d}}$ is a flat morphism and $\chi$ 
does not vanish on $\mathrm{Ker} \tau$.
Then 
\begin{enumerate}
\item[$(1)$]$\mathcal{D}(\mathrm{Rep}
(Q,\mathbf{d}))$ quantizes $\mathbb{K}[T^{*}\mathrm{Rep}(Q,\mathbf{d})]$, 
and

\item[$(2)$]
$\displaystyle\frac{(\mathcal{D}(\mathrm{Rep}
(Q,\mathbf{d})))^{\mathrm{GL}_{\mathbf{d}}}}
{(\mathcal{D}(\mathrm{Rep}(Q,\mathbf{d}))(\tau-\chi)
(\mathfrak{gl}_{\mathbf{d}}))^{\mathrm{GL}_{\mathbf{d}}}}$ 
quantizes ${\mu^{-1}(0)}//\mathrm{GL}_{\mathbf{d}}$.
\end{enumerate}
\end{corollary}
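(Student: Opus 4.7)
The plan is to obtain both statements as direct consequences of Propositions \ref{proposition:DO quantizes cotangent bundle} and \ref{proposition:DO quantizes quiver variety}, upgrading the algebra isomorphisms already stated there to isomorphisms of \emph{graded Poisson} algebras. This is what is required to verify Definition \ref{def:filteredquant}; the only thing beyond the cited statements that must be checked is the compatibility of brackets.

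For part (1), I would begin by noting that the filtration of Proposition \ref{prop:filtration of DO} makes $\mathcal{D}(\mathrm{Rep}(Q,\mathbf{d}))$ into a filtered algebra whose associated graded is, by Proposition \ref{proposition:DO quantizes cotangent bundle}(2), isomorphic to $\mathbb{K}[T^{*}\mathrm{Rep}(Q,\mathbf{d})]$ as a graded commutative algebra. The Poisson bracket on $gr\,\mathcal{D}(\mathrm{Rep}(Q,\mathbf{d}))$ is defined by $\{\bar a,\bar b\}=\overline{[a,b]}$, and to identify it with the canonical symplectic bracket on the cotangent bundle one checks agreement on generators: coordinate functions (symbols of degree-$0$ operators) and symbols of derivations (degree-$1$ operators). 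The commutator relation $[\xi,f]=\xi(f)$ in $\mathcal{D}$ descends to $\{\xi,f\}=\xi(f)$, which is exactly the canonical Poisson bracket on $T^{*}\mathrm{Rep}(Q,\mathbf{d})$. This yields (1) directly.

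For part (2), the flatness hypothesis on $\mu$ lets us invoke Proposition \ref{proposition:DO quantizes quiver variety} to obtain the isomorphism of graded algebras
\[
gr\!\left(\frac{(\mathcal{D}(T))^{\mathrm{GL}_{\mathbf{d}}}}{(\mathcal{D}(T)(\tau-\chi)(\mathfrak{gl}_{\mathbf{d}}))^{\mathrm{GL}_{\mathbf{d}}}}\right) \;\cong\; \mathbb{K}[\mu^{-1}(0)]^{\mathrm{GL}_{\mathbf{d}}}\cong\mathbb{K}[\mu^{-1}(0)//\mathrm{GL}_{\mathbf{d}}].
\]
The target carries the reduced Poisson bracket coming from Hamiltonian reduction: $\mathbb{K}[T^{*}\mathrm{Rep}(Q,\mathbf{d})]^{\mathrm{GL}_{\mathbf{d}}}$ is a Poisson subalgebra, the ideal generated by $\mu^{*}(\mathfrak{gl}_{\mathbf{d}})$ is a Poisson ideal of that subalgebra, and the quotient inherits a Poisson bracket. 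To match this with the one induced from the commutator in the filtered quotient, I would observe two things: first, the principal symbol of $\tau(x)-\chi(x)$ equals the $x$-component of the classical moment map (the character $\chi$ is of degree zero and vanishes in the symbol), so the graded quotient ideal coincides with the reduction ideal; second, $(1)$ already guarantees Poisson-compatibility of the bracket on $gr\,\mathcal{D}$, and this restricts to the invariant subalgebra. Combined, these give the isomorphism as graded Poisson algebras.

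The main obstacle I foresee is the interchange of invariants and associated graded, which is implicitly used in Proposition \ref{proposition:DO quantizes quiver variety} and in passing the Poisson bracket to the invariant quotient. The justification is that $\mathrm{GL}_{\mathbf{d}}$ is reductive, so its action on each filtration piece $\mathcal{D}^{n}(\mathrm{Rep}(Q,\mathbf{d}))$ is completely reducible and the formation of invariants commutes with the formation of the associated graded for any $\mathrm{GL}_{\mathbf{d}}$-stable filtered subspace. With this in place, both (1) and (2) follow without further computation.
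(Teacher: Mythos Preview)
Your proposal is correct and follows essentially the same approach as the paper: the paper simply states that the corollary follows ``by the above definition and Proposition \ref{proposition:DO quantizes quiver variety}'', without further argument. You have merely filled in the details the paper leaves implicit, namely the verification that the graded algebra isomorphisms of Propositions \ref{proposition:DO quantizes cotangent bundle} and \ref{proposition:DO quantizes quiver variety} respect the Poisson brackets, and the reductivity argument justifying the interchange of invariants with associated graded.
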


For the convenience of our argument below,
we also need the {\it graded quantization}, which we now recall.

\begin{definition}\label{def:gradedquant}
Suppose $A$ is a $\mathbb{Z}_{\ge 0}$-graded 
commutative $\mathbb{K}$-algebra equipped 
with Poisson bracket $\{,\}$ of degree $-1$. 
A {\it graded quantization} of $A$ is a graded $\mathbb{K}[\hbar]$-algebra($\mathrm{deg}\hbar = 1$) 
which is free as a $\mathbb{K}[\hbar]$-module, 
equipped with an isomorphism of $\mathbb{K}$-algebras:
	\begin{displaymath}
		\Phi: \frac{A_{\hbar}}{\hbar A_{\hbar}}\, \rightarrow\, A,
	\end{displaymath}
such that for any $a,b \in A_{\hbar}$, if we denote their images
in $\displaystyle\frac{A_{\hbar}}{\hbar A_{\hbar}}$
by $\overline{a},\overline{b}$, then 
$\Phi \big(\overline{\displaystyle\frac{1}{\hbar}(ab - ba)} \big) 
= \{\Phi(\overline{a}),\Phi(\overline{b})\}$.
\end{definition}

Now let us recall the notion of the homogeneous differential operators
(see Losev \cite{Los2012}):

\begin{definition}\label{def:homogenized DO}
Let $X$ be a smooth affine variety, 
and $T:= \mathbb{K}[X]$ be its coordinate ring. 
The homogeneous differential operators on $X$ is 
defined to be the graded $\mathbb{K}[\hbar]$-algebra 
$\mathcal{D}_{\hbar}(X)$ generated by $T$ equipped with 
degree $0$, $\mathrm{Der}_{\mathbb{K}}(T)$ equipped with 
degree $1$ and subject to the following relation:
\begin{enumerate}
\item for any $f,g \in T$, $f \ast g = fg;$
		
\item for $X \in \mathrm{Der}_{\mathbb{K}}T,
		\ f \in T$, $f \ast X = fX$;
		
\item $X \in \mathrm{Der}_{\mathbb{K}}T,
		\ f \in T$;
		$X \ast f = fX + \hbar X(f)$;
		
\item for $X,\ Y \in \mathrm{Der}_{\mathbb{K}}T$, 
		$X \ast Y - Y \ast X = \hbar [X,Y]$.
\end{enumerate} 
\end{definition}

With the above definition, Corollary \ref{coro:filtered quant} is rephrased as follows:

\begin{corollary}
Suppose $Q$ is a finite quiver, $\mathbf{d}$ is a dimension vector and 
$\chi$ is a character of $\mathfrak{gl}_{\mathbf{d}}$ such that 
the moment map $\mu:\, T^{*}\mathrm{Rep}(Q,\mathbf{d})\, 
\rightarrow \, \mathfrak{gl}_{\mathbf{d}}$ is a flat morphism and 
$\chi$ does not vanish on $\ker \tau$.
Then
\begin{enumerate}
\item[$(1)$]
 $\mathcal{D}_{\hbar} (\mathrm{Rep}(Q,\mathbf{d}))$ 
is a graded quantization of $\mathbb{K}[T^{*} \mathrm{Rep}(Q,\mathbf{d})]$,
and 
	
\item[$(2)$] $\displaystyle\frac{ 
\big(\mathcal{D}_{\hbar}(\mathrm{Rep}(Q,\mathbf{d})) \big)^{\mathrm{GL}_{\mathbf{d}}}}
{ \big(\mathcal{D}_{\hbar}(\mathrm{Rep}(Q,\mathbf{d}))(\tau-\hbar\chi)
(\mathfrak{gl}_{\mathbf{d}}) \big)^{\mathrm{GL}_{\mathbf{d}}}}$
is a graded quantization of $\mathbb{K}[\mathcal{M}_{\mathbf{d}}(Q)]$.
\end{enumerate}
\end{corollary}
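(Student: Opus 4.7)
The plan is to identify $\mathcal{D}_\hbar(Y)$ with the Rees algebra of $\mathcal{D}(Y)$ for the order filtration of Proposition \ref{prop:filtration of DO}, and then bootstrap the filtered statement of Corollary \ref{coro:filtered quant} into a graded statement in the sense of Definition \ref{def:gradedquant}. Explicitly, writing $T=\mathbb{K}[Y]$ for a smooth affine $Y$, I would verify that the assignment $f\mapsto f$ for $f\in T$ (Rees-degree $0$) and $\partial\mapsto\partial\hbar$ for $\partial\in\mathrm{Der}_\mathbb{K}T$ (Rees-degree $1$) extends to a graded $\mathbb{K}[\hbar]$-algebra isomorphism
\[
\mathcal{D}_\hbar(Y)\;\cong\;\mathrm{Rees}(\mathcal{D}(Y))\;:=\;\bigoplus_{n\ge 0}\mathcal{D}^n(Y)\,\hbar^n.
\]
The defining relations of Definition \ref{def:homogenized DO} are exactly what the Rees algebra requires: $\partial\ast f-f\ast\partial=\hbar\,\partial(f)$ reflects $[\partial,f]=\partial(f)\in\mathcal{D}^0(Y)$ after one drop of filtration degree, and $\partial_1\ast\partial_2-\partial_2\ast\partial_1=\hbar[\partial_1,\partial_2]$ reflects the analogous drop for commutators of derivations. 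Once this identification is in place, it is standard that $\mathcal{D}_\hbar(Y)/\hbar\,\mathcal{D}_\hbar(Y)\cong\mathrm{gr}\,\mathcal{D}(Y)$ as graded algebras, that the induced bracket $\overline{\tfrac{1}{\hbar}(ab-ba)}$ coincides with the symbol Poisson bracket on $\mathrm{gr}\,\mathcal{D}(Y)$, and that $\mathcal{D}_\hbar(Y)$ is free as a $\mathbb{K}[\hbar]$-module.

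Part (1) is then immediate: Proposition \ref{proposition:DO quantizes cotangent bundle} supplies the Poisson isomorphism $\mathrm{gr}\,\mathcal{D}(\mathrm{Rep}(Q,\mathbf{d}))\cong\mathbb{K}[T^*\mathrm{Rep}(Q,\mathbf{d})]$, which provides the required $\Phi$. For part (2) I would homogenize the reduction of Corollary \ref{coro:filtered quant}(2). The $\mathrm{GL}_\mathbf{d}$-action and the Lie algebra map $\tau:\mathfrak{gl}_\mathbf{d}\to\mathrm{Der}_\mathbb{K}T$ both preserve the order filtration, so the Rees functor sends invariants to invariants and carries the two-sided ideal $\mathcal{D}(\mathrm{Rep}(Q,\mathbf{d}))(\tau-\chi)(\mathfrak{gl}_\mathbf{d})$ to a graded ideal of $\mathcal{D}_\hbar$. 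The key numerological observation is that $\tau(x)$ sits in filtration degree $1$ while $\chi(x)$ is a scalar in degree $0$, so the correct homogeneous lift of $\tau-\chi$ is $\tau-\hbar\chi$. Consequently, modding out by $\hbar$ identifies
\[
\frac{(\mathcal{D}_\hbar(\mathrm{Rep}(Q,\mathbf{d})))^{\mathrm{GL}_\mathbf{d}}}{\bigl(\mathcal{D}_\hbar(\mathrm{Rep}(Q,\mathbf{d}))(\tau-\hbar\chi)(\mathfrak{gl}_\mathbf{d})\bigr)^{\mathrm{GL}_\mathbf{d}}}\Bigm/\hbar
\]
with the associated graded of the filtered reduction from Corollary \ref{coro:filtered quant}(2), which by Proposition \ref{proposition:DO quantizes quiver variety} equals $\mathbb{K}[\mu^{-1}(0)]^{\mathrm{GL}_\mathbf{d}}=\mathbb{K}[\mathcal{M}_\mathbf{d}(Q)]$. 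Compatibility of brackets is automatic because the Rees bracket $\tfrac{1}{\hbar}[\cdot,\cdot]$ specializes, modulo $\hbar$, to the Poisson bracket on the reduced space.

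The main obstacle is the interchange of the $\mathrm{Rees}$ (equivalently $\mathrm{gr}$) operation with the formation of $\mathrm{GL}_\mathbf{d}$-invariants and the quotient by the moment map ideal. Reductivity of $\mathrm{GL}_\mathbf{d}$ disposes of the first interchange, but commuting $\mathrm{gr}$ with the quotient by $\mathcal{D}(\tau-\chi)(\mathfrak{gl}_\mathbf{d})$ requires the flatness hypothesis on $\mu$; this is precisely what Holland exploits in the proof of Proposition \ref{proposition:DO quantizes quiver variety} via the Crawley-Boevey criterion. Thus no new input is needed beyond the results already cited, only a careful bookkeeping of gradings that confirms $\tau-\hbar\chi$ as the correct homogenization and that Holland's arguments transport unchanged to the graded $\hbar$-setting.
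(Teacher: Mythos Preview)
Your proposal is correct and follows essentially the same approach as the paper: the paper treats this corollary as an immediate rephrasing of Corollary~\ref{coro:filtered quant} via the standard filtered-to-graded (Rees) dictionary and gives no further argument, while you spell out that dictionary explicitly by identifying $\mathcal{D}_\hbar$ with the Rees algebra, verifying that $\tau-\chi$ homogenizes to $\tau-\hbar\chi$, and noting where flatness and reductivity are used. There is no substantive difference in strategy, only in level of detail.
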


\begin{remark}
The above quantization is consistent with Losev \cite{Los2012}.
\end{remark}

\begin{definition}[Quantum moment map]\label{definition:quantum moment map}
Let $G$ be an algebraic group with Lie algebra $\mathfrak{g}$; 
let $A_{\hbar}$ be a graded $\mathbb{K}[\hbar]$-algebra 
equipped with a $\mathfrak{g}$-action. The map 
$\mu^{\sharp}_{\hbar}:\, \mathcal{U}_{\hbar}\mathfrak{g}\, 
\rightarrow\, A_{\hbar}$ is called a {\it quantum moment map}
if $\mu_{\hbar}^{\sharp}(\mathfrak{g}) \subset (A_{\hbar})_{1}$
and for any $v \in \mathfrak{g}$, 
$$A_{\hbar}\, \rightarrow\, A_{\hbar},\; a\mapsto\frac{1}{\hbar}[\mu_{\hbar}^{\sharp}(v),a]\, 
$$ 
is the $\mathfrak{g}$-action of 
$v$.
\end{definition}

Here 
$$\mathcal{U}_{\hbar} \mathfrak{g} := T_{\mathbb{K}[\hbar]}
(\mathbb{K}[\hbar] \otimes \mathfrak{g})/(\{x \otimes y - y \otimes x 
- \hbar[x,y] | \text{ for any } x, y \in \mathfrak{g}\}).$$
The above definition coincides with the quantum moment map 
introduced by Lu in \cite{Lu1993} (see also Xu \cite{Xu1998} and Losev \cite{Los2012}). 
Let us briefly recall the main idea in loc cit.
Suppose $(X,G)$ is a Hamiltonian $G$-space. 
Let $H$ be a Hopf algebra quantizing $G$ and let $V$ be a quantization 
of $X$ and equipped with $H$-action. 
Lu proposed that $\Phi:H \rightarrow V$ is called the {\it quantum moment map} if  
$$a.v=\Phi(S(a^{(1)}))v\Phi(a^{(2)}),$$
where $a\in H,\ v \in V$, $S$ is the antipode of $H$ and 
coproduct of $a$ is written as $a^{(1)} \otimes a^{(2)}$.

Now, fix a dimension vector $\mathbf{d}$;
suppose $Q$ is a finite quiver with a flat moment map 
$\mu: T^{*}\mathrm{Rep}(Q,\mathbf{d})\, \rightarrow\, \mathfrak{gl}_{\mathbf{d}}$. 
Differentiating the $\mathrm{GL}_{\mathbf{d}}$-action on $\mathrm{Rep}(Q,\mathbf{d})$, 
one has (see also Holland \cite[Lemma 3.1]{Hol1999}):

\begin{lemma}\label{lemma:gl_d action on Rep}
The $\mathfrak{gl}_{\mathbf{d}}$-action on $\mathrm{Rep}(Q,\mathbf{d})$ 
is given by the  Lie algebra homomorphism 
$\tau:\, \mathfrak{gl}_{\mathbf{d}}\, \rightarrow\, 
\mathcal{D}(\mathrm{Rep}(Q,\mathbf{d}))$, which maps
\begin{displaymath}
e^{i}_{pq}\, \mapsto\, \sum_{a\in Q,s(a)=i} \sum_{j=1}^{d_{t(a)}} [a]_{jp} 
\frac{\partial}{\partial (a)_{jq}} - \sum_{a\in Q,t(a)=i}
\sum_{j=1}^{d_{s(a)}} [a]_{qj}\frac{\partial }{\partial (a)_{pj}},
\end{displaymath}
where
$e^{i}_{pq}$ is the $(q,p)$-th elementary matrix in the $i$-the summand of
$\mathfrak{\mathfrak{gl}_{\mathbf{d}}}$.
\end{lemma}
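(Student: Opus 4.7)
The plan is to unpack the $\mathrm{GL}_{\mathbf{d}}$-action on $\mathrm{Rep}(Q,\mathbf{d})$ in coordinates and differentiate it at the identity, which is the only thing one can do since the lemma is purely a direct computation. I will first record the group action explicitly: writing $\mathrm{Rep}(Q,\mathbf{d})=\bigoplus_{a\in Q_1}\mathrm{Hom}(\mathbb{K}^{d_{s(a)}},\mathbb{K}^{d_{t(a)}})$ and $\mathrm{GL}_{\mathbf{d}}=\prod_i\mathrm{GL}_{d_i}$, the action is $(g\cdot \rho)_a=g_{t(a)}\rho_a g_{s(a)}^{-1}$. The induced left action on the coordinate ring is contragredient, $(g\cdot f)(\rho)=f(g^{-1}\cdot\rho)$, so the differential
$$\tau(X)(f)(\rho)=\frac{d}{dt}\bigg|_{t=0}f(\exp(-tX)\cdot\rho)$$
defines a genuine Lie algebra homomorphism $\tau\colon \mathfrak{gl}_{\mathbf{d}}\to\mathrm{Der}_{\mathbb{K}}\mathcal{O}(\mathrm{Rep}(Q,\mathbf{d}))$, which by Definition \ref{definition:differential operators} lands in $\mathcal{D}(\mathrm{Rep}(Q,\mathbf{d}))$.

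Next, since the coordinate ring is generated by the matrix entry functions $(a)_{jk}$ (see the proposition in \S\ref{Sect:Repspaces}), it suffices to compute $\tau(X)((a)_{jk})$ for each arrow $a$. Differentiating the matrix product $\exp(-tX)_{t(a)}\,\rho_a\,\exp(tX)_{s(a)}$ at $t=0$ gives the entrywise formula
$$\tau(X)((a)_{jk})(\rho)=\bigl(\rho_a X_{s(a)}-X_{t(a)}\rho_a\bigr)_{jk},$$
where $X_l$ denotes the $l$-th block of $X$. Substituting $X=e^i_{pq}$ (so that $X_l=0$ for $l\neq i$ and $X_i$ is the elementary matrix with the single nonzero entry $1$ at position $(q,p)$), the matrix products collapse to products of Kronecker deltas, and only arrows with $s(a)=i$ or $t(a)=i$ contribute.

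Finally, repackaging the result as a derivation by $\tau(e^i_{pq})=\sum_{a,j,k}\tau(e^i_{pq})((a)_{jk})\,\partial/\partial(a)_{jk}$, the two groups of surviving terms—one from $s(a)=i$ and one from $t(a)=i$—yield, after harmlessly relabeling the summation indices, the two sums in the stated formula. Loops at $i$ (arrows with $s(a)=t(a)=i$) contribute to both sums and this is exactly the correct commutator behavior.

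The only real obstacle is bookkeeping: one must fix the sign convention in $\exp(\pm tX)$ so that $\tau$ becomes a homomorphism rather than an anti-homomorphism, and one must consistently interpret ``$e^i_{pq}$ is the $(q,p)$-th elementary matrix'' throughout the matrix multiplications. With these choices pinned down at the outset, the remainder is a mechanical entry-by-entry calculation and coincides with Holland's argument in \cite[Lemma 3.1]{Hol1999}.
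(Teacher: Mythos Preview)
Your proposal is correct and takes exactly the approach the paper indicates: the paper does not give a proof at all, but merely says ``Differentiating the $\mathrm{GL}_{\mathbf{d}}$-action on $\mathrm{Rep}(Q,\mathbf{d})$, one has (see also Holland \cite[Lemma 3.1]{Hol1999})'' and then states the lemma. Your write-up is precisely this differentiation carried out in full, and even cites the same source.
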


By this lemma, the quantum moment map in the case of quiver varieties 
is given by $\mu^{\sharp}_{\hbar} = \tau - \hbar \chi $ for some characters $\chi$
not vanishing on $\ker\tau$.
Thus
$$\frac{ \big(\mathcal{D}_{\hbar}(\mathrm{Rep}(Q,\mathbf{d})) \big)^{\mathrm{GL}_{\mathbf{d}}}}
{ \big(\mathcal{D}_{\hbar}(\mathrm{Rep}(Q,\mathbf{d}))(\tau-\hbar\chi)
(\mathfrak{gl}_{\mathbf{d}}) \big)^{\mathrm{GL}_{\mathbf{d}}}}
$$ 
obtained from $\mathcal{D}_{\hbar}(\mathrm{Rep}(Q,\mathbf{d}))$ is a
quantum Hamiltonian reduction in the sense of Definition \ref{definition:quantum moment map}.

Summarizing the above argument, we have 
the following theorem, due to Holland \cite{Hol1999}
(see also Losev \cite[Lemma 3.3.1]{Los2012} for more details), saying that 
on quiver representations
``the quantization commutes with the reduction".
  
\begin{theorem}[Quantization commutes with reduction] \label{theoremofholland}
Let $Q$ be a finite quiver and $\mathbf{d}$ a dimension vector 
such that the moment map $\mu$ is flat. Let $\tau$ be 
the Lie algebra homomorphism in Lemma \ref{lemma:gl_d action on Rep}. 
For a character $\chi:\ \mathfrak{gl}_{\mathbf{d}} \rightarrow \mathbb{K}^{*}$ 
not vanishing on $\ker\tau$, we have a commutative diagram 
\begin{equation}\label{eq:NCquantizationcommwithreduction}\begin{split}
\xymatrixcolsep{1.5cm}\xymatrix{
\mathcal{D}_{\hbar}(\mathrm{Rep}(Q,\mathbf{d})) \ar@{-->}[r]  
& \displaystyle
\frac{ \big(\mathcal{D}_{\hbar}(\mathrm{Rep}(Q,\mathbf{d})) \big)^{\mathrm{GL}_{\mathbf{d}}}}
{ \big(\mathcal{D}_{\hbar}(\mathrm{Rep}(Q,\mathbf{d}))(\tau - \hbar \chi)
(\mathfrak{gl}_{\mathbf{d}}) \big)^{\mathrm{GL}_{\mathbf{d}}}} \\
\mathbb{K}[T^{*}\mathrm{Rep}(Q,\mathbf{d})] \ar@{-->}[r] \ar@{~>}[u]
& \mathbb{K}[\mathcal{M}_{\mathbf{d}}(Q)]. \ar@{~>}[u] }
\end{split}
\end{equation}
\end{theorem}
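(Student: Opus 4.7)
My plan is to assemble the pieces established earlier in this section. The left vertical arrow is furnished by part~(1) of the corollary immediately preceding, which identifies $\mathcal{D}_{\hbar}(\mathrm{Rep}(Q,\mathbf{d}))$ as a graded quantization of $\mathbb{K}[T^{*}\mathrm{Rep}(Q,\mathbf{d})]$; the right vertical arrow is furnished by part~(2), which identifies the quotient as a graded quantization of $\mathbb{K}[\mathcal{M}_{\mathbf{d}}(Q)]$. The bottom horizontal arrow is the classical Hamiltonian reduction induced by $\mu$, with image $(\mathbb{K}[\mu^{-1}(0)])^{\mathrm{GL}_{\mathbf{d}}}=\mathbb{K}[\mathcal{M}_{\mathbf{d}}(Q)]$; the top horizontal arrow is the quantum Hamiltonian reduction in the sense of Definition~\ref{definition:quantum moment map}, with $\tau-\hbar\chi$ playing the role of the quantum moment map by Lemma~\ref{lemma:gl_d action on Rep}, and the hypothesis that $\chi$ does not vanish on $\ker\tau$ ensuring that this reduction is compatible with taking invariants.

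To verify commutativity I would show that the associated graded of the top-right vertex, as a Poisson algebra, recovers the bottom-right vertex. Using the filtration of Proposition~\ref{prop:filtration of DO}, the principal symbol of $\tau-\hbar\chi$ is $\tau$, since the summand $\hbar\chi$ sits in strictly lower degree with respect to the natural grading on $\mathcal{D}_{\hbar}(\mathrm{Rep}(Q,\mathbf{d}))$. Consequently, the two-sided ideal generated by $(\tau-\hbar\chi)(\mathfrak{gl}_{\mathbf{d}})$ has associated graded equal to the ideal generated by $\tau(\mathfrak{gl}_{\mathbf{d}})$ inside $\mathbb{K}[T^{*}\mathrm{Rep}(Q,\mathbf{d})]$. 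Invoking Holland's proposition cited just before Proposition~\ref{proposition:DO quantizes quiver variety}, the hypothesis that $\mu$ is flat permits associated graded to commute with this quotient, producing $\mathbb{K}[\mu^{-1}(0)]$; taking $\mathrm{GL}_{\mathbf{d}}$-invariants, which is exact because $\mathrm{GL}_{\mathbf{d}}$ is reductive, then yields $\mathbb{K}[\mathcal{M}_{\mathbf{d}}(Q)]$. Compatibility of the Poisson structures is automatic: $\tfrac{1}{\hbar}[\,\cdot\,,\,\cdot\,]$ on $\mathcal{D}_{\hbar}(\mathrm{Rep}(Q,\mathbf{d}))$ restricts on principal symbols to the canonical bracket on $T^{*}\mathrm{Rep}(Q,\mathbf{d})$, and this bracket descends compatibly to the reduction.

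The principal obstacle is precisely the flatness of $\mu$: without it, $\mathrm{gr}$ need not commute with the quotient by $(\tau-\hbar\chi)(\mathfrak{gl}_{\mathbf{d}})$, and the right vertical arrow may fail to be a graded quantization in the sense of Definition~\ref{def:gradedquant}. Once flatness is granted, every remaining step is either formal (reductive invariants commuting with associated graded) or a direct bookkeeping check using the explicit description of $\tau$ in Lemma~\ref{lemma:gl_d action on Rep}. In brief, the theorem amounts to repackaging Holland's filtered calculation into the homogeneous $\mathbb{K}[\hbar]$-algebra framework of Definition~\ref{def:gradedquant}, and I do not anticipate any new difficulty beyond translating the filtered statement into its graded Rees form.
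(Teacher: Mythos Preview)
Your proposal is correct and follows essentially the same approach as the paper: the theorem is presented there as a summary of the preceding results (the two corollaries on graded quantization together with Holland's Propositions cited just before), with the actual argument attributed to Holland \cite{Hol1999} and Losev \cite[Lemma~3.3.1]{Los2012}. You have simply made explicit the assembly of those pieces---identifying each arrow with the appropriate earlier statement and noting that flatness of $\mu$ (via Holland's proposition) is what makes $\mathrm{gr}$ commute with the quotient---which is exactly what the paper intends by ``summarizing the above argument.''
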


\section{Trace maps and proof of the main theorem}\label{sect:fromNC}

In this section, we study two trace maps, at the classical and 
the quantum level respectively, 
connecting
the previously obtained two commutative diagrams
\eqref{eq:quantizationcommwithreduction}
and
\eqref{eq:NCquantizationcommwithreduction},
from which we obtain that ``the quantization commutes with the reduction" 
of quiver algebras
fits the Kontsevich-Rosenberg principle.

\subsection{Quantum trace maps}

The classical trace map on the representation spaces
is introduced in \S\ref{Sect:Repspaces}. In this subsection
we recall the {\it quantum trace map}, due to Schedler, 
from the quantized necklace Lie algebras to the quantization
of quiver representation spaces.

\begin{definition}[Schedler \cite{Sch2005} Section 3.4] \label{definition:quantum trace map}
Suppose $Q$ is a finite quiver, $\mathbf{d}$ is a dimension vector,  
and
$\mathbf{N}(Q)_{\hbar}$ is the quantized necklace Lie algebra. 
Then {\it quantum trace map},
denoted by $\mathrm{Tr}^q$, is a $\mathbb{K}[\hbar]$-linear map 
from $\mathbf{N}(Q)_{\hbar}$ to $\mathcal{D}_{\hbar}
(\mathrm{Rep}(Q,\mathbf{d}))$ such that 
for any element in the form (\ref{general form}), its image is
\begin{equation}
d_{v1}\cdots d_{v_{m}} \sum^{d_{s(a_{i,j})}} _{\forall i,j\,k_{i,j}=1}
\left(\prod_{h=1} ^{N} [a_{\phi ^{-1}(h)}]_{k_{\phi^{-1}(h)},
k_{\phi^{-1}(h)+1}}\right),\label{F.1}
\end{equation}
where $\{ h_{i,j} \}=\{1,2,\cdots ,N\}$, 
$\phi$ is the map such that $\phi(i,j)=h_{i,j}$ 
and for $x\in Q$, $[x]_{i,j}$ is the $(i,j)$-th coordinate function
and $[x^{*}]_{i,j}$ is the
differential operator $\displaystyle\frac{\partial}{\partial [x]_{j,i}}$. 
\end{definition}

It is proved in Schedler \cite[Section 3.4]{Sch2005} that $\mathrm{Tr}^q$
in the above definition does not depend on choice
of the representatives of the elements in $\mathbf{N}(Q)_{\hbar}$.

From the above definition, 
if the heights in \eqref{general form} increase 
from the left to the right, 
then its image under $\mathrm{Tr}^{q}$ is
\begin{equation}\label{quantum trace formula}
d_{v1}\cdots d_{v_{m}} \prod_{i=1} tr\left(\prod_{j=1}[a_{ij}]\right).
\end{equation}
With this formula, we also see that the images of quantum trace map are 
$\mathrm{GL}_{\mathbf{d}}$-invariant; in other words, we have the following:

\begin{proposition}\label{prop:image q.tr G-inv}
Let $Q$ be a finite quiver, and $\mathbf{d}$ is a dimension vector. 
Then we have $$\mathrm{Tr}^{q} (\mathbf{N}(Q)_{\hbar}) 
\subset (\mathcal{D}_{\hbar}(\mathrm{Rep}(Q,\mathbf{d})))^{\mathrm{GL}_{\mathbf{d}}}.$$
\end{proposition}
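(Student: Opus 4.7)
The plan is to reduce the computation of $\mathrm{Tr}^q$ on an arbitrary element of $\mathbf{N}(Q)_\hbar$ to the case of a ``height-ordered'' representative, and then exhibit the image as a product of $\mathrm{GL}_{\mathbf{d}}$-invariant factors.

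First, by Remark \ref{remark:general form2}, every element $x \in \mathbf{N}(Q)_\hbar$ can be represented as a $\mathbb{K}[\hbar]$-linear combination of monomials of the form \eqref{general form} whose heights $h_{i,j}$ are strictly increasing in the lexicographic order on the pairs $(i,j)$. Since $\mathrm{Tr}^q$ is well defined on equivalence classes (as recalled after Definition \ref{definition:quantum trace map}), it suffices to check $\mathrm{GL}_{\mathbf{d}}$-invariance of $\mathrm{Tr}^q(X)$ for each such height-ordered monomial $X$. For such $X$, formula \eqref{quantum trace formula} specializes $\mathrm{Tr}^q(X)$ to
\[
\mathrm{Tr}^q(X) \;=\; d_{v_1}\cdots d_{v_m}\cdot \prod_{i=1}^{k}\mathrm{tr}\bigl([a_{i,1}]\,[a_{i,2}]\cdots [a_{i,l_i}]\bigr),
\]
so it reduces to checking that each trace factor is invariant (the $d_{v_j}$ are scalars).

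Next I would spell out the $\mathrm{GL}_{\mathbf{d}}$-action, following Lemma \ref{lemma:gl_d action on Rep}. For $g=(g_s)_{s\in Q_0}\in\mathrm{GL}_{\mathbf{d}}$ acting on $\mathrm{Rep}(Q,\mathbf{d})$ by conjugation, the matrix of coordinate functions $[a]$ attached to an arrow $a\in Q_1$ satisfies $g\cdot [a] = g_{t(a)}\,[a]\,g_{s(a)}^{-1}$; dually, the matrix $[a^*]$, whose entries are the derivations $\partial/\partial [a]_{j,i}$, satisfies $g\cdot [a^*] = g_{s(a)}\,[a^*]\,g_{t(a)}^{-1}$, as follows from the chain rule applied to the transformation of $[a]_{j,i}$. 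In both cases, the transformation law is ``conjugation by the group elements at the two endpoints of the underlying edge in $\overline Q$.''

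Now apply this to the cyclic product in a single trace factor. Each $a_{i,j}$ is an edge of $\overline Q$, and by the definition of the generalized cyclic path algebra the sequence $a_{i,1}\cdots a_{i,l_i}$ is a cycle in $\overline Q$, so $s(a_{i,j+1}) = t(a_{i,j})$ for all $j$ (indices modulo $l_i$). Multiplying out the transformed matrices, consecutive factors of $g_s$ and $g_s^{-1}$ cancel telescopically; after one full cycle one is left with a single extra conjugation by a group element attached to the base vertex, which cyclicity of the trace absorbs. Hence each $\mathrm{tr}([a_{i,1}]\cdots [a_{i,l_i}])$ is fixed by $\mathrm{GL}_{\mathbf{d}}$, and so is $\mathrm{Tr}^q(X)$.

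The main subtlety I would want to handle carefully is that the ``matrix product'' in each trace is a product in the non-commutative algebra $\mathcal{D}_\hbar(\mathrm{Rep}(Q,\mathbf{d}))$, so one must order the factors consistently with the height ordering before applying the group action; this is precisely why the reduction to height-ordered representatives in the first step is indispensable. Once that normal form is in place, the invariance is a formal consequence of the cyclicity of the trace together with the endpoint-conjugation transformation law above, and the proposition follows.
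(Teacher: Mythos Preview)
Your proposal is correct and follows essentially the same route as the paper: reduce to height-ordered representatives so that $\mathrm{Tr}^q(X)$ becomes a product of trace factors via \eqref{quantum trace formula}, write down the conjugation-type action of $\mathrm{GL}_{\mathbf{d}}$ on each matrix $[x]$ for $x\in\overline Q_1$, and conclude by telescoping around each cycle together with cyclicity of the trace. Your write-up is in fact a bit more explicit than the paper's about why the height-ordering step is needed (the ordering of non-commuting factors in $\mathcal D_\hbar$); note that your convention for the side carrying the inverse in $g\cdot[a]$ is opposite to the paper's \eqref{eq:Lieact}, but this is immaterial to the argument.
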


\begin{proof}
Without loss of generality, we choose 
	$$
	X = \widehat{x_{1}} \ast \widehat{x_{2}} \ast \cdots \ast \widehat{x_{r}}
	$$
where for each $i$, $x_{i} = x_{i,1}\cdots x_{i,j_{i}} \in (\mathbb{K}\overline{Q})_{\natural}$. 
By \eqref{quantum trace formula},
$$\mathrm{Tr}^{q}(X) = \prod_{i} tr([x_{i,1}] [x_{i,2}]\cdots ).$$
For any $a=(a_{i}) \in \mathrm{GL}_{\mathbf{d}}$, by 
\eqref{formula: group action on differential operators} we have
\begin{align*}
		a. \mathrm{Tr}^{q}(X) & = \prod_{i} a. tr\big([x_{i,1}] [x_{i,2}]\cdots  \big) 
		= \prod_{i} \Big(\sum_{s,k_{1},\cdots } a.[x_{i,1}]_{s,k_{1}} \cdots  a.[x_{i,j_{i}}]_{k_{j_{i}},s} \Big) .
\end{align*}
For example, 
for $a$ as above and $x \in \overline{Q}$,
\begin{equation}\label{eq:Lieact}
\begin{split}
a.[x]=\left\{
\begin{array}{ll}
a_{t(x)}^{-1} [x] a_{s(x)}, &\mbox{if}\; x \in Q_{1},\\[2mm]
a_{s(x)} ^{-1} [x] a_{t(x)}, &\mbox{if}\; x \in \overline{Q}_{1} \diagdown Q_{1}.
\end{array}
\right.
\end{split}
\end{equation}
On the right hand side of \eqref{eq:Lieact}, all products are matrix products.
Consequently, we have $a. \mathrm{Tr}^{q}(X) = \mathrm{Tr}^{q}(X)$.
\end{proof}

\begin{lemma}\label{lemma: q.trace preserves ideals}
Suppose $Q$ is a finite quiver, $\mathbf{d}$ is a dimension vector. Then there is a unique character 
$\chi_{0}$ of $\mathfrak{gl}_{\mathbf{d}}$
such that
\begin{equation}\label{eq:chi0}
\mathrm{Tr}^{q}\Big( \mathbf{N}(Q)_{\hbar}\widehat{(\mathbb{K}\overline{Q}
\sum_{a\in Q}[a,a^{*}])_{\natural}} \mathbf{N}(Q)_{\hbar} \Big) \subset 
\Big( \mathcal{D}_{\hbar}(\mathrm{Rep}(Q,\mathbf{d}))\ (\tau- \hbar \chi_{0}) 
(\mathfrak{gl}_{\mathbf{d}}) \Big)^{\mathrm{GL}_{\mathbf{d}}}.
\end{equation}
\end{lemma}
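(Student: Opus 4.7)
The plan is to verify the containment by a direct calculation in $\mathcal{D}_\hbar(\mathrm{Rep}(Q,\mathbf{d}))$, reducing first to basic cyclic-word generators and then extracting $\chi_0$ from the normal-ordering corrections. First I would show it suffices to treat elements of the form $\widehat{y\mathbf{w}}$ with $y \in e_i\mathbb{K}\overline{Q}e_i$ a cyclic path based at a single vertex $i$. Formula \eqref{quantum trace formula} is multiplicative (placing one element above another in heights corresponds to multiplication in $\mathcal{D}_\hbar$), so
\[
\mathrm{Tr}^{q}(X\ast\widehat{y\mathbf{w}}\ast Z) \;=\; \mathrm{Tr}^{q}(X)\cdot\mathrm{Tr}^{q}(\widehat{y\mathbf{w}})\cdot\mathrm{Tr}^{q}(Z).
\]
By Proposition \ref{prop:image q.tr G-inv} each $\mathrm{Tr}^q(Z)$ is $\mathfrak{gl}_{\mathbf{d}}$-invariant, hence by Definition \ref{definition:quantum moment map} commutes with every $(\tau-\hbar\chi_0)(v)$; thus a generator $(\tau-\hbar\chi_0)(v)$ on the right of $\mathrm{Tr}^q(\widehat{y\mathbf{w}})$ can be freely transported past $\mathrm{Tr}^q(Z)$, and the claim for the full two-sided ideal reduces to showing $\mathrm{Tr}^q(\widehat{y\mathbf{w}})\in\mathcal{D}_\hbar\cdot(\tau-\hbar\chi_0)(\mathfrak{gl}_{\mathbf{d}})$.

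For the key computation I would choose heights on $\widehat{y\cdot e_i\mathbf{w}\,e_i}$ that strictly increase left to right (allowed by Remark \ref{remark:general form2}), so that formula \eqref{quantum trace formula} produces
\[
\mathrm{Tr}^{q}\bigl(\widehat{y\cdot e_i\mathbf{w}\,e_i}\bigr) \;=\; \sum_{a:\,t(a)=i}\mathrm{tr}(Y\,X_aX_{a^*}) \;-\; \sum_{a:\,s(a)=i}\mathrm{tr}(Y\,X_{a^*}X_a),
\]
with $Y$ the matrix of entries of $y_1\cdots y_k$ and all products taken in $\mathcal{D}_\hbar$. In $\mathcal{D}_\hbar$ the product $X_aX_{a^*}$ is already normal-ordered (coordinates on the left, derivatives on the right), while using the canonical commutation $[\,[a^*]_{p,j},\,[a]_{j,q}\,]=\hbar\,\delta_{pq}$ one finds
\[
(X_{a^*}X_a)_{qp} \;=\; \sum_j [a]_{j,p}\,[a^*]_{q,j} \;+\; \hbar\,d_{t(a)}\,\delta_{pq}.
\]
Comparing the normal-ordered pieces entry-by-entry with Lemma \ref{lemma:gl_d action on Rep} identifies them with $-\tau(e^i_{pq})$, and the residual $\hbar$-correction packages into the character
\[
\chi_0(e^i_{pq}) \;:=\; -\,\delta_{pq}\sum_{a:\,s(a)=i} d_{t(a)},
\]
which is a character since it is diagonal at each vertex and therefore factors through the abelianization of $\mathfrak{gl}_{\mathbf{d}}$. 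Assembling gives
\[
\mathrm{Tr}^{q}\bigl(\widehat{y\cdot e_i\mathbf{w}\,e_i}\bigr) \;=\; -\sum_{p,q} Y_{pq}\cdot(\tau-\hbar\chi_0)(e^i_{pq}) \;\in\; \mathcal{D}_\hbar(\mathrm{Rep}(Q,\mathbf{d}))\cdot(\tau-\hbar\chi_0)(\mathfrak{gl}_{\mathbf{d}}),
\]
which combined with the first paragraph yields the containment \eqref{eq:chi0}. Uniqueness of $\chi_0$ is forced by specializing to $y=e_i$: the identity above then determines $\chi_0(\mathrm{Id}_{d_i})$ for each $i$, and a character of $\mathfrak{gl}_{\mathbf{d}}$ is determined by its values on the block identities.

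The main technical obstacle is the bookkeeping in the second step: one must distinguish carefully which of $X_aX_{a^*}$ and $X_{a^*}X_a$ is already normal-ordered (only the former is) and then track the $\hbar\,\delta_{pq}$-corrections, whose coefficient is the size $d_{t(a)}$ of the contracted index. A secondary subtlety is that the lift $\widehat{y\mathbf{w}}$ is defined only up to Schedler's skein relations, but $\mathrm{Tr}^q$ is independent of the chosen representative (as recorded just after Definition \ref{definition:quantum trace map}), so we may always work with the strictly increasing-height lift demanded by formula \eqref{quantum trace formula}.
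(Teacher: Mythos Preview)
Your proposal is correct and follows essentially the same approach as the paper: both reduce to computing $\mathrm{Tr}^q(\widehat{y\mathbf{w}})$ for a cyclic word $y$ ending at a fixed vertex, perform the same normal-ordering of $[a^*][a]$ to extract the $\hbar d_{t(a)}\delta_{pq}$ correction, identify the normal-ordered piece with $-\tau(e^k_{pq})$ via Lemma~\ref{lemma:gl_d action on Rep}, and then handle the two-sided ideal by using $\mathrm{GL}_{\mathbf{d}}$-invariance of the remaining trace factors to commute them past $(\tau-\hbar\chi_0)(\mathfrak{gl}_{\mathbf{d}})$. Your organization is slightly more streamlined (you reduce to the key case first, then compute), whereas the paper computes first and handles the right factor afterwards, but the arguments are otherwise identical.
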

\begin{proof}

The character $\chi_{0}$ is obtained as follows. 
Since $\mathrm{Tr}^{q}$ is linear, without loss of generality, 
we choose $X=x_{1}\cdots x_{r} \sum_{a\in Q} (a a^{*} - a^{*}a) 
\in (\mathbb{K}\overline{Q}\sum_{a\in Q}[a,a^{*}])_{\natural}$ and assume $s(x_{r}) = k$. Thus,
\begin{align*}
X=x_{1}\cdots x_{r} \sum_{a\in Q} (a a^{*} - a^{*}a) &
	=\sum_{a\in Q,t(a)=k} x_{1}\cdots x_{r} a a^{*} - \sum_{a\in Q,s(a)=k} x_{1}\cdots x_{r} a^{*}a.
\end{align*}
Now, lift $X$ to $\mathbf{N}(Q)_{\hbar}$ (see the discussion
after Proposition \ref{proposition:PBW}), and suppose the lifting
\begin{displaymath}
\widehat{X}=\sum_{a\in Q,t(a)=k} \widehat{x_{1}\cdots x_{r} a a^{*}} 
- \sum_{a\in Q,s(a)=k} \widehat{x_{1}\cdots x_{r} a^{*}a}.
\end{displaymath}
Applying $\mathrm{Tr}^{q}$ to $\widehat{X}$, we get
\begin{align*}
	\mathrm{Tr}^{q}(\widehat{X})
	=&\sum_{a\in Q,t(a)=k} \sum_{l_{1},..,l_{r+2}} [x_{1}]_{l_{1},l_{2}}\cdots [x_{r}]_{l_{r},l_{r+1}} [a]_{l_{r+1},l_{r+2}} [a^{*}]_{l_{l+2},l_{1}}
	\\
	&- \sum_{a\in Q,s(a)=k} \sum_{l_{1},..,l_{r+2}} [x_{1}]_{l_{1},l_{2}}\cdots [x_{r}]_{l_{r},l_{r+1}} [a^{*}]_{l_{r+1},l_{r+2}} [a]_{l_{r+2},l_{1}}
	\\
	=&\sum_{l_{1},..,l_{r+1}}[x_{1}]_{l_{1},l_{2}}\cdots [x_{r}]_{l_{r},l_{r+1}} \sum_{a\in Q,t(a)=k} \sum_{l_{r+2}} [a]_{l_{r+1},l_{r+2}} [a^{*}]_{l_{r+2},l_{1}}
	\\
	&-\sum_{l_{1},..,l_{r+1}}[x_{1}]_{l_{1},l_{2}}\cdots [x_{r}]_{l_{r},l_{r+1}} \sum_{a\in Q,s(a)=k} \sum_{l_{r+2}}\hbar \delta^{l_{1}} _{l_{r+1}} + [a]_{l_{r+2},l_{1}} [a^*]_{l_{r+1},l_{r+2}}
	\\
	=&\sum_{l_{1},..,l_{r+1}}[x_{1}]_{l_{1},l_{2}}\cdots [x_{r}]_{l_{r},l_{r+1}} \Big(  \sum_{a\in Q,t(a)=k} \sum_{l_{r+2}} [a]_{l_{r+1},l_{r+2}} \frac{\partial}{\partial [a]_{l_1,l_{r+2}}}
	\\
	 &-\sum_{a\in Q,s(a)=k} \sum_{l_{r+2}} [a]_{l_{r+2},l_{1}} \frac{\partial}{\partial [a]_{l_{r+2},l_{r+1}}} - \sum_{a\in Q,s(a)=k} d_{t(a)} \delta^{l_{1}} _{l_{r+1}} \hbar \Big)
	\\
	=&\sum_{l_{1},..,l_{r+1}}[x_{1}]_{l_{1},l_{2}}\cdots [x_{r}]_{l_{r},l_{r+1}} \Big(-\tau (e^{k}_{l_{1},l_{r+1}}) - \sum_{a\in Q,s(a)=k}d_{t(a)}\delta^{l_{1}}_{l_{r+1}} \hbar\Big).
\end{align*}
From the above identity, we see that the character is
\begin{displaymath}
	\chi_{0}=-\sum_{k \in Q_{0}}\Big(\sum_{a\in Q,s(a)=k} d_{t(a)}\Big)tr_{k},
\end{displaymath}
where $tr_{k}$ is taking the trace of the $k$-th matrix. Also, this
$\chi_0$ is the unique character satisfying \eqref{eq:chi0}.
By $(\ref{quantum trace formula})$ and the definition of 
the product on $\mathbf{N}(Q)_{\hbar}$, 
we have that $$\mathrm{Tr}^{q} \Big(\mathbf{N}(Q)_{\hbar}\widehat{(\mathbb{K}
\overline{Q}\sum_{a\in Q}[a,a^{*}])_{\natural}} \Big)\subset 
\Big( \mathcal{D}_{\hbar}(\mathrm{Rep}(Q,\mathbf{d}))\ 
(\tau-\hbar\chi_{0}) (\mathfrak{gl}_{\mathbf{d}})\Big)^{\mathrm{GL}_{\mathbf{d}}}.$$

We next show the quantum trace of $\widehat{(\mathbb{K}\overline{Q}
\sum_{a\in Q}[a,a^{*}])_{\natural}}\mathbf{N}(Q)_{\hbar}$ also lies in 
$\Big( \mathcal{D}_{\hbar}(\mathrm{Rep}(Q,\mathbf{d}))\ (\tau-\hbar \chi_{0}) (\mathfrak{gl}_{\mathbf{d}}) \Big)^{\mathrm{GL}_{\mathbf{d}}}$.
Without loss of generality, we prove this statement 
for $\widehat{X}\&(y_{1},g_{1})\cdots (y_{s},g_{s})$ with $g_{1}<\cdots
<g_{s}$.

In fact, by $(\ref{quantum trace formula})$ we have that
\begin{align*}
&\mathrm{Tr}^{q}\Big(\widehat{X}\&(y_{1},g_{1})\cdots (y_{s},g_{s}) \Big)\\
&= \mathrm{Tr}^{q}(\widehat{X})\mathrm{Tr}^{q}([y_{1}]\cdots [y_{s}])\\
&=\Big(\sum_{l_{1},..,l_{r+1}}[x_{1}]_{l_{1},l_{2}}\cdots 
[x_{r}]_{l_{r},l_{r+1}} \Big(-\tau (e^{k}_{l_{1},l_{r+1}}) 
- \sum_{a\in Q,s(a)=k}d_{s(a)}\delta^{l_{1}}_{l_{r+1}} 
\hbar \Big) \Big) \\
&\qquad\qquad tr([y_{1}]\cdots [y_{s}])\\
&=
\sum_{l_{1},..,l_{r+1}}[x_{1}]_{l_{1},l_{2}}
\cdots [x_{r}]_{l_{r},l_{r+1}} \Big(-\tau (e^{k}_{l_{1},l_{r+1}})\, tr([y_{1}]\cdots [y_{s}]) \\
&\qquad\qquad
-\sum_{a\in Q,s(a)=k}d_{s(a)}\delta^{l_{1}}_{l_{r+1}}tr([y_{1}]\cdots [y_{s}])\hbar \Big).
\end{align*}
Since $tr([y_{1}\cdots y_{s}])$ is $\mathrm{GL}_{\mathbf{d}}$- and hence also 
$\mathfrak{gl}_{\mathbf{d}}$-invariant,  
$\tau (e^{k}_{l_{1},l_{r+1}})$ and $tr([y]_{1}\cdots[y_{s}])$ commute with each other. 
Therefore
\begin{align*}
&\mathrm{Tr}^{q} \Big(\widehat{X}\&(y_{1},g_{1})\cdots (y_{s},g_{s}) \Big) \\
&=\Big(\sum_{l_{1},..,l_{r+1}}[x_{1}]_{l_{1},l_{2}}\cdots [x_{r}]_{l_{r},l_{r+1}} 
\Big(-\tau (e^{k}_{l_{1},l_{r+1}}) - \sum_{a\in Q,s(a)=k}d_{s(a)}
\delta^{l_{1}}_{l_{r+1}} \hbar \Big) \Big) \ tr([y_{1}]\cdots [y_{s}])\\
&=\sum_{l_{1},..,l_{r+1}}[x_{1}]_{l_{1},l_{2}}\cdots [x_{r}]_{l_{r},l_{r+1}}\ 
tr([y_{1}]\cdots [y_{s}])\, 
\big(-\tau (e^{k}_{l_{1},l_{r+1}}) \big) - \sum_{a\in Q,s(a)=k}
d_{s(a)}\delta^{l_{1}}_{l_{r+1}} \hbar ).
\end{align*}	
Thus 
$\mathrm{Tr}^{q} \Big( \mathbf{N}(Q)_{\hbar}\widehat{(\mathbb{K}\overline{Q}
\sum_{a\in Q}[a,a^{*}])_{\natural}} \mathbf{N}(Q)_{\hbar} \Big) 
\subset \Big( \mathcal{D}_{\hbar}(\mathrm{Rep}(Q,\mathbf{d}))\ 
(\tau-\hbar \chi_{0}) (\mathfrak{gl}_{\mathbf{d}}) \Big)^{\mathrm{GL}_{\mathbf{d}}}$.
\end{proof}

\begin{proposition}
\label{proposition:noncom quantum moment map gives quantum moemnt map}
Let $Q$ be a finite quiver and let  $\widehat{\mathbf{w}}$ 
be the non-commutative quantum moment map. 
For a fixed dimension vector $\mathbf{d} \in \mathbb{N}^{Q_{0}}$, the map 
$\mu^{\sharp}_{\widehat{\mathbf{w}}}: \, \mathcal{U}_{\hbar}
(\mathfrak{gl}_{\mathbf{d}}) \rightarrow \mathcal{D}_{\hbar}(\mathrm{Rep}(Q,\mathbf{d}))$
sending an arbitrary  $(g_{i})\in \mathfrak{gl}_{\mathbf{d}}$ to 
$-tr\big(  [\widehat{\mathbf{w}}](g_{i})  \big) $
is a quantum moment map.
\end{proposition}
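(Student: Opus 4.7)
The plan is to verify directly the two conditions in Definition \ref{definition:quantum moment map} for $\mu^\sharp_{\widehat{\mathbf{w}}}$: that $\mu^\sharp_{\widehat{\mathbf{w}}}(\mathfrak{gl}_{\mathbf{d}})$ sits in the degree-$1$ piece $(\mathcal{D}_\hbar(\mathrm{Rep}(Q,\mathbf{d})))_1$, and that the commutator $\tfrac{1}{\hbar}\bigl[\mu^\sharp_{\widehat{\mathbf{w}}}(v),\,-\bigr]$ reproduces the $\mathfrak{gl}_{\mathbf{d}}$-action of $v$ on $\mathcal{D}_\hbar(\mathrm{Rep}(Q,\mathbf{d}))$. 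My strategy is to test both conditions on the elementary matrix basis $\{e^k_{pq}\}$ of $\mathfrak{gl}_{\mathbf{d}}$ and compare the resulting expression with the explicit description of $\tau$ from Lemma \ref{lemma:gl_d action on Rep}.

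The key step is a direct computation closely paralleling the one in the proof of Lemma \ref{lemma: q.trace preserves ideals}. Expanding $\widehat{\mathbf{w}} = \sum_{a\in Q_1}\bigl((a,1)(a^*,2) - (a^*,1)(a,2)\bigr)$ and reading off $[\widehat{\mathbf{w}}]_k$ in the $k$-th block as a $d_k \times d_k$ matrix of differential operators whose order is dictated by the heights, the quantity $-\mathrm{tr}([\widehat{\mathbf{w}}]\,e^k_{pq})$ unpacks to an explicit sum involving the coordinate functions $[a]_{ij}$ and their partial derivatives. Substituting $[a^*]_{ij} = \partial/\partial [a]_{ji}$ and normal-ordering the offending term via the commutation relation in Definition \ref{def:homogenized DO}(4) produces an $\hbar$-correction proportional to $\delta_{pq}\sum_{a:\,t(a)=k} d_{s(a)}$. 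After collecting everything I expect the identity
\[\mu^\sharp_{\widehat{\mathbf{w}}}(e^k_{pq}) \;=\; \tau(e^k_{pq}) \,-\, \hbar\,\chi_0(e^k_{pq}),\]
where $\chi_0 = -\sum_{k\in Q_0}\bigl(\sum_{a\in Q,\,s(a)=k}d_{t(a)}\bigr)\mathrm{tr}_k$ is precisely the character already produced in Lemma \ref{lemma: q.trace preserves ideals}.

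Granting this identity, both conditions follow easily. The degree condition is immediate because $\tau(e^k_{pq})$ is a first-order operator and $\hbar\,\chi_0(e^k_{pq})$ is a scalar multiple of $\hbar$, so both live in degree $1$ under the grading $\deg\hbar = 1$ of Definition \ref{def:gradedquant}. For the bracket condition, $\hbar\,\chi_0(v)$ is central in $\mathcal{D}_\hbar(\mathrm{Rep}(Q,\mathbf{d}))$, so
\[\tfrac{1}{\hbar}\bigl[\mu^\sharp_{\widehat{\mathbf{w}}}(v),\,D\bigr] \;=\; \tfrac{1}{\hbar}\bigl[\tau(v),\,D\bigr];\]
the right-hand side equals the action $v\cdot D$ described in \eqref{formula: Lie algebra action on differential operators}, because in the homogenized algebra $\mathcal{D}_\hbar$ the commutator carries the extra factor of $\hbar$ coming from Definition \ref{def:homogenized DO}(3)-(4). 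Finally, $\mu^\sharp_{\widehat{\mathbf{w}}}$ extends multiplicatively to an algebra map $\mathcal{U}_\hbar(\mathfrak{gl}_{\mathbf{d}}) \to \mathcal{D}_\hbar(\mathrm{Rep}(Q,\mathbf{d}))$ because $\tau$ is a Lie algebra homomorphism and $\chi_0$, being a character, vanishes on brackets, so the defining relation $x*y - y*x = \hbar[x,y]$ of $\mathcal{U}_\hbar$ is preserved.

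The hardest part will be the bookkeeping inside the central computation. Several conventions need to be reconciled simultaneously---the source/target convention for arrows in $\overline{Q}$ versus $Q$, the transposed indexing $[a^*]_{ij} = \partial/\partial[a]_{ji}$, the $(q,p)$-versus-$(p,q)$ interpretation of $e^k_{pq}$ fixed in Lemma \ref{lemma:gl_d action on Rep}, and the effect of the heights in $\mathbf{N}(Q)_\hbar$ on the order of differential operators---so that the character produced by normal-ordering matches the one in Lemma \ref{lemma: q.trace preserves ideals} with the correct sign. Conceptually, however, no new ideas are needed beyond those already employed in Lemmas \ref{lemma:gl_d action on Rep} and \ref{lemma: q.trace preserves ideals}: the content is just a careful specialization of that earlier calculation from cyclic traces to matrix entries.
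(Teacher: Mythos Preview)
Your approach is essentially the same as the paper's: a direct computation of $-\mathrm{tr}\bigl([\widehat{\mathbf{w}}]\,e^k_{pq}\bigr)$ showing it equals $(\tau-\hbar\chi_0)(e^k_{pq})$, after which the quantum moment map conditions follow immediately. One minor bookkeeping slip: the $\hbar$-correction you describe mid-argument as $\delta_{pq}\sum_{a:\,t(a)=k}d_{s(a)}$ should have $s(a)=k$ and $d_{t(a)}$, exactly as your own final formula for $\chi_0$ correctly records.
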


\begin{proof}
Suppose $e^{i}_{p,q}$ is the $(q,p)$-th elementary matrix in the $i$-the summand of
$\mathfrak{gl}_{\mathbf{d}}$; then
\begin{align*}
	[\widehat{\mathbf{w}}]e^{i}_{p,q}
	&=\sum_{t(a) = i} [a][a^{*}]e^{i}_{p,q} - \sum_{s(a)=i}[a^{*}] [a] e^{i}_{p,q}\\
	&=\sum_{t(a) =i} \sum_{k,l,u,v}[a]_{k,l}[a^{*}]_{u,v} e^{i}_{k,l}e^{i}_{u,v}e^{i}_{p,q} 
	- \sum_{ s(a) = i} \sum_{k,l,u,v}[a^{*}]_{k,l}[a]_{u,v}e^{i}_{k,l}e^{i}_{u,v}e^{i}_{p,q}\\
	&=\sum_{t(a)=i}\sum_{k,l} [a]_{k,l}[a^{*}]_{l,p} e^{i}_{k,q} - \sum_{s(a) = i} \sum_{k,l} [a^{*}]_{k,l} [a]_{l,p} e^{i}_{k,q}.
\end{align*}
Taking the trace on both sides of the above equality, we have
\begin{align*}
	tr([\mathbf{w}]e^{i}_{p,q})
	&=tr\Big( \sum_{t(a)=i}\sum_{k,l} [a]_{k,l}[a^{*}]_{l,p} e^{i}_{k,q} - \sum_{s(a) = i} \sum_{k,l} [a^{*}]_{k,l} [a]_{l,p} e^{i}_{k,q} \Big)\\
	&= \sum_{t(a)=i}\sum_{l} [a]_{q,l}[a^{*}]_{l,p} - \sum_{s(a) = i} \sum_{l} [a^{*}]_{q,l} [a]_{l,p}\\
	&= \sum_{t(a)= i} \sum_{l} [a]_{q,l}\frac{\partial}{\partial (a)_{p,l}} - \sum_{s(a) = i} \sum_{l} \frac{\partial }{\partial (a)_{l,q}} [a]_{l,p}\\
	&=\sum_{t(a)= i} \sum_{l} [a]_{q,l}\frac{\partial}{\partial (a)_{p,l}} - \sum_{s(a) = i} \sum_{l} \big([a]_{l,p} \frac{\partial}{\partial (a)_{l,q}} + \hbar \delta^{p}_{q} \big)\\
	&=\sum_{t(a)= i} \sum_{l} [a]_{q,l}\frac{\partial}{\partial (a)_{p,l}} - \sum_{s(a) = i} \sum_{l} \big([a]_{l,p} \frac{\partial}{\partial (a)_{l,q}} - \hbar\sum_{s(a) = i} \sum_{l} \delta^{p}_{q} \big)\\
	&=\sum_{t(a)= i} \sum_{l} [a]_{q,l}\frac{\partial}{\partial (a)_{p,l}} - \sum_{s(a) = i} \sum_{l} \big([a]_{l,p} \frac{\partial}{\partial (a)_{l,q}} - \hbar\sum_{s(a) = i} d_{t(a)}\delta^{p}_{q} \big).
\end{align*}
Therefore $\mu^{\sharp}_{\widehat{\mathbf{w}}} (e^{i}_{pq})$ is
\begin{displaymath}
\sum_{s(a) = i} \sum_{l} [a]_{l,p} \frac{\partial}{\partial (a)_{l,q}}  
- \sum_{t(a)= i} \sum_{l} [a]_{q,l}\frac{\partial}{\partial (a)_{p,l}} 
+ \hbar \sum_{s(a) = i} \sum_{l} d_{t(a)}\delta^{p}_{q},
\end{displaymath}
which exactly equals $(\tau - \hbar \chi_{0})(e^{i}_{pq})$ 
(see Lemma \ref{lemma:gl_d action on Rep}
for the definition of $\tau$
and Lemma \ref{lemma: q.trace preserves ideals} for definition of $\chi_{0}$). 
This gives the infinitesimal action of $e^{i}_{pq}$, 
and by Definition \ref{definition:quantum moment map}, 
$\mu^{\sharp}_{\widehat{\mathbf{w}}}$ 
is the quantum moment map.
\end{proof}

\subsection{Reduction commutes with the trace map}

This subsection is to show that the classical and quantum trace maps 
preserve the classical and quantum reductions respectively.

First, it is proved by Crawley-Boevey, Etingof and Ginzburg 
in \cite{CBEG2007} that the
non-commutative reduction induces the Hamiltonian reduction, 
which we state as follows. 

\begin{proposition}[Reductions commute with trace maps]
Suppose $Q$ is a finite quiver, $\mathbf{d} \in \mathbb{N}^{Q_{0}}$ is a dimension vector. 
Then we have the following commutative diagram
\begin{equation}\label{eq:reductioncommwithtrace1}\begin{split}
\xymatrixcolsep{4pc}
\xymatrix{
(\mathbb{K}\overline{Q})_{\natural} \ar[r]^-{\mathrm{Tr}} \ar@{-->}[d] 
& \mathbb{K}[T^{*}\mathrm{Rep}(Q,\mathbf{d})] \ar@{-->}[d]\\
(\Pi Q)_{\natural} \ar[r]^-{\mathrm{Tr}} & \mathbb{K}[\mathcal{M}_{\mathbf{d}}(Q)].
}\end{split}
\end{equation}
\end{proposition}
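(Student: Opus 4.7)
The plan is to deduce the commutativity from a single categorical fact: the classical trace map $\mathrm{Tr}: A_\natural \to \mathbb{K}[\mathrm{Rep}(A,\mathbf{d})]$ of Definition \ref{definition:trace map} is natural in the $R$-algebra $A$. More precisely, an $R$-algebra homomorphism $\phi: A\to B$ induces a morphism of representation schemes $\mathrm{Rep}(\phi):\mathrm{Rep}(B,\mathbf{d})\to\mathrm{Rep}(A,\mathbf{d})$, $\rho'\mapsto \rho'\circ\phi$, and unwinding the definition of $\mathrm{Tr}$ gives
$$\mathrm{Rep}(\phi)^{\ast}\big(\mathrm{Tr}(\bar a)\big)(\rho')=\mathrm{trace}\big(\rho'(\phi(a))\big)=\mathrm{Tr}(\overline{\phi(a)})(\rho'),$$
so that the obvious square commutes for every $\phi$. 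In particular $\mathrm{Tr}$ descends from $A$ to $A_\natural$ since $\mathrm{trace}$ kills commutators.

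The proof then proceeds in three steps. First, I apply this naturality to the canonical projection $\pi:\mathbb{K}\overline{Q}\twoheadrightarrow\Pi Q=\mathbb{K}\overline{Q}/(\mathbf{w})$, which identifies the left-hand vertical arrow of the target square with $\pi_\natural$ and, tentatively, the right-hand vertical arrow with $\mathrm{Rep}(\pi)^\ast$ on coordinate rings. Second, I invoke the corollary asserting $\mathrm{Rep}(\Pi^\lambda Q,V)=\mu^{-1}(\lambda)$, specialised to $\lambda=0$, to recognise $\mathrm{Rep}(\Pi Q,\mathbf{d})$ as the closed subscheme $\mu^{-1}(0)\subset\mathrm{Rep}(\overline{Q},\mathbf{d})=T^{\ast}\mathrm{Rep}(Q,\mathbf{d})$, so that $\mathrm{Rep}(\pi)^\ast$ becomes restriction of regular functions to $\mu^{-1}(0)$. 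Third, I observe that $\mathrm{Tr}(\bar a)$ is automatically $\mathrm{GL}_{\mathbf{d}}$-invariant (by conjugation invariance of $\mathrm{trace}$) and that $\mathrm{GL}_{\mathbf{d}}$ preserves $\mu^{-1}(0)$; hence the restriction factors through $\mathbb{K}[\mu^{-1}(0)]^{\mathrm{GL}_{\mathbf{d}}}=\mathbb{K}[\mathcal{M}_{\mathbf{d}}(Q)]$, which is precisely the target on the lower right. The composition of these three identifications matches the dashed reduction arrow, and the commutativity of the naturality square then specialises to the desired diagram.

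The only non-routine input is the identification $\mathbb{K}[\mathcal{M}_{\mathbf{d}}(Q)]=\big(\mathbb{K}[T^{\ast}\mathrm{Rep}(Q,\mathbf{d})]/(\mu)\big)^{\mathrm{GL}_{\mathbf{d}}}$, that is, the statement that regular functions on the affine GIT quotient $\mu^{-1}(0)/\!/\mathrm{GL}_{\mathbf{d}}$ really coincide with the $\mathrm{GL}_{\mathbf{d}}$-invariants of the coordinate ring of $\mu^{-1}(0)$. This is where the flatness hypothesis on $\mu$ enters, and where one leans on the cited work of Crawley-Boevey, Etingof, Ginzburg and of Van den Bergh. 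Once this identification is in place, the argument is a direct chase of definitions and I do not anticipate any further obstacle; in particular the $\mathrm{GL}_{\mathbf{d}}$-invariance of the image of $\mathrm{Tr}$ is immediate from the cyclic symmetry of $\mathrm{trace}$, so no separate computation is needed to see that the restricted trace already lives in the invariant subring.
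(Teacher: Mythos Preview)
Your argument is correct and is essentially the standard one: naturality of $\mathrm{Tr}$ under $R$-algebra maps, applied to the projection $\mathbb{K}\overline{Q}\twoheadrightarrow\Pi Q$, together with the identification $\mathrm{Rep}(\Pi Q,\mathbf{d})=\mu^{-1}(0)$ and $\mathrm{GL}_{\mathbf{d}}$-invariance of traces. The paper itself does not give an independent proof but simply refers the reader to \cite[Theorem 6.4.3]{CBEG2007}; your proposal is precisely the kind of unpacking one would carry out from that reference, so there is no meaningful divergence of approach.

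One small correction: you write that the identification $\mathbb{K}[\mathcal{M}_{\mathbf{d}}(Q)]=\big(\mathbb{K}[T^{\ast}\mathrm{Rep}(Q,\mathbf{d})]/(\mu)\big)^{\mathrm{GL}_{\mathbf{d}}}$ is ``where the flatness hypothesis on $\mu$ enters''. But this proposition carries no flatness hypothesis, and none is needed here. The quiver variety $\mathcal{M}_{\mathbf{d}}(Q)$ is by definition the affine GIT quotient $\mu^{-1}(0)/\!/\mathrm{GL}_{\mathbf{d}}$, and for a reductive group acting on an affine scheme the coordinate ring of the GIT quotient is always the invariant subring; likewise $\mathrm{Rep}(\Pi Q,\mathbf{d})=\mu^{-1}(0)$ holds scheme-theoretically. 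Flatness only becomes relevant later in the paper, for the quantum side (Corollary \ref{coro:filtered quant} and Theorem \ref{theoremofholland}). Otherwise your chase of definitions is complete.
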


\begin{proof}See
\cite[Theoem 6.4.3]{CBEG2007} for a proof.
\end{proof}

At the quantum level, we have a similar result, which is stated as follows.

\begin{theorem}[Quantum reductions commute with 
quantum trace maps] \label{theorem:trace map preserves reduction}
Suppose $Q$ is a finite quiver, $\mathbf{d}$ is a dimension 
vector such that the moment map $\mu$ is flat. Then we have the following commutative diagram 
\begin{equation}\label{eq:reductioncommwithtrace2}\begin{split}
\xymatrix{
\mathbf{N}(Q)_{\hbar} \ar[r]^-{\mathrm{Tr}^{q}} \ar@{-->}[d]
& 	{\mathcal{D}_{\hbar}(\mathrm{Rep}(Q,\mathbf{d}))} \ar@{-->}[d]\\
\mathcal{R}_{q}(\mathbf{N}(Q)_{\hbar}, \widehat{\mathbf{w}}) 
\ar[r]^-{\mathrm{Tr}^{q}}&\displaystyle \frac{ \big(\mathcal{D}_{\hbar}
(\mathrm{Rep}(Q,\mathbf{d})) \big)^{\mathrm{GL}_{\mathbf{d}}}}
{\big( \mathcal{D}_{\hbar}(\mathrm{Rep}(Q,\mathbf{d}))\ (\tau- \hbar \chi_{0}) (\mathfrak{gl}_{\mathbf{d}}) \big)^{\mathrm{GL}_{\mathbf{d}}}}.
	}
\end{split}
\end{equation}
\end{theorem}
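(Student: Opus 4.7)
The plan is to deduce the theorem directly from the two technical results already established, namely Proposition \ref{prop:image q.tr G-inv} and Lemma \ref{lemma: q.trace preserves ideals}, by a universal-property argument. That is, I will show that $\mathrm{Tr}^q:\mathbf{N}(Q)_{\hbar}\to\mathcal{D}_{\hbar}(\mathrm{Rep}(Q,\mathbf{d}))$ automatically factors through the two quotient constructions, so that the induced map on the bottom row is the unique one making the square commute.

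First, I would observe that by Proposition \ref{prop:image q.tr G-inv}, the image of $\mathrm{Tr}^q$ lies in $(\mathcal{D}_{\hbar}(\mathrm{Rep}(Q,\mathbf{d})))^{\mathrm{GL}_{\mathbf{d}}}$. Composing with the canonical projection onto the quotient on the right, we obtain a $\mathbb{K}[\hbar]$-linear map
$$\widetilde{\mathrm{Tr}^q}:\, \mathbf{N}(Q)_{\hbar}\,\longrightarrow\, \frac{(\mathcal{D}_{\hbar}(\mathrm{Rep}(Q,\mathbf{d})))^{\mathrm{GL}_{\mathbf{d}}}}{\bigl(\mathcal{D}_{\hbar}(\mathrm{Rep}(Q,\mathbf{d}))\,(\tau-\hbar\chi_{0})(\mathfrak{gl}_{\mathbf{d}})\bigr)^{\mathrm{GL}_{\mathbf{d}}}}.$$
This map represents the diagonal composite in the square.

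Next, I would verify that $\widetilde{\mathrm{Tr}^q}$ annihilates the two-sided ideal $\mathbf{N}(Q)_{\hbar}\,\widehat{(\mathbb{K}\overline{Q}\mathbf{w})_{\natural}}\,\mathbf{N}(Q)_{\hbar}$ which defines $\mathcal{R}_q(\mathbf{N}(Q)_{\hbar},\widehat{\mathbf{w}})$. This is exactly the content of Lemma \ref{lemma: q.trace preserves ideals}, which sends this ideal into $\bigl(\mathcal{D}_{\hbar}(\mathrm{Rep}(Q,\mathbf{d}))\,(\tau-\hbar\chi_{0})(\mathfrak{gl}_{\mathbf{d}})\bigr)^{\mathrm{GL}_{\mathbf{d}}}$, which is zero after passage to the quotient. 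By the universal property of quotient rings (or of $\mathbb{K}[\hbar]$-modules), $\widetilde{\mathrm{Tr}^q}$ therefore descends uniquely to a well-defined map
$$\mathrm{Tr}^q:\, \mathcal{R}_q(\mathbf{N}(Q)_{\hbar},\widehat{\mathbf{w}})\,\longrightarrow\,\frac{(\mathcal{D}_{\hbar}(\mathrm{Rep}(Q,\mathbf{d})))^{\mathrm{GL}_{\mathbf{d}}}}{\bigl(\mathcal{D}_{\hbar}(\mathrm{Rep}(Q,\mathbf{d}))\,(\tau-\hbar\chi_{0})(\mathfrak{gl}_{\mathbf{d}})\bigr)^{\mathrm{GL}_{\mathbf{d}}}},$$
which is exactly the bottom arrow of the diagram; commutativity of the square is then tautological, since both paths from $\mathbf{N}(Q)_{\hbar}$ to the bottom-right quotient equal $\widetilde{\mathrm{Tr}^q}$ by construction.

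The genuinely non-trivial content has already been carried out in the preceding subsection: identifying the correct character $\chi_{0}$ and checking that the $\hbar$-correction produced when $\mathrm{Tr}^q$ crosses the commutator $\sum_a[a,a^*]$ of the non-commutative moment map is exactly absorbed by the shift $\tau\mapsto\tau-\hbar\chi_0$. Thus the expected obstacle — matching the naive classical moment map on the right with the quantum moment map coming from the non-commutative $\widehat{\mathbf{w}}$ — is precisely resolved by Proposition \ref{proposition:noncom quantum moment map gives quantum moemnt map} and Lemma \ref{lemma: q.trace preserves ideals}, so the proof of the theorem reduces to the formal bookkeeping outlined above.
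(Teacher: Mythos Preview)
Your proposal is correct and follows essentially the same approach as the paper: both arguments rest entirely on Proposition~\ref{prop:image q.tr G-inv} and Lemma~\ref{lemma: q.trace preserves ideals}, and the remaining work is only to check that these two facts force the square to commute. The paper verifies this by an explicit element-by-element computation (writing out $\mathrm{Tr}^q\circ p_1$ and $p_2\circ\mathrm{Tr}^q$ on a generic basis element and comparing), whereas you package the same content as a universal-property argument; your phrasing is arguably cleaner since it makes explicit that the bottom arrow is \emph{defined} as the induced map, something the paper's proof leaves implicit.
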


\begin{proof}
Due to Remark \ref{remark:general form2}, 
without loss of generality, we prove the theorem for  
elements in $\mathbf{N}(Q)_{\hbar}$ like $(x_{1},1)(x_{2},2) \cdots (x_{r},r)$.

In fact, 
$\mathrm{Tr}^{q}\Big( (x_{1},1)(x_{2},2) \cdots (x_{r},r) \Big) 
= tr[x_{1}]tr[x_{2}] \cdots tr[x_{r}]$.
Now, we denote the two projections 
$$\mathbf{N}(Q)_{\hbar} \rightarrow \mathcal{R}_{q}(\mathbf{N}(Q)_{\hbar},\widehat{\mathbf{w}})
\quad\mbox{and}
\quad(\mathcal{D}(Rep(Q,\mathbf{d})))^{\mathrm{GL}_{\mathbf{d}} } 
\rightarrow \frac{(\mathcal{D}_{\hbar}
(\mathrm{Rep}(Q,\mathbf{d})))^{\mathrm{GL}_{\mathbf{d}}}}
{ \big( \mathcal{D}_{\hbar}(\mathrm{Rep}(Q,\mathbf{d}))\ (\tau-\hbar \chi_{0}) (\mathfrak{gl}_{\mathbf{d}})\big)^{\mathrm{GL}_{\mathbf{d}} }}$$ 
by $p_{1}$ and $p_{2}$ respectively.
Then on one hand, 
\begin{align*}
	&\mathrm{Tr^{q}} \circ p_{1} \Big(  (x_{1},1)(x_{2},2) \cdots (x_{r},r) \Big)\\
	&=\mathrm{Tr^{q}} \Big((x_{1},1)(x_{2},2) \cdots (x_{r},r) + \mathbf{N}(Q)_{\hbar}\widehat{(\mathbb{K}\overline{Q}
		\sum_{a\in Q}[a,a^{*}])_{\natural}} \mathbf{N}(Q)_{\hbar}\Big)\\
	&=\Big(\prod_{i} tr[x_{i}]\Big) + \Big( \mathcal{D}_{\hbar}(\mathrm{Rep}(Q,\mathbf{d}))\ (\tau-\hbar \chi_{0}) (\mathfrak{gl}_{\mathbf{d}}) \Big)^{\mathrm{GL}_{\mathbf{d}}}.
\end{align*}
On the other hand,
\begin{align*}
	&p_{2} \circ \mathrm{Tr^{q}} \Big(  (x_{1},1)(x_{2},2) \cdots (x_{r},r) \Big)\\
	&=p_{2} \Big(\prod_{i} tr[x_{i}]\Big)\\
	&=\Big(\prod_{i} tr[x_{i}]\Big) + \Big( \mathcal{D}_{\hbar}(\mathrm{Rep}(Q,\mathbf{d}))\ (\tau-\hbar \chi_{0}) (\mathfrak{gl}_{\mathbf{d}}) \Big)^{\mathrm{GL}_{\mathbf{d}}}.
\end{align*}
These two terms are equal and thus the diagram commutes.
\end{proof}

\subsection{Quantization commutes with trace maps}
In this subsection, we show that the trace map commutes with the quantization.
This is implicit in Schedler's paper \cite{Sch2005}, which we present below
for completeness.

\begin{theorem}[Quantizations commute with trace maps: before reduction]\label{thm:[tr,quant]}
Suppose $Q$ is a finite quiver. Then
for any $x,\ y \in (\mathbb{K}\overline{Q})_{\natural}$, we have
 $$\Phi(\frac{1}{\hbar}\mathrm{Tr}^{q}(\widehat{x} \ast \widehat{y}-\widehat{y} \ast \widehat{x}))
 =\{\mathrm{Tr}(x),\mathrm{Tr}(y)\},$$
where 
$$\Phi: \displaystyle\frac{\mathcal{D}_{\hbar} \big( \mathrm{Rep}(Q,\mathbf{d}) \big)}{\hbar \mathcal{D}_{\hbar} \big( \mathrm{Rep}(Q,\mathbf{d}) \big)} \rightarrow \mathbb{K}[T^{*}\mathrm{Rep}(Q,\mathbf{d})],
[a]_{ij}\, \mapsto\, (a)_{ij},\ [a^{*}]_{ij}\, \mapsto \, (a^{*})_{ij}.$$
In other words, we have the following commutative diagram:
\begin{equation}\label{eq:quantizationcommwithtrace1}
\begin{split}
\xymatrixcolsep{4pc}
\xymatrix{
\mathbf{N}(Q)_{\hbar} \ar[r]^-{\mathrm{Tr}^{q}} & \mathcal{D}_{\hbar}(\mathrm{Rep}(Q,\mathbf{d})) \\
(\mathbb{K}\overline{Q})_{\natural} \ar[r]^-{\mathrm{Tr}} \ar@{~>}[u] & \mathbb{K}[T^{*}\mathrm{Rep}(Q,\mathbf{d})], \ar@{~>}[u]
}
\end{split}\end{equation}
which means the trace maps commute with quantization. 
\end{theorem}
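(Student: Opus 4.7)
The plan is to reduce the statement to a commutator computation in $\mathcal{D}_\hbar(\mathrm{Rep}(Q,\mathbf{d}))$ via Schedler's explicit formula for $\mathrm{Tr}^q$, and then invoke the general principle that the commutator in a graded quantization produces $\hbar$ times the Poisson bracket of the symbols. By $\mathbb{K}$-linearity I reduce to the case where $x=x_{1}\cdots x_{k}$ and $y=y_{1}\cdots y_{l}$ are single cyclic words in $(\mathbb{K}\overline{Q})_{\natural}$.

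The first key step is to establish a multiplicativity property of $\mathrm{Tr}^{q}$ on lifts with separated heights. I would choose representatives so that in $\widehat{x}\ast \widehat{y}$ every height of $\widehat{x}$ lies below every height of $\widehat{y}$, which is exactly the definition of the $\ast$-product. Formula \eqref{quantum trace formula} then gives
\[
\mathrm{Tr}^{q}(\widehat{x}\ast \widehat{y})\;=\;\mathrm{tr}([x_{1}]\cdots [x_{k}])\cdot \mathrm{tr}([y_{1}]\cdots [y_{l}])\;=\;\mathrm{Tr}^{q}(\widehat{x})\cdot \mathrm{Tr}^{q}(\widehat{y}),
\]
the product on the right being that of $\mathcal{D}_{\hbar}(\mathrm{Rep}(Q,\mathbf{d}))$. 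The point is that in formula \eqref{F.1} the two families of summation indices $k_{1,*}$ and $k_{2,*}$ are independent, so the big product of matrix entries factorises into two traces, while the increasing-heights convention dictates that the factor coming from $\widehat{x}$ stands to the left of the one coming from $\widehat{y}$. The same reasoning applied to $\widehat{y}\ast \widehat{x}$ exchanges the two factors, hence
\[
\mathrm{Tr}^{q}(\widehat{x}\ast \widehat{y}-\widehat{y}\ast \widehat{x})\;=\;[\mathrm{Tr}^{q}(\widehat{x}),\mathrm{Tr}^{q}(\widehat{y})]_{\mathcal{D}_{\hbar}}.
\]

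The second step invokes the graded-quantization structure. By Definition \ref{def:gradedquant} and Corollary \ref{coro:filtered quant}, $\mathcal{D}_{\hbar}(\mathrm{Rep}(Q,\mathbf{d}))$ is a graded quantization of $\mathbb{K}[T^{*}\mathrm{Rep}(Q,\mathbf{d})]$, so any commutator in $\mathcal{D}_{\hbar}$ lies in $\hbar \mathcal{D}_{\hbar}$, and the defining property of a graded quantization reads
\[
\Phi\Big(\overline{\tfrac{1}{\hbar}[A,B]}\Big)=\{\Phi(\overline{A}),\Phi(\overline{B})\}\qquad\text{for all } A,B\in \mathcal{D}_{\hbar}.
\]
Applied to $A=\mathrm{Tr}^{q}(\widehat{x})$ and $B=\mathrm{Tr}^{q}(\widehat{y})$, this reduces the theorem to identifying the principal symbols $\Phi(\overline{\mathrm{Tr}^{q}(\widehat{x})})$ and $\Phi(\overline{\mathrm{Tr}^{q}(\widehat{y})})$.

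Finally, I would verify $\Phi(\overline{\mathrm{Tr}^{q}(\widehat{x})})=\mathrm{Tr}(x)$. Since $\mathrm{Tr}^{q}(\widehat{x})=\mathrm{tr}([x_{1}]\cdots [x_{k}])$, and $\Phi$ sends $[a]_{ij}\mapsto (a)_{ij}$ and $[a^{*}]_{ij}=\partial/\partial [a]_{ji}\mapsto (a^{*})_{ij}$, the reductions mod $\hbar$ of all possible quantum re-orderings coincide, collapsing to the classical matrix trace $\mathrm{tr}(\rho(x_{1})\cdots \rho(x_{k}))$, which is $\mathrm{Tr}(x)$ by Definition \ref{definition:trace map}; the same holds for $y$. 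Combining these ingredients yields the asserted identity. The main obstacle is the first step: one needs to justify carefully from the combinatorics of \eqref{F.1} that multiplicativity of $\mathrm{Tr}^{q}$ holds on lifts with fully separated heights and is independent of the choice of admissible representative, so that the passage from the non-commutative product in $\mathbf{N}(Q)_{\hbar}$ to the operator product in $\mathcal{D}_{\hbar}$ is unambiguous.
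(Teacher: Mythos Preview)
Your proof is correct and shares its opening move with the paper: both reduce to single cyclic words and use the multiplicativity $\mathrm{Tr}^{q}(\widehat{x}\ast\widehat{y})=\mathrm{Tr}^{q}(\widehat{x})\,\mathrm{Tr}^{q}(\widehat{y})$ coming from formula \eqref{quantum trace formula}. From that point the two arguments diverge slightly. The paper computes the commutator $[\mathrm{Tr}^{q}(\widehat{x}),\mathrm{Tr}^{q}(\widehat{y})]$ by hand, repeatedly applying the relations of Definition~\ref{def:homogenized DO} (e.g.\ $[x_{r}]_{j_{r-1},i}[y_{1}]_{u,v_{1}}=[y_{1}]_{u,v_{1}}[x_{r}]_{j_{r-1},i}+\hbar\{(x_{r})_{j_{r-1},i},(y_{1})_{u,v_{1}}\}$) to swap factors one by one until the result is visibly $\hbar$ times the expanded Poisson bracket of the two traces. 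You instead package that entire computation into the abstract defining property of a graded quantization (Definition~\ref{def:gradedquant}), and then only need to check that $\Phi(\overline{\mathrm{Tr}^{q}(\widehat{x})})=\mathrm{Tr}(x)$. Your route is cleaner and emphasises that nothing special about quivers is needed beyond identifying the symbols; the paper's route is more explicit and self-contained, making the $\hbar$-expansion visible term by term without invoking the general framework.
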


\begin{proof}
For any $x=x_{1}\cdots x_{r},\ y=y_{1}\cdots y_{s} \in (\mathbb{K}\overline{Q})_{\natural}$, 
without loss of generality, we assume their liftings are $\widehat{x}=(x_{1},1)(x_{2},2)\cdots (x_{r},r),\ 
\widehat{y}=(y_{1},1)\cdots (y_{s},s)$. By \eqref{quantum trace formula}, we have
\begin{align*}
\mathrm{Tr}^{q}(\widehat{x}\widehat{y}-\widehat{y}\widehat{x})
	&=\mathrm{Tr}^{q}(\widehat{x})\mathrm{Tr}^{q}(\widehat{y})
	-\mathrm{Tr}^{q}(y)\mathrm{Tr}^{q}(x)\\
	&=\sum_{\substack{i,j_{1}\cdots \\ u,v_{1},\cdots }} 
	\Big([x_{1}]_{i,j_{1}} [x_{2}]_{j_{1},j_{2}}\cdots [x_{r}]_{j_{r-1},i} [y_{1}]_{u,v_{1}}\cdots 
	[y_{s}]_{v_{s-1},u}\\
	&\quad\quad -  [y_{1}]_{u,v_{1}}\cdots [y_{s}]_{v_{s-1},u}[x_{1}]_{i,j_{1}} 
	[x_{2}]_{j_{1},j_{2}}\cdots [x_{r}]_{j_{r-1},i} \Big). 
\end{align*}
Now, by repeatedly applying the four conditions 
in Definition \ref{def:homogenized DO},
we can switch those terms containing $x$ and those containing 
$y$ in the first summand; for example, in the first step, we plug
$[x_{r}]_{j_{r-1},i}[y_{1}]_{u,v_{1}} = [y_{1}]_{u,v_{1}} [x_{r}]_{j_{r-1},i} + \hbar\{(x_{r})_{j_{r-1},i}, (y)_{u,v_{1}}\}$
into the right hand side of the above equality, we get a switch of positions of
$[x_r]_{j_{r-1}, i}$ and $[y_1]_{u, v_1}$. Eventually, we get
\begin{align*}
	\mathrm{Tr}^{q}(\widehat{x} \ast \widehat{y}-\widehat{y} \ast \widehat{x})
	&=\hbar \sum_{\substack{i,j_{1}\cdots \\ u,v_{1},\cdots }} \Big( \Big\{ (x_{1})_{y,j_{1}},(y_{1})_{u,v_{1}}\Big\} [x_{2}]_{ j_{1},j_{2} } \cdots [x_{r}]_{j_{r-1},i} [y_{2}]_{v_{1},v_{2}} \cdots [y_{s}]_{v_{s-1}, u}\\
	&\quad \quad +\Big\{(x_{2})_{j_{1},j_{2}}, (y)_{u,v_{1}}\Big\} [x_{1}]_{ i,j_{1} } \cdots [x_{r}]_{j_{r-1},i} [y_{2}]_{v_{1},v_{2}} \cdots [y_{s}]_{v_{s-1}, u} + \cdots \Big).
\end{align*}
On the other hand, we have 
\begin{align*}
\{ \mathrm{Tr}(x),\mathrm{Tr}(y)\}
&= \Big\{\sum_{i,j_{1}\cdots }(x_{1})_{i,j_{1}}\cdots (x_{r})_{j_{r-1},i},\ \sum_{u,v_{1},\cdots } 
(y_{1})_{u,v_{1}}\cdots (y_{s})_{v_{r-1},u} \Big\} \\
&=\sum_{\substack{i,j_{1}\cdots \\ u,v_{1},\cdots }} \Big\{ (x_{1})_{i,j_{1}}
\cdots (x_{r})_{j_{r-1},i},\  (y_{1})_{u,v_{1}}\cdots (y_{s})_{v_{r-1},u} \Big\}.
\end{align*}
Therefore,
\[\Phi(\frac{1}{\hbar}\mathrm{Tr}^{q}(\widehat{x} \ast \widehat{y}-\widehat{y} \ast \widehat{x}))
=\{\mathrm{Tr}(x),\mathrm{Tr}(y)\}.\qedhere\]
\end{proof}

In a similar way, we have

\begin{theorem}[Quantizations commute with trace maps: after reduction]
Let $Q$ be a finite quiver and let $\mathbf{d}$ be a dimension vector 
such that the moment map $\mu$ is flat. Then for any $x,\ y \in (\Pi Q)_{\natural}$, 
we have
$$\Phi(\mathrm{Tr}^{q}(\widehat{x} \ast \widehat{y}-\widehat{y} \ast \widehat{x}))
	= \{\mathrm{Tr}(x),\mathrm{Tr}(y)\}.
$$ 
In other words, 
the following diagram
\begin{equation}\label{eq:quantizationcommwithtrace2}
\begin{split}
\xymatrixcolsep{4pc}
\xymatrix{
\mathcal{R}_{q}(\mathbf{N}(Q)_{\hbar},\widehat{\mathbf{w}}) 
\ar[r]^-{\mathrm{Tr}^{q}} & \displaystyle\frac{(\mathcal{D}_{\hbar}
(\mathrm{Rep}(Q,\mathbf{d})))^{\mathrm{GL}_{d}}}{( \mathcal{D}_{\hbar}
(\mathrm{Rep}(Q,\mathbf{d}))\ (\tau-\hbar \chi_{0}) (\mathfrak{gl}_{\mathbf{d}}))^{\mathrm{GL}_{d}}} \\
(\Pi Q)_{\natural} \ar[r]^-{\mathrm{Tr}} \ar@{~>}[u]& 
\mathbb{K}[\mathcal{M}_{\mathbf{d}}(Q)]\ar@{~>}[u]
}
\end{split}
\end{equation} 
commutes,
which means
the trace map commutes with the quantization
for the preprojective algebras.
\end{theorem}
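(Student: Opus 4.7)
The strategy is to bootstrap from the ``before reduction'' version of the statement, namely Theorem \ref{thm:[tr,quant]}, by invoking the compatibilities already established in the preceding subsections. Intuitively, the three commutative squares \eqref{eq:reductioncommwithtrace1}, \eqref{eq:reductioncommwithtrace2} and \eqref{eq:quantizationcommwithtrace1} assemble into a cubical diagram whose remaining face is precisely what we want; the plan is to chase elements through this cube.

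Concretely, given $x,y\in(\Pi Q)_\natural$, first pick lifts $\tilde{x},\tilde{y}\in(\mathbb{K}\overline{Q})_\natural$ along the canonical projection $\mathrm{pr}\colon (\mathbb{K}\overline{Q})_\natural\twoheadrightarrow(\Pi Q)_\natural$. Their liftings $\widehat{\tilde{x}},\widehat{\tilde{y}}\in \mathbf{N}(Q)_\hbar$ represent $\widehat{x},\widehat{y}$ in $\mathcal{R}_q(\mathbf{N}(Q)_\hbar,\widehat{\mathbf{w}})$ by Definition \ref{def:NCquantumred}. Applying Theorem \ref{thm:[tr,quant]} to $\tilde{x},\tilde{y}$ gives
\begin{equation*}
\Phi\bigl(\tfrac{1}{\hbar}\,\mathrm{Tr}^q(\widehat{\tilde{x}}\ast \widehat{\tilde{y}}-\widehat{\tilde{y}}\ast\widehat{\tilde{x}})\bigr)=\{\mathrm{Tr}(\tilde{x}),\mathrm{Tr}(\tilde{y})\}
\end{equation*}
in $\mathbb{K}[T^*\mathrm{Rep}(Q,\mathbf{d})]$.

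Next I would project both sides of this identity to the corresponding reduced quotients. On the left, Theorem \ref{theorem:trace map preserves reduction} ensures that $\mathrm{Tr}^q$ intertwines the vertical reductions, so $\mathrm{Tr}^q(\widehat{\tilde{x}}\ast\widehat{\tilde{y}}-\widehat{\tilde{y}}\ast\widehat{\tilde{x}})$ descends to $\mathrm{Tr}^q(\widehat{x}\ast\widehat{y}-\widehat{y}\ast\widehat{x})$ in the quantum Hamiltonian reduction appearing in the target of the theorem. On the right, \eqref{eq:reductioncommwithtrace1} tells us that $\mathrm{Tr}(\tilde{x})$ and $\mathrm{Tr}(\tilde{y})$ project to $\mathrm{Tr}(x)$ and $\mathrm{Tr}(y)$, and the standard Hamiltonian reduction guarantees that the Poisson bracket on $\mathbb{K}[T^*\mathrm{Rep}(Q,\mathbf{d})]$ descends to the one on $\mathbb{K}[\mathcal{M}_\mathbf{d}(Q)]$. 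Reading off the reduced identity yields the asserted commutativity of the quantization-trace diagram for preprojective algebras.

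The main obstacle is verifying that the descent is coherent: one must ensure both that the choice of lifts $\tilde{x},\tilde{y}$ does not change the reduced answer, and that the quantum and classical ideals match correctly under $\mathrm{Tr}^q$. For the first, any two lifts differ by an element of $(\mathbb{K}\overline{Q}\mathbf{w})_\natural$; Lemma \ref{lemma: w is stable} together with Theorem \ref{theorem:R(A) quantizes preproj algebra} shows that such differences are annihilated in the quantum reduction. For the second, Lemma \ref{lemma: q.trace preserves ideals} pins down the character $\chi_0$ uniquely and handles the matching of moment-map ideals, while the flatness hypothesis on $\mu$ (entering through Corollary \ref{coro:filtered quant}) guarantees that the filtered and graded quantization pictures align so that no spurious $\hbar$-corrections appear when passing to the reduced identity.
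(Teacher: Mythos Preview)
Your proposal is correct and follows essentially the same approach as the paper: both arguments lift to the unreduced setting, invoke the ``before reduction'' identity (Theorem~\ref{thm:[tr,quant]}), and then project to the reduced quotients using Lemma~\ref{lemma: q.trace preserves ideals} and the fact that the Poisson bracket descends along the canonical projection $\pi$. The only difference is presentational: the paper re-expands the commutator of quantum traces explicitly in the quotient (writing the same sum of contraction terms as in Theorem~\ref{thm:[tr,quant]}, but with equivalence-class brackets $\langle\,\cdot\,\rangle$) before appealing to the compatibility of $\pi$ with Poisson brackets, whereas you package the same step as a direct application of Theorem~\ref{thm:[tr,quant]} followed by a diagram chase through \eqref{eq:reductioncommwithtrace1} and \eqref{eq:reductioncommwithtrace2}.
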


\begin{proof}
Without loss of generality, we prove this theorem for arbitrary elements 
$$(x_{1},1)(x_{2},2)\cdots (x_{r},r) + L_{\hbar}$$
and
$$(y_{1},1)(y_{2},2)\cdots (y_{s},s) + L_{\hbar}$$
in $\mathcal{R}_{q}(\mathbf{N}(Q)_{\hbar}, \widehat{\mathbf{w}})$,
where $x:= x_{1}x_{2}\cdots x_{r}$, $y:= y_{1}y_{2}\cdots y_{s}$ are cyclic paths in 
$\overline Q$ and 
$L_{\hbar}:=\mathbf{N}(Q)_{\hbar}
\widehat{(\mathbb{K}\overline{Q}\mathbf{w})_{\natural}}\mathbf{N}(Q)_{\hbar}$. 
Here we denote the equivalent classes of $x, y$ in 
$(\Pi Q)_{\natural}$ by $x+ \ker pr,\ y + \ker pr$ respectively.

In fact, by Lemma \ref{lemma: q.trace preserves ideals}, 
$$\mathrm{Tr}^{q} \Big((x_{1},1)(x_{2},2)\cdots (x_{r},r) 
+ L_{\hbar} \Big) =\big\langle tr(\prod_{i=1}^{r}  [x_{i}])\big\rangle ,$$ 
where $\big\langle tr(\prod_{i=1}^{r}  [x_{i}]) \big\rangle$ is the element in 
$\displaystyle\frac{(\mathcal{D}_{\hbar}
(\mathrm{Rep}(Q,\mathbf{d})))^{\mathrm{GL}_{d}}}{( \mathcal{D}_{\hbar}
(\mathrm{Rep}(Q,\mathbf{d}))\ (\tau-\hbar \chi_{0}) (\mathfrak{gl}_{\mathbf{d}}))^{\mathrm{GL}_{d}}}$
represented by the differential operator $tr(\prod_{i=1}^{r}  [x_{i}])$. 
Similarly, we have 
$$\mathrm{Tr}^{q} \Big((y_{1},1)(y_{2},2)\cdots (y_{s},s) 
+ L_{\hbar} \Big) = \big\langle tr(\prod_{i=1}^{s}  [y_{i}]) \big\rangle.$$
By Proposition \ref{prop:image q.tr G-inv}, 
\begin{align*}
&\big\langle tr(\prod_{i=1}^{r}  [x_{i}]) \big\rangle \big\langle tr(\prod_{i=1}^{s}  
[y_{i}]) \big\rangle - \big\langle tr(\prod_{i=1}^{s}  [y_{i}]) 
\big\rangle \big\langle tr(\prod_{i=1}^{r}  [x_{i}]) \big\rangle \\
&=\big\langle tr(\prod_{i=1}^{r}  [x_{i}])tr(\prod_{i=1}^{s}  [y_{i}]) 
- tr(\prod_{i=1}^{s}  [y_{i}])tr(\prod_{i=1}^{r}  [x_{i}]) \big\rangle\\
&=\hbar \big\langle \sum_{\substack{i,j_{1}\cdots \\ u,v_{1},\cdots }} 
\Big( \Big\{ (x_{1})_{y,j_{1}},(y_{1})_{u,v_{1}}\Big\} [x_{2}]_{ j_{1},j_{2} } 
\cdots [x_{r}]_{j_{r-1},i} [y_{2}]_{v_{1},v_{2}} \cdots [y_{s}]_{v_{s-1}, u}\\
&\quad \quad +\Big\{(x_{2})_{j_{1},j_{2}}, (y)_{u,v_{1}}\Big\} [x_{1}]_{ i,j_{1} } 
\cdots [x_{r}]_{j_{r-1},i} [y_{2}]_{v_{1},v_{2}} \cdots [y_{s}]_{v_{s-1}, u} + \cdots \Big) \big\rangle\\
&=\hbar  \sum_{\substack{i,j_{1}\cdots \\ u,v_{1},\cdots }} 
\Big( \Big\{ (x_{1})_{y,j_{1}},(y_{1})_{u,v_{1}}\Big\} \big\langle 
[x_{2}]_{ j_{1},j_{2} }\big\rangle \cdots \big\langle [x_{r}]_{j_{r-1},i} 
\big\rangle \big\langle [y_{2}]_{v_{1},v_{2}} \big\rangle \cdots 
\big\langle [y_{s}]_{v_{s-1}, u}\big\rangle \\
&\quad \quad +\Big\{(x_{2})_{j_{1},j_{2}}, (y)_{u,v_{1}}\Big\} \big\langle 
[x_{1}]_{ i,j_{1} } \big\rangle \cdots \big\langle [x_{r}]_{j_{r-1},i} \big\rangle 
\big\langle [y_{2}]_{v_{1},v_{2}} \big\rangle \cdots \big\langle 
[y_{s}]_{v_{s-1}, u} \big\rangle + \cdots \Big).
\end{align*}
Moreover, since the canonical projection
$$\pi:\big(\mathbb{K}[T^{*}\mathrm{Rep}(Q,\mathbf{d})]\big)^{\mathrm{GL}_{\mathbf{d}}} 
\rightarrow \mathbb{K}[\mathcal{M}_{\mathbf{d}}(Q)]$$
preserves Poisson brackets, we have
\begin{align*}
	\Big\{ \mathrm{Tr}(x + \ker pr), \mathrm{Tr}(y + \ker pr) \Big\} 
	&= \Big\{ \mathrm{Tr} \circ pr(x),\mathrm{Tr} \circ pr(y) \Big\} \\
	&=\Big\{ \pi \circ \mathrm{Tr}(x),\pi \circ \mathrm{Tr}(y) \Big\}\\
	&=\pi \Big(\Big\{ \mathrm{Tr}(x), \mathrm{Tr}(y) \Big\} \Big).
\end{align*}
By the commutivity of quantization and reduction on $T^{*}\mathrm{Rep(Q,\mathbf{d})}$,
the theorem is now proved.
\end{proof}

\subsection{Proof of Theorem \ref{maintheorem}}

Now we are ready to prove our main theorem.

\begin{proof}[Proof of Theorem \ref{maintheorem}]
Theorem \ref{maintheorem} (1) is exactly Theorem \ref{thm:NCQR} and
Theorem \ref{maintheorem} (2) is exactly Theorem \ref{theorem:trace map preserves reduction}.
\end{proof}

\begin{proof}[Proof of the commutativity of Diagram \eqref{maincor}]
The bottom and top diagrams are given by 
\eqref{eq:reductioncommwithtrace1}
and \eqref{eq:reductioncommwithtrace2} respectively.
The front and back diagrams are given by
\eqref{eq:quantizationcommwithtrace1}
and \eqref{eq:quantizationcommwithtrace2} respectively,
and the left and right diagrams are given by
\eqref{eq:NCquantizationcommwithreduction}
and \eqref{eq:quantizationcommwithreduction} respectively.
\end{proof}

\begin{remark} 
In \cite{AKSM2002Quasi,AMM1998Lie}, Alekseev, Kosmann-Schwarzbach,
Malkin and Meinrenken introduced the notion of 
quasi-hamiltonian reduction for a symplectic space 
with a group-valued moment map. Furthermore, the 
(geometric) quantization commutes with the quasi-Hamiltonian 
reduction for these spaces (see \cite[Section 4.3]{Mei2012}). 
In \cite{Van2008Double,Van2008Quasi}, 
Van den Bergh showed there 
is a non-commutative version of quasi-hamiltonian reduction for 
bi-symplectic spaces (or more generally, for double Poisson spaces). 
It is very plausible that our result in this paper remains valid 
for quasi-hamiltonian reductions; that is, there exists a commutativity 
of the non-commutative quantization and the non-commutative 
quasi-hamiltonian reduction, at least for quiver algebras, 
which, via the trace maps, induces the commutativity 
of the quantization and the quasi-hamiltonian reduction 
on their representation spaces. As a potential application, 
we get a quantization of the character varieties 
from non-commutative geometry. 
We hope to address this problem in a future work.
\end{remark}


\end{document}